\newtheorem{theorem}{Theorem}[section]
\newtheorem{proposition}[theorem]{Proposition}
\newtheorem{lemma}[theorem]{Lemma}
\newtheorem{remark}[theorem]{Remark}
\begin{document}
\baselineskip=15.5pt
\title[Cohomology of flag bundles and Picard groups]{Cohomology of flag bunldes over compact Hermitian locally symmetric spaces}
\author[P. Biswas]{Pritthijit Biswas} 
\address{Chennai Mathematical Institute, H1 Sipcot IT Park, Siruseri, Kelambakkam 603103. Tamil Nadu, India.}
\email{pritthijit@cmi.ac.in\\
pritthibis06@gmail.com}
\author[P. Sankaran]{Parameswaran Sankaran}
\address{Chennai Mathematical Institute, H1 Sipcot IT Park, Siruseri, Kelambakkam 603103. Tamil Nadu, India.}
\email{sankaran@cmi.ac.in}
\subjclass[2010]{14M15; 17B20; 32M10; 32M15; 32Q15}
\keywords{Dolbeault cohomology, Picard group, semisimple Lie groups, Hermitian symmetric spaces, flag bundles} 
\thispagestyle{empty}
\date{}
\begin{abstract}
Let $E\to B$ be a complex analytic fiber bundle with fiber $F$, a flag variety over a compact complex manifold $B$.  We shall obtain a description 
of the cohomology of $E$ when $B=X_\Gamma:=\Gamma\backslash X, E=Y_\Gamma:=\Gamma\backslash Y$ and $F=K/H$, a flag variety, where $Y=G/H$ and $X=G/K$, a Hermitian globally symmetric space of non-compact type with $G$ being a real, connected, non-compact, semisimple linear Lie group with no compact factors and simply connected complexification,
$K\subset G$, a maximal compact subgroup, $H=Z_K(S)$, the centralizer in $K$ of a toral subgroup $S\subseteq K$ containing $Z(K)$, the center of $K$ and  
$\Gamma\subset G$, a uniform and torsionless lattice in $G$. We also obtain a description of  
the Picard group of $Y_\Gamma$ and $X_{\Gamma}$, for which the complexification of $G$ need not be simply connected.
Moreover when $G$ is simple, we obtain the values of $ q$ for which $H^{p,q}(X_\Gamma)$ 
vanishes when $p=0,1$.   
This extends the results of R. Parthasarathy from $1980$, who considered 
(partially) the case $p=0$. 
\end{abstract}
\maketitle
\section{Introduction} 
Let $G$ be a linear, connected, non-compact and real semisimple Lie group with no compact factors. Let $\mathfrak{g}_{0}$ be the Lie algebra of $G$, $\theta$ be a Cartan involution of $\mathfrak{g}_{0}$ and $\mathfrak{g}_{0}=\mathfrak{k}_{0}\oplus \mathfrak{p}_{0}$ be the associated Cartan decomposition. One has an involutive automorphism $\Theta$ of $G$ such that $\theta=d\Theta$. The fixed subgroup of $\Theta$ is a maximal compact subgroup $K$ of 
$G$ with Lie algebra $\mathfrak k_0.$  We shall denote by $\mathfrak g$, $\mathfrak k$ and $\mathfrak p$, the complexifications of $\mathfrak g_0$,$\mathfrak k_0$ and $\mathfrak p_0$ respectively and by $G_\mathbb C$ and $K_\mathbb C$, the complexifications 
of $G$ and $K$ respectively.
We assume that the symmetric space $X=G/K$
is Hermitian.  So we have a decomposition of $\mathfrak p$ into a direct sum 
$\mathfrak p=\mathfrak p^+\oplus \mathfrak p^-$ into the holomorphic and anti-holomorphic tangent spaces at the identity coset $eK$ in $G/K$.  
Note that $\mathfrak p^+$ and $\mathfrak p^-$ are $K$-representation spaces and are abelian 
subalgebras of $\mathfrak g$.  
Let $\Gamma$ be a torsion free and uniform lattice in $G$. 
We denote the double coset space $\Gamma\backslash G/K=\Gamma\backslash X$ by $X_{\Gamma}$. It is a well known result due to Kodaira \cite[Theorem (6)]{Kodaira} that $X_{\Gamma}$ is a complex projective manifold.  
Since $X$ is contractible, $X_\Gamma$ is an Eilenberg-MacLane complex 
$K(\Gamma,1)$.
Let $T\subseteq K$ be a maximal torus of $K$ and let $S\subseteq T$ be a subtorus. We will assume that $Z(K)$, the center of $K$, is contained in $S$. 
Let $H$ be the centralizer $Z_K(S)$ of $S$ in $K$.  A theorem of Borel \cite{borel-ics}
shows that $K/H$ admits a structure of homogeneous complex projective manifold.  In fact $K/H$ is a generalized complex flag manifold $K_\mathbb C/Q$ for a suitable parabolic subgroup $Q$ of $K_\mathbb{C}$, where $K_\mathbb C$ being the complexification of $K$ is complex reductive. The homogeneous space 
$G/H$ has a natural structure of a complex manifold which is $G$-invariant (for the left $G$-action) and is in fact the total space of a holomorphic $K/H$-bundle with holomorphic projection $G/H\to G/K$.  We mod out by the restricted 
$\Gamma$-action to obtain a holomorphic $K/H$-bundle $\pi: Y_\Gamma\to X_\Gamma$ 
where $Y_\Gamma=\Gamma \backslash G/H$.  Moreover in Theorem $(\ref{ygamma-is-projective})$, we conclude that $Y_\Gamma$ is a {\it complex projective manifold}. 
The purpose of this article is to study the topology and geometry of $Y_\Gamma.$
More precisely, 
we show that any complex vector bundle over $K/H\cong K_\mathbb{C}/Q$ arising from a complex representation of $H$ is in fact holomorphic and can 
be extended to a holomorphic vector bundle over $Y_\Gamma$, see Proposition $(\ref{algebraic-vector-bundles})$. 
As a consequence 
we show that when $G_{\mathbb{C}}$ is simply connected, the fiber inclusion into $Y_\Gamma$ of the bundle $\xi_{\Gamma}:=(Y_\Gamma, X_\Gamma,\pi, K/H)$, over any base point in $X_{\Gamma}$ induces a surjection in topological complex $K$-rings and rational cohomology, whereas the fiber inclusion into $Y_{\Gamma}$ over the identity double coset in $X_{\Gamma}$ induces a surjection of the Grothendieck $K$-rings. See Theorem $(\ref{fiber-restriction-is-surjectivein-ktheory})$ and Proposition $(\ref{leray-hirsch})$.  
By the Leray-Hirsch theorem, this leads to a description of the rational cohomology of $Y_\Gamma$ as 
a module over $H^*(X_\Gamma,\mathbb Q)$.   
As an application, we obtain, in Theorem $(\ref{hodge-decomposition})$, 
the following description of the Dolbeault cohomology of $Y_\Gamma$: 
\begin{center} 
$H^{p,q}(Y_\Gamma)\cong \bigoplus_{r\ge 0} H^{p-r,q-r}(X_\Gamma)\otimes H^{r,r}(K/H).$
\end{center}
Moreover we obtain a description of the Picard group of $Y_\Gamma$ in Theorem $(\ref{picard-ygamma})$, which does not require $G_{\mathbb{C}}$ to be simply connected, and states that under the assumption $H^{0,2}(X_{\Gamma})=0$ (when $\Gamma$ is irreducible, this assumption is closely related to the real rank of $G$, namely it is satisfied if real rank of $G$ is at least $3$), there is a split short exact sequence:
\begin{center}
$0\rightarrow Pic(X_{\Gamma})\overset{\pi^{*}}{\rightarrow}Pic(Y_{\Gamma})\overset{\iota^{*}}{\rightarrow}Pic(K/H)\rightarrow 0.$
\end{center}
We also provide a description of the Picard group of $X_\Gamma$, without the assumption that $G_{\mathbb{C}}$ is simply connected in Theorem $(\ref{picard-xgamma})$.  
This is perhaps well-known to the experts but we have included the details since we could not find a reference. 

Assuming that $G$ is simple, we obtain several vanishing results for $H^{p,q}(X_\Gamma)$ when $p=0,1,$ 
using the Matsushima isomorphism which expresses $H^*(X_\Gamma,\mathbb{C})$ in terms of the relative Lie 
algebra cohomology $H^*({\mathfrak g}_{0},K,A_{\mathfrak q})$ where $A_{\mathfrak q}$ denotes the 
smooth and $K$-finite vectors of the irreducible unitary representation of $G$ corresponding to a $\theta$-stable parabolic subalgebra $\mathfrak{q}$ of $\mathfrak{g}_{0}$, see the equations $(\ref{matsushima-isomorphism})$, $(\ref{hodgematsushima})$, $(\ref{gkcohomology-hodgetype})$, $(\ref{gk-cohomology-hermitiansymmetricspace})$ and the discussions in \S 3. The vanishing results for $H^{p,q}(X_{\Gamma})$ when $p=0$, which had been stated (without proof) for simple $G$ by R. Parthasarathy in \cite[\S6]{parthasarathy-1980}, have been extended in this article whereas the vanishing results for $H^{p,q}(X_{\Gamma})$ when $p=1$ seems to be new. Our vanishing results appear in \S 3, see Theorem $(\ref{vanishing})$. For simple $G$ with real rank at least $2$, Theorem $(\ref{hodge-1-1})$ in \S 3 shows that $rank(Pic(X_{\Gamma}))=1$, for all the Cartan types except possibly {\bf BDI}.
\section{Preliminaries and the main theorems}
{\bf Notations:} {\em Throughout this note, unless otherwise stated, $G$ denotes a linear, connected, non-compact and real semisimple Lie group with no compact factors, $K\subset G$ denotes a maximal compact subgroup of $G$ corresponding to the fixed point subgroup of a Cartan involution of $G$ and $G/K$ which is denoted by $X$ is assumed to be Hermitian symmetric.  We denote by 
$\Gamma\subset G$ a torsion free and uniform lattice in $G$ and by $X_\Gamma$  the locally Hermitian symmetric space 
$\Gamma\backslash X.$  If $H$ is any real Lie group, we denote by $H_\mathbb{C}$ its complexification. Moreover the real rank of $G$ is denoted by $r_{\mathbb{R}}(G)$}.  
\subsection{Cohomology of Lattices:} We shall briefly recall the relevant results on the cohomology of the compact locally Hermitian symmetric space $X_\Gamma$, where $\Gamma\subset G$ is any torsion free and uniform lattice. 
Since $X=G/K$ is contractible, $X_\Gamma$ is an Eilenberg-MacLane space $K(\Gamma,1)$ and so $H^*(X_\Gamma,\mathbb C)$ is the same as the group cohomology $H^*(\Gamma,\mathbb C).$   

Fix a measure $\mu$ on $\Gamma\backslash G$ which is invariant under the right translations by elements of $G,$ arising from a Haar measure on $G$.  One has $L^2(\Gamma\backslash G,\mu)$ as a natural unitary representation of $G$. 
  By a result of I. M. Gelfand and I. I. Piatetsky-Shapiro \cite{GGP}, $L^2(\Gamma\backslash G,\mu)$ decomposes as a discrete Hilbert space direct sum of 
irreducible unitary representations $(\pi,V_\pi)$ of $G$ each of 
which occurs with {\em finite}
multiplicity in $L^2(\Gamma\backslash G,\mu)$.  That is,
\[L^2(\Gamma\backslash G,\mu)=\widehat{\bigoplus}m(\pi,\Gamma)V_\pi.\]
where the sum is over the set $\widehat{G}$ of all (isomorphism classes) of irreducible unitary representations of $G$ and $m(\pi,\Gamma)$ denotes the (finite) multiplicity (possibly zero) with which $\pi$ occurs 
in $L^2(\Gamma\backslash G,\mu)$.  Those repesentations $\pi$ for which $m(\pi,\Gamma)>0$ are called {\em automorphic repesentations} of $\Gamma$.

Y. Matsushima \cite{matsushima} has shown that: 
\begin{equation}\label{matsushima-isomorphism}
 H^*(X_\Gamma,\mathbb C) \cong \bigoplus_{\pi\in\widehat{G}} m(\pi,\Gamma)H^*(\mathfrak g_0,K,V^\infty_{\pi,K}).
\end{equation}
Here for every $\pi\in\widehat{G}$,  $V^{\infty}_{\pi,K}$ denotes the smooth and $K$-finite vectors of $V_{\pi}$. Since $X=G/K$ is Hermitian symmetric, a classical theorem of Kodaira shows that 
$X_\Gamma$ is a complex projective manifold.  Therefore one has the Hodge decomposition $H^r(X_\Gamma;\mathbb C)\cong \bigoplus_{p+q=r}H^{p,q}(X_\Gamma)
=H^q(X_\Gamma,\Omega^p).$  It turns out, in this case, that there is a Hodge decomposition 
of the $(\mathfrak g_0,K)$-cohomology: $H^r(\mathfrak g_0,K,V^\infty_{\pi,K})
=\bigoplus_{p+q=r}H^{p,q}(\mathfrak g_0,K,V^{\infty}_{\pi,K})$.  Moreover for every $p,q\geq 0$, the Matsushima isomorphism in $(\ref{matsushima-isomorphism})$ restricts to an isomorphism:
\begin{equation}\label{hodgematsushima}
    H^{p,q}(X_\Gamma)\cong \bigoplus_{\pi\in\widehat{G}}m(\pi,\Gamma)H^{p,q}(\mathfrak g_0,K,V^\infty_{\pi,K}).
    \end{equation}
We refer the reader to \cite[Ch.I, Ch. II, \S1,\S2 and \S4]{borel-wallach}. 

Let $X_{\textrm{u}}$ be the compact dual of $X.$  Thus $X_{\textrm{u}}\cong U/K$ is a generalized complex flag manifold, where $U\subset G_\mathbb C$ is a maximal compact subgroup that contains 
$K.$  Then $H^*(X_{\textrm{u}},\mathbb C)$ is isomorphic to $H^{*}(\mathfrak g_0,K,\mathbb C)$ and moreover if $p,q\geq 0$ then $H^{p,q}(X_{\textrm{u}})=0$ unless $p=q$ and if $p\geq 0$ then $H^{p,p}(X_{\textrm{u}})\cong H^{p,p}(\mathfrak g_0,K,\mathbb C)$. For any $p\geq 0$,   one has the Matsushima homomorphism: 
\begin{equation}\label{matsushima-homomorphism}
   {j}^*:  H^{p,p}(X_{\textrm{u}})\to H^{p,p}(X_\Gamma).
\end{equation}
on identifying $H^{p,p}(X_{\textrm{u}})$ with the summand corresponding to the 
trivial (one-dimensional) representation under the Matsushima isomorphism (\ref{matsushima-isomorphism}).  
\subsection{$\theta$-stable parabolic subalgebras:}
The automorphic representations for which the modules $H^*(\mathfrak g_0,K,V_{\pi,K}^\infty)\ne 0$ 
are called the {\em cohomological representations} (for $\Gamma$).  Note that 
since $X_\Gamma$ is compact, there are only finitely many cohomological representations. Indeed, there are only finitely many irreducible unitary representations $(\pi,V_\pi)$
of $G$ with non-vanishing $H^*(\mathfrak g_0,K,V^\infty_{\pi,K}).$
It is known that they all arise from $\theta$-{\em stable parabolic subalgebras of 
$\mathfrak g_0$}.  

The $\theta$-stable parabolic subalgebras of $\mathfrak g_0$ are defined as follows. 
 Recall that $\mathfrak g_0$ is a real form of $\mathfrak g$, that is $\mathfrak g=\mathfrak g_0\otimes \mathbb C=\mathfrak g_0\oplus i\mathfrak g_0.$   This yields an involutive conjugate complex linear automorphism $\bar{}:\mathfrak g\to \mathfrak g.$  We also have 
the involution $\theta:\mathfrak g\to \mathfrak g$, which is 
the complex linear extension of the Cartan involution of $\mathfrak g_0$ that fixes $\mathfrak k_0$.  A parabolic subalgebra 
$\mathfrak q\subseteq \mathfrak g$ is called a $\theta$-{\it stable parabolic subalgebra} 
of $\mathfrak g_0$ if the following conditions are satisfied:
$(a)$  $\theta(\mathfrak q)=\mathfrak q,$  $(b)$  $\mathfrak l:=\mathfrak{q}\cap \bar{\mathfrak q}$ is a Levi subalgebra of $\mathfrak q.$ If $\mathfrak{q}$ is $\theta$-stable parabolic subalgebra of $\mathfrak{g}_{0}$, then it is well known that  $\mathfrak l=\mathfrak l_0\otimes \mathbb C$ where $\mathfrak l_0=\mathfrak l\cap \mathfrak g_0.$
The subgroup $L=\{g\in G\mid Ad(g)(\mathfrak q)=\mathfrak q\}$ is a connected closed subgroup of $G$ with $Lie(L)=\mathfrak l_0.$
We have the decomposition $\mathfrak q=\mathfrak l\oplus \mathfrak u$ where $u$ is the nil-radical of $\mathfrak q.$ It turns out that there are only finitely many $\theta$-stable parabolic subalgebras of $\mathfrak{g}_{0}$ up to $K$-conjugacy.
\subsection{Invariant complex structures on $G/H$:}\label{2.3}
Let the maximal compact $K$ of $G$ be the fixed point subgroup of a Cartan involution $\Theta:G\rightarrow G$ with $d\Theta=\theta$, a Cartan involution of $\mathfrak{g}_{0}$ giving the Cartan decomposition $\mathfrak{g}_{0}=\mathfrak{k}_{0}\oplus \mathfrak{p}_{0}$, with $\mathfrak{k}_{0}$ being the Lie algebra of $K$.
Since $X$ is Hermitian symmetric, we have a $G$-invariant complex structure 
on $X$ where the complex structure $J$ on $\mathfrak p_0$, the tangent space at the 
identity coset of $X$, equals 
$ad(X_0)$ for a suitable $X_0$ in the center $\mathfrak z(\mathfrak k_0)$ of $\mathfrak k_0$.  One has a decomposition 
$\mathfrak p=\mathfrak p^+\oplus \mathfrak p^-$ where $\mathfrak p^+,\mathfrak p^-$
are the $(+i)$- and $(-i)$-eigen spaces of $J$.  Since $X_0\in \mathfrak z(\mathfrak k_0),$ 
$\mathfrak p^+,\mathfrak p^-$ are $K$-representations and so $[\mathfrak k,\mathfrak p^\pm]\subset \mathfrak p^\pm$.  
Moreover, $[\mathfrak p^+,\mathfrak p^+]=0=[\mathfrak p^-,\mathfrak p^-]$ and $[\mathfrak p,\mathfrak p]\subseteq \mathfrak k.$   Also $\mathfrak u_0:=\mathfrak k_0\oplus i\mathfrak p_0$ is a compact Lie algebra.  One has a Lie group $U$ with Lie algebra $\mathfrak u_0$ that contains $K$.  The restriction of the involution $\theta\otimes \mathbb C$
on $\mathfrak g$ to $\mathfrak u_0$ fixes $\mathfrak k_0$.  The homogeneous space 
$X_\mathrm{u}:=U/K$ is a Hermitian symmetric space of compact type, which is  
the {\em compact dual} of $X$ and is a generalized complex flag manifold.

Let $S\subseteq T$ be a subtorus of $T$ containing $Z(K)$, the center of $K$, where $T\subseteq K$ is a maximal torus of $K$. Let $\mathfrak{s}_{0}$ and $\mathfrak{t}_{0}$ denote the Lie algebras of $S$ and $T$ and their complexifications are denoted by $\mathfrak{s}$ and $\mathfrak{t}$ respectively. Note that $\mathfrak{t}$ is a Cartan subalgebra of $\mathfrak{g}$.  
Let $H=Z_K(S),$ the centralizer of $S$ in $K$ and we will denote by $\mathfrak{h}_{0}$ the Lie algebra of $H$ and its complexification by $\mathfrak{h}$.  
It is known that $K/H$ admits a $K$-invariant complex structure. (See \cite{borel-ics}).
In fact, 
$K/H$ is biholomorphic to a generalized complex flag manifold $K_\mathbb C/Q$ for a suitable parabolic subgroup $Q$ of $K_{\mathbb{C}}$ whose Levi part is $H_\mathbb C$.  
Let $Y=G/H$ and if $\Gamma$ is a torsion free and uniform lattice in $G$, then we will denote by $Y_{\Gamma}$ the space $\Gamma\backslash Y$.   

The following theorem is well-known from the work of Borel \cite{borel-ics} who classified all homogeneous spaces which admit invariant complex structures. See 
also \cite{wang} and \cite{nishiyama}.
\begin{theorem} \label{borel-noncompactbundle}
Let $H\subseteq K\subseteq G$ be as above.\\ 
$(i)$  The homogeneous space $G/H$ admits a $G$-invariant Kähler structure such that 
    the natural quotient map $G/H\to G/K$ is the projection of a holomorphic bundle with fiber being biholomorphic to the generalized complex flag manifold $K/H$.   \\
    $(ii)$  If $\Gamma\subset G$ is a torsion free and uniform 
    lattice in $G$, then $Y_\Gamma$ admits the structure of a  Kähler manifold 
    such that the natural quotient 
    $Y_\Gamma\to X_\Gamma$ is a holomorphic $K/H$-bundle. 
 \end{theorem}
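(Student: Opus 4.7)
The theorem asserts both the existence of a $G$-invariant Kähler structure on $G/H$ making the projection $G/H \to G/K$ a holomorphic fiber bundle with fiber $K/H$, and the corresponding statement for the lattice quotient $Y_\Gamma \to X_\Gamma$. My plan for part (i) is to construct the $G$-invariant complex structure on $G/H$ algebraically via its $(+i)$-eigenspace at the identity coset, verify integrability by the subalgebra criterion, deduce the holomorphic bundle structure, and then exhibit a compatible $G$-invariant Kähler form. Part (ii) will follow by descent under the $\Gamma$-action.

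For the complex structure, the tangent space $\mathfrak{g}_0/\mathfrak{h}_0$ at $eH$ decomposes as $\mathfrak{p}_0 \oplus (\mathfrak{k}_0/\mathfrak{h}_0)$; complexifying, I set the $(+i)$-eigenspace of $J$ to be $\mathfrak{m}^+ := \mathfrak{p}^+ \oplus \mathfrak{u}_Q \subset \mathfrak{g}/\mathfrak{h}$, where $\mathfrak{u}_Q$ is the nilpotent radical of the parabolic subalgebra $\mathfrak{q} \subset \mathfrak{k}$ with Levi $\mathfrak{h}$ (so $K/H \cong K_\mathbb{C}/Q$). Both summands are $Ad(H)$-stable---$\mathfrak{p}^+$ because it is $K$-stable, and $\mathfrak{u}_Q$ because $H \subset Q$---so $J$ extends to a $G$-invariant almost complex structure on $G/H$. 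Integrability (Newlander--Nirenberg) reduces to checking that $\mathfrak{h} \oplus \mathfrak{m}^+ = \mathfrak{q} \oplus \mathfrak{p}^+$ is a complex Lie subalgebra of $\mathfrak{g}$, which follows from three ingredients already noted in \S\ref{2.3}: $\mathfrak{q}$ is itself a subalgebra, $[\mathfrak{p}^+, \mathfrak{p}^+] = 0$, and $[\mathfrak{k}, \mathfrak{p}^+] \subseteq \mathfrak{p}^+$. For the bundle structure, $G/H \to G/K$ is the associated bundle of the principal $K$-bundle $G \to G/K$ with fiber $K/H$, hence locally trivial; holomorphicity reduces by $G$-equivariance to the infinitesimal check that the differential at $eH$ sends $\mathfrak{m}^+$ onto $\mathfrak{p}^+$ with kernel $\mathfrak{u}_Q$, matching the $(+i)$-eigenspaces on base and fiber.

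The Kähler condition is the main obstacle. My preferred route is to appeal directly to Borel's classification \cite{borel-ics} of homogeneous Kähler manifolds for semisimple $G$ (see also \cite{wang}, \cite{nishiyama}): since $Z(K) \subseteq S$, one has $H = Z_K(S) = Z_G(S)$, the last equality following because the $\mathfrak{p}_0$-component of any element of $\mathfrak{g}_0$ centralizing $\mathfrak{s}_0$ must vanish (as $ad(X_0) = J$ has no nonzero real eigenvectors on $\mathfrak{p}_0$ and $X_0 \in \mathfrak{z}(\mathfrak{k}_0) \subseteq \mathfrak{s}_0$). Thus $G/H$ falls squarely under Borel's setting, giving a $G$-invariant Kähler metric compatible with $J$. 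As a hands-on alternative kept in reserve, one could define $\omega_0(X,Y) = \langle JX, Y\rangle$ on $\mathfrak{g}_0/\mathfrak{h}_0$ from the positive-definite $H$-invariant inner product $\langle X, Y\rangle := -B(X,\theta Y)$, extend $G$-invariantly to $\omega$ on $G/H$, and verify $d\omega = 0$ via a bracket calculation using $[\mathfrak{p}, \mathfrak{p}] \subseteq \mathfrak{k}$ and the Jacobi identity---this is where any genuine computational difficulty lies.

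For part (ii), the $G$-invariance of both the complex structure and Kähler form means $\Gamma$ acts on $Y = G/H$ by Kähler biholomorphisms. Since $\Gamma$ is torsion-free and uniform, the action is free and properly discontinuous, so $Y_\Gamma = \Gamma\backslash Y$ inherits the structure of a compact Kähler manifold. The $\Gamma$-equivariant projection $Y \to X$ descends to a holomorphic map $Y_\Gamma \to X_\Gamma$, and local triviality with fiber $K/H$ passes to the quotient, yielding the desired holomorphic $K/H$-bundle.
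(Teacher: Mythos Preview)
Your argument is essentially correct and, for the invariant complex structure, takes the same route as the paper: both identify the $(1,0)$-tangent space at $eH$ with $\mathfrak q\oplus\mathfrak p^+=\mathfrak h\oplus\mathfrak n^+\oplus\mathfrak p^+$ (your $\mathfrak u_Q$ is the paper's $\mathfrak n^+$) and invoke the subalgebra criterion of Theorem~\ref{criterion-ics}; both cite Borel for the K\"ahler metric, and your observation that $H=Z_G(S)$ (because $X_0\in\mathfrak s_0$ forces the $\mathfrak p_0$-component of any centralizer to vanish) is exactly the hypothesis that places $G/H$ in Borel's framework.

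Where you genuinely diverge is in establishing the \emph{holomorphic} bundle structure. You argue infinitesimally: $G$-equivariance plus the tangent-space check $d\pi_{eH}(\mathfrak m^+)=\mathfrak p^+$ shows the projection is a holomorphic submersion, and the smooth principal $K$-bundle $G\to G/K$ gives smooth local triviality. This is fine as far as it goes, but holomorphic local triviality is left implicit (one would need, e.g., rigidity of the flag-variety fibre or an explicit holomorphic structure group). The paper instead passes to the complexification: it realizes $G/H\hookrightarrow G_{\mathbb C}/R$ and $G/K\hookrightarrow G_{\mathbb C}/P$ with $R\subset P$ parabolic in $G_{\mathbb C}$, so $G/H\to G/K$ is simply the restriction of the manifestly holomorphic $P/R$-bundle $G_{\mathbb C}/R\to G_{\mathbb C}/P$. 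This buys not only holomorphic local triviality for free, but also identifies the holomorphic structure group as the parabolic $P$---a fact used essentially in Proposition~\ref{Ygamma projective} to embed $Y_\Gamma$ into an associated $\mathbb P(V)$-bundle and conclude projectivity. Your part~(ii) by descent under the free, properly discontinuous $\Gamma$-action is the same as the paper's.
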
    
As we would need in the sequel, we quote the following useful criterion for a homogeneous 
space $G/H$ to admit a $G$-invariant complex structure. See \cite[Proposition (6.1)]{vogan2}.

\begin{theorem} \label{criterion-ics}
 Let $G$ be a linear connected Lie group and let $H\subseteq G$ 
be a closed Lie subgroup.
    The coset space $G/H$ admits a $G$-invariant complex structure if there is a 
    subalgebra $\mathfrak r\subset \mathfrak g$ such that 
    $(i)$ $\mathfrak r\cap \bar{\mathfrak r}=\mathfrak h,$ $\mathfrak r
    +\bar{\mathfrak r}=\mathfrak g$ and
    $(ii)$ the adjoint action of $H$ on $\mathfrak g$ preserves $\mathfrak r.$
\end{theorem}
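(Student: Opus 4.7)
The plan is to construct a $G$-invariant almost complex structure $J$ on $G/H$ directly from the subalgebra $\mathfrak{r}$, and then appeal to the Newlander--Nirenberg theorem to conclude that $J$ is integrable. From hypothesis $(i)$ one has $\mathfrak{h}=\mathfrak{r}\cap\bar{\mathfrak{r}}\subseteq \mathfrak{r}$, so the quotients $\mathfrak{r}/\mathfrak{h}$ and $\bar{\mathfrak{r}}/\mathfrak{h}$ are well-defined complex subspaces of $\mathfrak{g}/\mathfrak{h}=(\mathfrak{g}_0/\mathfrak{h}_0)\otimes_{\mathbb{R}}\mathbb{C}$, which is the complexified tangent space of $G/H$ at $eH$. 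The two identities in $(i)$ together yield a direct sum decomposition $\mathfrak{g}/\mathfrak{h}=\mathfrak{r}/\mathfrak{h}\oplus \bar{\mathfrak{r}}/\mathfrak{h}$ into complex conjugate subspaces, and declaring $\bar{\mathfrak{r}}/\mathfrak{h}$ to be the holomorphic tangent space $T^{1,0}_{eH}(G/H)$ determines an almost complex structure $J_0$ on the real vector space $\mathfrak{g}_0/\mathfrak{h}_0$.

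Condition $(ii)$ says $\mathrm{Ad}(H)\mathfrak{r}=\mathfrak{r}$, and applying complex conjugation gives $\mathrm{Ad}(H)\bar{\mathfrak{r}}=\bar{\mathfrak{r}}$, so the above decomposition, and hence $J_0$, is $\mathrm{Ad}(H)$-invariant. Transporting $J_0$ from $T_{eH}$ to $T_{gH}$ via the differential of left multiplication by $g$ is therefore independent of the choice of coset representative, and produces a well-defined $G$-invariant almost complex structure $J$ on $G/H$.

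For integrability, by Newlander--Nirenberg it suffices to show that the $(0,1)$-distribution $\mathcal{T}^{0,1}\subset T^{\mathbb{C}}(G/H)$ is involutive. I would lift the question to $G$: the subalgebra $\mathfrak{r}\subseteq \mathfrak{g}$ generates a left-invariant complex distribution $\mathcal{R}\subset T^{\mathbb{C}}G$ whose image under the differential of the quotient $G\to G/H$ is exactly $\mathcal{T}^{0,1}$. Since $\mathfrak{r}$ is closed under the bracket, brackets of left-invariant complex vector fields tangent to $\mathcal{R}$ are again left-invariant and lie in $\mathcal{R}$, so $\mathcal{R}$ is involutive, and involutivity descends to $\mathcal{T}^{0,1}$. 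The hard part is really this last step: checking carefully that the projection $T^{\mathbb{C}}G\to T^{\mathbb{C}}(G/H)$ carries an involutive complex distribution to an involutive one, and that the (complex-valued) form of Newlander--Nirenberg indeed applies to the smooth almost complex structure $J$ constructed above. Once this bookkeeping is in place, the remaining verifications reduce to linear algebra in $\mathfrak{g}/\mathfrak{h}$ combined with the formalism of left-invariant vector fields on $G$.
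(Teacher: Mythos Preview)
The paper does not prove this theorem at all: it is quoted verbatim as a ``useful criterion'' from \cite[Proposition (6.1)]{vogan2} and used as a black box in the subsequent construction of the complex structure on $G/H$. So there is no proof in the paper to compare against.

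That said, your argument is the standard one and is essentially correct. A couple of small points worth tightening: first, since $\mathfrak{h}\subseteq\mathfrak{r}$, the left-invariant distribution $\mathcal{R}\subset T^{\mathbb{C}}G$ contains the complexified vertical distribution $\ker(d\pi)^{\mathbb{C}}$, so in fact $\mathcal{R}=(d\pi)^{-1}(\mathcal{T}^{0,1})$; this is what makes the ``involutivity descends'' step work cleanly, because any local section of $\mathcal{T}^{0,1}$ lifts to a section of $\mathcal{R}$, and $\pi$-relatedness then pushes the bracket down. Second, one can bypass Newlander--Nirenberg entirely by observing that the Nijenhuis tensor of $J$ is $G$-invariant, hence determined by its value at $eH$, where it is computed by Lie brackets in $\mathfrak{g}$ and vanishes precisely because $[\mathfrak{r},\mathfrak{r}]\subseteq\mathfrak{r}$. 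Either route is fine; the second is slightly more direct since it avoids the descent bookkeeping you flagged as the ``hard part.''
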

The second condition in the above theorem holds trivially if $H$ is connected. For the convenience of the reader, in what follows, we would show that $G/H$ admits a $G$-invariant complex structure with respect to which $Y_{\Gamma}\rightarrow X_{\Gamma}$ turns out to be a projection of a holomorphic fiber bundle with fiber $K/H$ and structure group a suitable parabolic subgroup of $G_{\mathbb{C}}$. We would further show that $Y_{\Gamma}$ is a complex projective manifold.
 
Let $\Delta$ denote the roots of $\mathfrak g$ with respect to $\mathfrak t.$
Denote by $\Delta_{K}$ the set of compact roots and by $\Delta_{\textrm{n}}$ the set of non-compact roots.  Thus $\mathfrak k=\mathfrak t\oplus (\bigoplus_{\alpha\in \Delta_K}\mathfrak g_\alpha)$ and $\mathfrak p=\bigoplus_{\alpha\in \Delta_{\textrm{n}}}\mathfrak g_\alpha,$ where 
$\mathfrak g_\alpha$ is the $\alpha$-root space of $\mathfrak g$. Let $\Delta_{H}=\{\alpha\in\Delta_{K}\mid \mathfrak{g}_{\alpha}\subseteq \mathfrak{h}\}$. Note that as $\mathfrak{h}$ is complex and $\overline{\mathfrak{g}_{\alpha}}=\mathfrak{g}_{-\alpha}$ for any $\alpha\in\Delta$, $\Delta_{H}=-\Delta_{H}$. Moreover as $\mathfrak{h}$ is the centralizer of $\mathfrak{s}$ in $\mathfrak{k}$, we have the decomposition: 
\[\mathfrak{h}=\mathfrak{t}\oplus(\bigoplus_{\alpha\in \Delta_{H}}\mathfrak{g}_{\alpha}).\]
Denote by $C$ the set of complementary roots 
$\Delta_K\setminus \Delta_H.$ 
Thus: 
\[\mathfrak k=\mathfrak h\oplus(\bigoplus_{\alpha\in C} \mathfrak g_\alpha).\] 
Let $\mathfrak{t}_{\mathbb{R}}:=\{X\in\mathfrak{t}\mid \alpha(X)\in\mathbb{R}~\forall~\alpha\in\Delta\}$, then $\mathfrak{t}_{\mathbb{R}}$ is a real form of $\mathfrak{t}$ and as every root is purely immaginary on $\mathfrak{t}_{0}$, we have $\mathfrak{t}_{\mathbb{R}}=i\mathfrak{t}_{0}$.

We claim that there is a positive system $\Delta^{+}$ for $\Delta$ which satisfies: $\alpha\in C^+(:=\Delta^{+}\cap C)\implies \alpha>\beta ~ \forall ~ \beta \in \Delta_{H}^{+}(:=\Delta_{H}\cap \Delta^{+})$, where for any $\alpha,\beta\in\Delta$, by $\alpha>\beta$, we mean that if $\alpha-\beta\in\Delta$ then $\alpha-\beta\in\Delta^{+}$. Once this is done, then $\mathfrak{r}:=\mathfrak{h}\oplus \mathfrak{n}^{+}\oplus \mathfrak{p}^{+}$ satisfies $\mathfrak{r}\cap \bar{\mathfrak{r}}=\mathfrak{h}$ and $\mathfrak{r}+\bar{\mathfrak{r}}=\mathfrak{g}$ where  $\mathfrak{n}^{\pm}:=\bigoplus_{\alpha\in\pm C^{+}}\mathfrak{g}_{\alpha}$ which satisfy $\overline{\mathfrak{n}^{+}}=\mathfrak{n}^{-}$. Consequently by Theorem $(\ref{criterion-ics})$, $G/H$ admits a $G$-invariant complex structure.

{\it Proof of claim:} It is known that there exists a non-zero element $X_{0}\in \mathfrak{z}(\mathfrak{k}_{0})$, the center of $\mathfrak{k}_{0}$ such that the $G$-invariant complex structure on $G/K$ is given by $ad_{X_{0}}|\mathfrak{p}_{0}$. Fix a real basis $\{H_{1},...,H_{s}\}$ of $\mathfrak{t}_{\mathbb{R}}\cap \mathfrak{s}$ with $H_{1}=-iX_{0}$, and extend it to a real basis $\{H_{1},...,H_{s},H_{s+1},...,H_{n}\}$ of $\mathfrak{t}_{\mathbb{R}}$. Now define $\Delta^{+}:=\{\alpha\in\Delta \mid \exists$ $k$ with $1\leq k\leq n$ which satisfies $\alpha(H_{i})=0$ for all $1\leq i\leq k-1$ and $\alpha(H_{k})> 0\}$. Then one checks that $\Delta^{+}$ is in fact a positive system for $\Delta$. The fact $\mathfrak{s}=(\mathfrak{s}\cap\mathfrak{t}_{\mathbb{R}})\oplus(\mathfrak{s}\cap i\mathfrak{t}_{\mathbb{R}})$ and our choice of positivity forces that $\alpha\in C^{+}\implies \alpha>\beta$ for all $\beta \in \Delta_{H}^{+}$. Indeed if $\alpha\in C^{+}$ and $\beta \in \Delta_{H}^{+}$, then $\exists$ $j\in \{1,....,s\}$ with $\alpha(H_{j})>0$ and $\alpha(H_{i})=0$ for all $1\leq i<j$. If this is not the case then $\alpha(\mathfrak{s}\cap \mathfrak{t}_{\mathbb{R}})=\{0\}$ which implies $\alpha(\mathfrak{s})=\{0\}$ ($\alpha$ is $\mathbb{C}$ linear and $\mathfrak{s}\cap\mathfrak{t}_{\mathbb{R}}$ is a real form of $\mathfrak{s}$). This means that $\mathfrak{g}_{\alpha}\subseteq \mathfrak{h}$, which is a contradiction because $\alpha\in C^{+}$. Now $\beta$ being an element of $\Delta_{H}^{+}$, is identically zero on $\mathfrak{s}\cap\mathfrak{t}_{\mathbb{R}}$. Thus if $\alpha-\beta$ is a root, it is a positive root in $\Delta_{K}$. 
Note that as $H_{1}=-iX_{0}$, we have $\mathfrak{p}^{\pm}=\bigoplus_{\alpha\in\Delta_{\textrm {n}}^{\pm}}\mathfrak{g}_{\alpha}$, where $\Delta_{\textrm{n}}^{\pm}:=\Delta_{\textrm{n}}\cap\pm\Delta^{+}$.  $\hfill$ $\Box$

Note that the subalgebra $\mathfrak{q}:=\mathfrak{h}\oplus\mathfrak{p}^{+}$ of $\mathfrak{k}$ satisfies $\mathfrak{q}\cap\bar{\mathfrak{q}}=\mathfrak{h}$ and $\mathfrak{q}+\bar{\mathfrak{q}}=\mathfrak{k}$. Hence by Theorem $(\ref{criterion-ics})$, $K/H$ has a $K$-invariant complex structure.

We now show that $K/H\rightarrow G/H\rightarrow G/K$ is a holomorphic fiber bundle. It is already clear that the inclusion of the fiber over the identity coset in $G/K$ and the projection $G/H\rightarrow G/K$ are holomorphic. We will stick to the above positive system for $\Delta$.  
It is clear that 
$\mathfrak n^\pm=\bigoplus_{\alpha\in \pm C^{+}} \mathfrak g_\alpha$ are nilpotent subalgebras of $\mathfrak k$ and $[\mathfrak h,\mathfrak n^\pm]\subseteq \mathfrak n^\pm$.  Thus $\bar{\mathfrak q}=\mathfrak h\oplus \mathfrak n^-$ is a parabolic subalgebra of $\mathfrak k$ and so $K_\mathbb C/Q$ is a generalized complex flag manifold, where $Lie(Q)=\mathfrak q$. It is known that $K$ acts transitively on $K_\mathbb C/Q$ so that $K/H\cong K_\mathbb C/Q$. 
By \cite[Proposition (7.14), \S7, Chapter VIII]{helgason}, we have an embedding  $G/K\subset G_\mathbb C/P$ where $P$ is the parabolic subgroup of $G_{\mathbb{C}}$ with Lie algebra $\mathfrak k\oplus \mathfrak p^-.$   Let $R\subset P$ be the parabolic subgroup 
of $G_\mathbb C$ with Lie algebra $\bar{\mathfrak r}=\mathfrak{h}\oplus\mathfrak{n}^-\oplus\mathfrak{p}^-.$  Then $R\cap K_\mathbb C=Q$ and 
we have 
$R\cap G=R\cap P\cap G=R\cap K=R\cap K_\mathbb C\cap K=Q\cap K=H$.
It follows 
that the quotient map $G/H\to G/K$ is the projection of a 
holomorphic bundle with fiber $K/H\cong K_\mathbb C/Q\cong P/R$ and holomorphic structure group $P$, as this is just 
the restriction to $G/K$ of the holomorphic $P/R$-bundle with projection $G_\mathbb C/R\to G_\mathbb C/P$ and holomorphic structure group $P$. 
\begin{remark}
There are only finitely many invariant complex structures on $K/H.$ 
See \cite{wang}.
Moreover, there is a natural bijective correspondence between the collection of 
$K$-invariant complex structures on $K/H$ and $G-$invariant complex structures on $G/H$ as $X=G/K$ is Hermitian symmetric. 
\end{remark}
\begin{proposition}\label{Ygamma projective}
Keeping the above notations, the space $Y_\Gamma$ admits a structure of a smooth complex projective variety. 
\end{proposition}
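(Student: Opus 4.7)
The plan is to apply Kodaira's embedding theorem. Since Theorem \ref{borel-noncompactbundle}(ii) already endows $Y_\Gamma$ with a K\"ahler structure, it suffices to construct a positive holomorphic line bundle on $Y_\Gamma$; I would build it as a twist of the pullback of an ample line bundle from $X_\Gamma$ by a holomorphic line bundle on $Y_\Gamma$ that is ample on every fibre $K/H$ of $\pi$.

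First, I would fix an ample holomorphic line bundle $A$ on $X_\Gamma$, which exists because $X_\Gamma$ is a smooth complex projective variety by the classical theorem of Kodaira. Next, the parabolic $R\subset G_{\mathbb C}$ has Lie algebra $\mathfrak h\oplus\mathfrak n^{-}\oplus\mathfrak p^{-}$, so its Levi factor has Lie algebra $\mathfrak h$ while its unipotent radical has Lie algebra $\mathfrak n^{-}\oplus \mathfrak p^{-}$; consequently every character $\chi$ of $H_{\mathbb C}$ extends uniquely to a holomorphic character $\tilde\chi\colon R\to\mathbb C^{*}$ by declaring it trivial on the unipotent radical. Since $K/H\cong K_{\mathbb C}/Q$ is a generalized complex flag manifold, its Picard group is generated by line bundles associated to characters of $Q$, so one can choose $\chi$ (equivalently $\tilde\chi$) such that the associated line bundle $K_{\mathbb C}\times_{Q}\mathbb C_{\chi}$ on $K/H$ is ample.

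Next, I would form the $G_{\mathbb C}$-equivariant holomorphic line bundle $\mathcal L_{\tilde\chi}:=G_{\mathbb C}\times_{R}\mathbb C_{\tilde\chi}$ on $G_{\mathbb C}/R$ and restrict it along the open holomorphic embedding $G/H\hookrightarrow G_{\mathbb C}/R$ that was established just above the proposition. The result is a $G$-equivariant, and in particular $\Gamma$-equivariant, holomorphic line bundle on $G/H$, which descends to a holomorphic line bundle $\mathcal M$ on $Y_\Gamma=\Gamma\backslash G/H$. Using the $G_{\mathbb C}$-equivariance of $\mathcal L_{\tilde\chi}$ together with the fact that every fibre of $G_{\mathbb C}/R\to G_{\mathbb C}/P$ is a $G_{\mathbb C}$-translate of $P/R$, I would verify that for each $x\in X_\Gamma$ the restriction $\mathcal M|_{\pi^{-1}(x)}$ is biholomorphic to the ample bundle $K_{\mathbb C}\times_{Q}\mathbb C_{\chi}$; hence $\mathcal M$ is ample on every fibre of $\pi$.

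Finally, I would invoke the standard relative ampleness criterion: for a proper holomorphic surjection $\pi\colon Y_\Gamma\to X_\Gamma$ of compact complex manifolds, if a line bundle $\mathcal M$ is ample on every fibre and $A$ is ample on the base, then $\mathcal M\otimes \pi^{*}A^{\otimes n}$ is ample on $Y_\Gamma$ for all sufficiently large $n$. Kodaira's embedding theorem applied to $Y_\Gamma$ with this positive line bundle yields the projectivity. The principal obstacle is the construction of the relatively ample line bundle $\mathcal M$: it relies on the identification $R\cap G=H$ and on the compatibility of the bundles $G/H\to G/K$ and $G_{\mathbb C}/R\to G_{\mathbb C}/P$ established before the proposition, together with the choice of a character of $H_{\mathbb C}$ whose associated bundle on $K_{\mathbb C}/Q$ is ample and its extension across the unipotent radical of $R$; once these are in place, the descent to $Y_\Gamma$ and the final ampleness argument are routine.
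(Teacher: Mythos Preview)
Your approach is correct and takes a genuinely different route from the paper's. The paper chooses an irreducible representation $(V,\rho)$ of the simply connected cover $\widetilde{G_{\mathbb C}}$ with a highest-weight line whose stabilizer is $\widetilde R=p^{-1}(R)$; by Schur's lemma the projective action descends to $G_{\mathbb C}$, giving a $P$-equivariant holomorphic embedding $P/R\hookrightarrow\mathbb P(V)$. One then forms the associated $\mathbb P(V)$-bundle $E\times_P\mathbb P(V)$ over $X_\Gamma$, which is projective by Kodaira's theorem on projective fibre bundles over projective bases, and observes that $Y_\Gamma\cong E\times_P P/R$ embeds holomorphically in it. Your argument instead builds a single relatively ample line bundle $\mathcal M$ from a character of $R$ and appeals to relative ampleness together with Kodaira's embedding theorem. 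The paper's method yields an explicit projective embedding and needs no curvature estimates, at the cost of passing through the simply connected cover; your method avoids that passage and is closer to the relative Proj philosophy. One point to tighten: the ``standard relative ampleness criterion'' you quote is usually formulated for projective morphisms of schemes, which is what you are proving. In the analytic setting you should phrase it as a curvature argument: since $\mathcal M$ arises from a $G$-equivariant bundle it carries an invariant Hermitian metric whose curvature is positive along every fibre, and compactness of $X_\Gamma$ then gives that $c_1(\mathcal M)+n\,\pi^*c_1(A)$ is a positive $(1,1)$-form for $n\gg0$.
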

\begin{proof}
Let $p:\widetilde{G_{\mathbb{C}}}\rightarrow G_{\mathbb{C}}$ be a holomorphic covering of $G_{\mathbb{C}}$ with $\widetilde{G_{\mathbb{C}}}$ being simply connected. Then $\widetilde{R}:=p^{-1}(R)$ is a parabolic subgroup of $\widetilde{G_{\mathbb{C}}}$ as $R$ is a parabolic subgroup of $G_{\mathbb{C}}$. It is well known that there exists an irreducible finite dimensional complex linear representation $(V,\rho)$ of $\widetilde{G_{\mathbb{C}}}$ and a non-zero vector $v$ of $V$ such that the stabilizer of $[v]\in \mathbb{P}(V)$ under the action of $\widetilde{G_{\mathbb{C}}}$ on $\mathbb{P}(V)$ induced by $\tilde{\pi}\circ\rho$ is same as $\widetilde{R}$, where $\tilde{\pi}:GL(V)\rightarrow PGL(V)$ is the natural projection. As $(V,\rho)$ is irreducible $\widetilde{G_{\mathbb{C}}}$-representation and $ker(p)\subseteq Z(\widetilde{G_{\mathbb{C}}})$, by Schur's lemma, the morphism of complex Lie groups $\tilde{\pi}\circ \rho:\widetilde{G_{\mathbb{C}}}\rightarrow PGL(V)$ descends to a morphism $\sigma:G_{\mathbb{C}}\rightarrow PGL(V)$, inducing a $G_{\mathbb{C}}$-action on $\mathbb{P}(V)$ which obviously restricts to a $P$-action on $\mathbb{P}(V)$. This shows that via the biholomorphism $\widetilde{G_{\mathbb{C}}}/\widetilde{R}\cong G_{\mathbb{C}}/R$, we get a $G_{\mathbb{C}}$-equivariant holomorphic embedding $G_{\mathbb{C}}/R\hookrightarrow \mathbb{P}(V)$ which restricts to a $P$-equivariant holomorphic embedding $P/R\hookrightarrow \mathbb{P}(V)$.

Let $E\rightarrow X_{\Gamma}$ denote the holomorphic principle $P$-bundle corresponding to the holomorphic fiber bundle $K/H\cong P/R\rightarrow Y_{\Gamma}\rightarrow X_{\Gamma}$ with holomorphic  structure group $P$. Then using the homomorphism $\sigma|_{P}:P\rightarrow PGL(V)$, we get that $E\times_{P}PGL(V)$ is a holomorphic principle $PGL(V)$-bundle over $X_{\Gamma}$. Note that there is an isomorphism $(E\times_{P}PGL(V))\times_{PGL(V)}\mathbb{P}(V)\cong E\times_{P}\mathbb{P}(V)$ of holomorphic fiber bundles with fiber $\mathbb{P}(V)$ and holomorphic structure group $PGL(V)$ over $X_{\Gamma}$. Thus as $X_{\Gamma}$ is a smooth complex projective variety \cite[Theorem (6)]{Kodaira}, by \cite[Theorem (8)]{Kodaira},  $E\times_{P}\mathbb{P}(V)$ is also a smooth complex projective variety. As there is an isomorphism $E\times_{P}P/R\cong Y_{\Gamma}$ of holomorphic fiber bundles over $X_{\Gamma}$ with fiber $P/R$ and holomorphic structure group $P$, the $P$-equivariant holomorphic embedding $P/R\hookrightarrow \mathbb{P}(V)$, obtained at the end of the earlier paragraph, shows that there is a natural holomorphic embedding $Y_{\Gamma}\hookrightarrow E\times_{P}\mathbb{P}(V)$ forcing $Y_{\Gamma}$ to be a smooth complex projective variety.
\end{proof}
We summarize the discussions so far in \S $(\ref{2.3})$ as the following theorem:
\begin{theorem} \label{ygamma-is-projective}
Let $X=G/K$ be a Hermitian symmetric domain of non-compact type where 
$G$ is a linear, connected, non-compact and real semisimple Lie group with no compact factors, and $K$ is a maximal compact subgroup of $G$. Let $H$ be the $K$-centralizer of a torus in $K$ containing the center of $K$.
Let $\Gamma$ be a torsion less and uniform lattice in $G$, $Y_{\Gamma}:=\Gamma\backslash G/H$ and $X_{\Gamma}:=\Gamma\backslash G/K$.
    Then $Y_\Gamma$ is 
    a smooth complex projective variety, which is the total space of holomorphic fiber bundle $\xi_\Gamma$ with holomorphic projection $\pi: Y_\Gamma\to X_\Gamma$, fiber space a generalized complex flag manifold biholomorphic to $K/H$ and  holomorphic structure group a parabolic subgroup of $G_{\mathbb{C}}$. $\hfill$ $\Box$
\end{theorem}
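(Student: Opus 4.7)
The plan is to construct a $G$-invariant complex structure on $Y=G/H$ by producing a subalgebra $\mathfrak{r}\subset\mathfrak{g}$ satisfying the hypotheses of Theorem \ref{criterion-ics}, then to identify the projection $G/H\to G/K$ as the restriction of a holomorphic homogeneous bundle between two generalized flag manifolds of $G_{\mathbb{C}}$, and finally to deduce projectivity of $Y_\Gamma$ by embedding it into a projective bundle over $X_\Gamma$.

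The natural candidate is $\mathfrak{r}=\mathfrak{h}\oplus\mathfrak{n}^{+}\oplus\mathfrak{p}^{+}$, where $\mathfrak{n}^{+}=\bigoplus_{\alpha\in C^{+}}\mathfrak{g}_{\alpha}$ and $C^{+}=\Delta^{+}\cap(\Delta_{K}\setminus\Delta_{H})$ for a suitable positive system $\Delta^{+}$. To arrange the inclusion $[\mathfrak{h},\mathfrak{n}^{+}]\subseteq\mathfrak{n}^{+}$ (so that $\bar{\mathfrak{r}}=\mathfrak{h}\oplus\mathfrak{n}^{-}\oplus\mathfrak{p}^{-}$ is a parabolic subalgebra), I would fix $X_{0}\in\mathfrak{z}(\mathfrak{k}_{0})$ with complex structure $ad(X_{0})|_{\mathfrak{p}_{0}}$, pick a real basis $H_{1}=-iX_{0},H_{2},\dots,H_{s}$ of $\mathfrak{t}_{\mathbb{R}}\cap\mathfrak{s}$, extend it to a basis of $\mathfrak{t}_{\mathbb{R}}=i\mathfrak{t}_{0}$, and take $\Delta^{+}$ to be the associated lexicographic positive system. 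The key verification is that any $\alpha\in C^{+}$ must take a positive value on some $H_{j}$ with $j\le s$ (otherwise $\alpha$ vanishes on $\mathfrak{s}\cap\mathfrak{t}_{\mathbb{R}}$, hence on $\mathfrak{s}$, forcing $\mathfrak{g}_{\alpha}\subseteq\mathfrak{h}$), whereas every $\beta\in\Delta_{H}^{+}$ is identically zero on $\mathfrak{s}\cap\mathfrak{t}_{\mathbb{R}}$; consequently $\alpha-\beta\in\Delta$ forces $\alpha-\beta\in\Delta^{+}$, giving $\mathfrak{r}\cap\bar{\mathfrak{r}}=\mathfrak{h}$ and $\mathfrak{r}+\bar{\mathfrak{r}}=\mathfrak{g}$. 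Applying Theorem \ref{criterion-ics} then yields a $G$-invariant complex structure on $G/H$, and the corresponding parabolic $R\subset G_{\mathbb{C}}$ with $\mathrm{Lie}(R)=\bar{\mathfrak{r}}$ is contained in the parabolic $P$ with $\mathrm{Lie}(P)=\mathfrak{k}\oplus\mathfrak{p}^{-}$. Using the Borel embedding $G/K\hookrightarrow G_{\mathbb{C}}/P$ from \cite{helgason} and verifying $R\cap G=H$ and $R\cap K_{\mathbb{C}}=Q$, one identifies $G/H\to G/K$ as the pullback of the holomorphic $P/R$-bundle $G_{\mathbb{C}}/R\to G_{\mathbb{C}}/P$ along $G/K\hookrightarrow G_{\mathbb{C}}/P$. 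Taking $\Gamma$-quotients then produces the holomorphic $K/H$-bundle $\pi:Y_{\Gamma}\to X_{\Gamma}$ with structure group $P$.

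For projectivity, I would exhibit a $G_{\mathbb{C}}$-equivariant embedding $G_{\mathbb{C}}/R\hookrightarrow\mathbb{P}(V)$ and transport it along the bundle. Since $G_{\mathbb{C}}$ need not be simply connected, I would pass to the universal cover $p:\widetilde{G_{\mathbb{C}}}\to G_{\mathbb{C}}$, consider $\widetilde{R}=p^{-1}(R)$, and use an irreducible representation $(V,\rho)$ of $\widetilde{G_{\mathbb{C}}}$ whose highest weight line has stabilizer $\widetilde{R}$; Schur's lemma (applied to $\ker p\subset Z(\widetilde{G_{\mathbb{C}}})$) lets the projective representation descend to $\sigma:G_{\mathbb{C}}\to PGL(V)$, producing the required embedding which restricts to a $P$-equivariant embedding $P/R\hookrightarrow\mathbb{P}(V)$. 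Let $E\to X_{\Gamma}$ be the principal $P$-bundle underlying $\xi_{\Gamma}$; the associated $\mathbb{P}(V)$-bundle $E\times_{P}\mathbb{P}(V)$ is projective by Kodaira's theorem \cite[Theorem (8)]{Kodaira} applied over the projective base $X_{\Gamma}$, and the embedding $P/R\hookrightarrow\mathbb{P}(V)$ yields a closed holomorphic embedding $Y_{\Gamma}\cong E\times_{P}(P/R)\hookrightarrow E\times_{P}\mathbb{P}(V)$, forcing $Y_{\Gamma}$ to be projective.

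The main technical obstacle is the combinatorial verification in the middle paragraph: choosing the positive system so that the single subalgebra $\mathfrak{r}$ simultaneously restricts to the complex structure data on the fiber $K/H$ (via $\mathfrak{q}=\mathfrak{h}\oplus\mathfrak{n}^{+}$) and on the base $G/K$ (via $\mathfrak{p}^{+}$), and checking the implication ``$\alpha\in C^{+}$ dominates $\beta\in\Delta_{H}^{+}$''. Once this is in place, the bundle-theoretic and Kodaira-projectivity steps are essentially formal.
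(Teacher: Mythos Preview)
Your proposal is correct and follows essentially the same approach as the paper: the same lexicographic positive system built from a basis of $\mathfrak{t}_{\mathbb{R}}\cap\mathfrak{s}$ beginning with $-iX_{0}$, the same subalgebra $\mathfrak{r}=\mathfrak{h}\oplus\mathfrak{n}^{+}\oplus\mathfrak{p}^{+}$ and verification that $\alpha\in C^{+}$ dominates $\Delta_{H}^{+}$, the same identification of $G/H\to G/K$ as the restriction of $G_{\mathbb{C}}/R\to G_{\mathbb{C}}/P$, and the same projectivity argument via the universal cover, Schur's lemma, and Kodaira's theorem on projective bundles. The only difference is expository packaging.
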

\begin{remark}
    At this point we would like to mention that the holomorphic fiber bundle $\xi_{\Gamma}$ with fiber $K/H\cong P/R$ and holomorphic structure group $P$ may not necessarily admit a holomorphic reduction of structure group to $K_{\mathbb{C}}$ or a smooth reduction of structure group to $K$, although $K/H\rightarrow Y_{\Gamma}\rightarrow X_{\Gamma}$ is in its own right a smooth fiber bundle with fiber $K/H$ and smooth structure group $K$.    
\end{remark}
\subsection{Cohomology and Picard group of $Y_\Gamma$:} Since $Y_\Gamma$ is Kähler, 
we have the Hodge decomposition $H^r(Y_\Gamma,\mathbb C)=\bigoplus_{p+q=r}H^{p,q}(Y_\Gamma).$  The (singular) cohomology of $Y_\Gamma$ may be computed using the Leray-Serre spectral sequence of the bundle $\xi_\Gamma$, whereas the Dolbeault cohomology of $Y_{\Gamma}$ may be computed using the Borel spectral sequence of the bundle $\xi_{\Gamma}$. In the sequel we prove that when $G_{\mathbb{C}}$ is simply connected, the Leray-Hirsch theorem applies for the bundle $\xi_{\Gamma}$ with rational coefficients, from which we deduce a decomposition of the Dolbeault cohomology of $Y_{\Gamma}$ in terms of the same for its base and fiber spaces. We begin with the following remark, which points out a special case when the Leray-Hirsch theorem applies to $\xi_{\Gamma}$ with integer coefficients.
\begin{remark}
Suppose that $K/H$ is a product of flag manifolds of type $A$.  Thus, $K/Z(K)$ is locally isomorphic to $\prod_{1\le j\le k}
SU(n_j).$ Then a more refined argument yields an isomorphism with integer coefficients: 
\[H^{r}(Y_\Gamma,\mathbb Z)\cong \bigoplus_{p+q=r}H^p(X_\Gamma,\mathbb Z)\otimes H^q(K/H,\mathbb Z).\]
Note that our hypothesis on $K$ is equivalent to the requirement that each simple factor of 
$G$ be locally isomorphic to one of the groups 
$SU(p,q),Sp(n,\mathbb R), SO^*(2n)$ or $ SO_0(2, 3).$
\end{remark} 
We now turn to the rational cohomology of $Y_\Gamma$ in the general case.
We start with the following proposition:  
\begin{proposition} \label{algebraic-vector-bundles}
    Any complex vector bundle $\mathcal V$ over $K/H$, arising from 
    a representation $\rho: H\to GL(V),$
    can be extended to a holomorphic vector bundle $\mathcal V_\Gamma$ over $Y_\Gamma.$  
    \end{proposition}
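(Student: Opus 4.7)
The plan is to realize $\mathcal V_\Gamma$ as the holomorphic vector bundle associated to the principal $R$-bundle $E\to Y_\Gamma$ built in the proof of Theorem~\ref{ygamma-is-projective}, after extending $\rho$ to a holomorphic representation of the parabolic subgroup $R\subset G_\mathbb{C}$ whose Levi part is $H_\mathbb{C}$. Recall from \S\ref{2.3} that $R$ has Lie algebra $\bar{\mathfrak r}=\mathfrak h\oplus \mathfrak n^-\oplus \mathfrak p^-$, so its Levi decomposition is $R=H_\mathbb{C}\ltimes U_R$, where $U_R$ is the unipotent radical with Lie algebra $\mathfrak n^-\oplus \mathfrak p^-$.

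First I would extend $\rho: H\to GL(V)$ to a holomorphic homomorphism $\tilde\rho: R\to GL(V)$. Since $H=Z_K(S)$ is compact with complexification $H_\mathbb{C}$, the continuous finite-dimensional representation $\rho$ extends uniquely to a holomorphic representation $\hat\rho: H_\mathbb{C}\to GL(V)$ via the standard complexification procedure for compact groups. Using the semidirect decomposition $R=H_\mathbb{C}\ltimes U_R$, I then set $\tilde\rho(hu):=\hat\rho(h)$ for $h\in H_\mathbb{C}, u\in U_R$; since $U_R$ is normal in $R$, this is a well-defined holomorphic group homomorphism.

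Next, by (the proof of) Theorem~\ref{ygamma-is-projective}, $\xi_\Gamma: Y_\Gamma\to X_\Gamma$ arises from a holomorphic principal $P$-bundle $E\to X_\Gamma$ via $Y_\Gamma\cong E\times_P(P/R) = E/R$; equivalently, $E\to Y_\Gamma$ is a holomorphic principal $R$-bundle. Setting
\[
\mathcal V_\Gamma := E\times_R V
\]
(with $R$ acting on $V$ through $\tilde\rho$) produces a holomorphic vector bundle on $Y_\Gamma$. To verify that its restriction to a fiber $\iota: K/H\hookrightarrow Y_\Gamma$ recovers $\mathcal V$, I would trivialize $E$ over a neighborhood of a base point $x\in X_\Gamma$, identifying the restriction of $E\to Y_\Gamma$ to the fiber $\pi^{-1}(x)\cong P/R$ with the principal $R$-bundle $P\to P/R$, so that $\mathcal V_\Gamma|_{\pi^{-1}(x)}=P\times_R V$. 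The relation $R\cap K=H$ from \S\ref{2.3} together with the biholomorphism $K/H\xrightarrow{\sim} P/R$ then pulls $P\times_R V$ back to $K\times_H V=\mathcal V$, as required.

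The only nontrivial step is the first one, the extension of $\rho$ to a holomorphic representation of the full parabolic $R$. This hinges essentially on the fact that $H_\mathbb{C}$ is the Levi subgroup of $R$, so that the trivial extension across $U_R$ is compatible with the group law. Once $\tilde\rho$ is in hand, the rest of the construction is a formal manipulation with holomorphic associated bundles, using the principal $P$-bundle description of $\xi_\Gamma$ already established.
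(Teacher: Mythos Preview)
Your proof is correct and takes essentially the same approach as the paper: extend $\rho$ holomorphically to $R$ by making it trivial on the unipotent radical (using that $H_\mathbb{C}$ is a Levi subgroup of $R$), then form the associated holomorphic vector bundle over $Y_\Gamma$. The paper phrases the second step slightly more concretely---restricting $G_\mathbb{C}\times_{\rho'} V\to G_\mathbb{C}/R$ to $G/H\hookrightarrow G_\mathbb{C}/R$ and then modding out by the left $\Gamma$-action to obtain $\Gamma\backslash(G\times_\rho V)$---rather than via the principal $R$-bundle $E\to Y_\Gamma$, but the two constructions yield the same bundle and the verification that it restricts to $\mathcal V=K\times_\rho V$ on the fiber is identical.
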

\begin{proof} Let $\tilde{\rho}:H_{\mathbb{C}}\rightarrow GL(V)$ denote the unique holomorphic representation of $H_{\mathbb{C}}$ extending $\rho$. Since $H_\mathbb C$ is a Levi subgroup of $R$, $\tilde{\rho}$ further extends to a holomorphic representation $\rho':R\to GL(V)$ where $\rho'$ is trivial on the 
unipotent radical of $R$.
Therefore we obtain a holomorphic vector bundle $\mathcal V':=G_\mathbb C\times_{\rho'} V$ over $G_{\mathbb{C}}/R$.
This bundle restricts to a holomorphic vector bundle $\widetilde{ \mathcal V}\cong G\times_{\rho}V$
over $G/H$ via the holomorphic inclusion $G/H\hookrightarrow G_\mathbb C/R.$  We mod out by the left action 
of $\Gamma$ on the total space of $\widetilde{\mathcal V}$ to obtain a holomorphic 
vector bundle $\widetilde{\mathcal V}_\Gamma$ over $Y_{\Gamma}$. It is clear that $\widetilde{\mathcal V}_\Gamma$ restricts 
to $\mathcal V=K\times_{\rho}V$ over $K/H$ via the holomorphic inclusion of fiber over the identity coset in $X_{\Gamma}$. This also shows that $\mathcal{V}$ has a structure of a holomorphic vector bundle over $K/H$.
\end{proof}
Suppose that $p:\widetilde {G_\mathbb C}\to G_\mathbb C$ is a covering projection with $\widetilde {G_\mathbb C}$ connected. Let $\widetilde G\subset \widetilde {G_\mathbb C}$ be the Lie subgroup of $\widetilde {G_\mathbb C}$ corresponding to the real form $\mathfrak g_0\subset\mathfrak g$.  Then $p_0:\widetilde G\to G$ is a covering projection.  The subgroup $\widetilde K:=p_0^{-1}(K)$ is connected and is a maximal compact 
subgroup of $\widetilde G.$  Thus $X=G/K\cong \widetilde G/\widetilde K.$

\begin{proposition} \label{simplyconnectedK} We keep the above notations. Suppose that $\widetilde{ G_{\mathbb{C}}}$ is simply connected. Then\\ 
    $(i)$ The subgroup $\widetilde {K_\mathbb C}\subset 
    \widetilde {G_\mathbb C}$ corresponding to the Lie subalgebra $\mathfrak k\subset \mathfrak g$ 
    is the complexification of $\widetilde K$ and $\pi_1(\widetilde K)$ is a free abelian group.\\  
   $(ii)$ Let $S\subseteq T$ be a subtorus that contains $Z(K), H=Z_K(S),$ and let 
$\widetilde{S}$
     be the identity component of $p_0^{-1}(S)\subset \widetilde T$ where  
 $\widetilde {T}:=p_0^{-1}(T).$   Then $\widetilde{T}$ is a maximal torus of $\widetilde K$,  $\widetilde{S}$ contains $Z_{0}(\widetilde{K})$, 
    the identity component of $Z(\widetilde{K})$.
     The group $\widetilde H:=p_0^{-1}(H)$ is connected and equals $Z_{\widetilde K}(\widetilde S)$ and 
     $\pi_1(\widetilde H)$ is a free abelian group.
    \end{proposition}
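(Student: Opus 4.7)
The plan is to leverage the simple connectedness of $\widetilde{G_\mathbb C}$ to descend structural information to $\widetilde K$ and $\widetilde H$, and to compute the fundamental groups via the long exact sequence of appropriate fibrations over generalized flag manifolds. For (i), I would invoke the classical fact that the maximal compact subgroup $U\subset \widetilde{G_\mathbb C}$, with Lie algebra $\mathfrak u_0=\mathfrak k_0\oplus i\mathfrak p_0$, is simply connected. The Cartan involution $\theta$, extended complex linearly to $\mathfrak g$, preserves $\mathfrak u_0$ and integrates by simple connectedness of $U$ to an involutive Lie group automorphism $\Theta_U$ of $U$; its fixed point set is closed, has Lie algebra $\mathfrak k_0$, and is connected by the classical theorem that fixed sets of finite-order automorphisms of simply connected compact Lie groups are connected. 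Thus $\widetilde K$ is a closed, hence compact, subgroup of $U$. The global Cartan decomposition $\widetilde{G_\mathbb C}=U\cdot \exp(i\mathfrak u_0)$ restricts to a diffeomorphism $\widetilde K\times i\mathfrak k_0 \to \widetilde{K_\mathbb C}$, $(u,X)\mapsto u\exp(X)$, showing that $\widetilde{K_\mathbb C}$ is closed in $\widetilde{G_\mathbb C}$ with $\widetilde K$ as maximal compact subgroup, so it is the universal complexification of $\widetilde K$ by the standard theory of reductive complexifications. For the fundamental group, the long exact sequence of $\widetilde K \to U \to U/\widetilde K = X_\mathrm{u}$, the compact dual of $X$, together with $\pi_1(U)=0=\pi_2(U)$, gives $\pi_1(\widetilde K)\cong \pi_2(X_\mathrm{u})$; as $X_\mathrm{u}$ is a generalized complex flag manifold, its integral homology is free abelian concentrated in even degrees, and by Hurewicz $\pi_2(X_\mathrm{u})\cong H_2(X_\mathrm{u},\mathbb Z)$ is free abelian.

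For (ii), connectedness of $\widetilde T := p_0^{-1}(T)$ follows from the observation that the map $\widetilde K/\widetilde T \to K/T$ induced by $p_0$ is a bijective local diffeomorphism, so $\widetilde K/\widetilde T \cong K/T$, which is a generalized flag manifold and hence simply connected; the long exact sequence of the fibration $\widetilde T \to \widetilde K \to \widetilde K/\widetilde T$ then forces $\pi_0(\widetilde T) = 0$. Since $\widetilde T$ has Lie algebra $\mathfrak t_0$, a Cartan subalgebra of $\mathfrak k_0$, it is a maximal torus of $\widetilde K$. The same argument applied to $K/H$ (a generalized flag manifold by Borel's theorem) gives $\widetilde H := p_0^{-1}(H)$ connected. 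The inclusion $Z_0(\widetilde K)\subseteq \widetilde S$ is a Lie algebra computation: the hypothesis $Z(K)\subseteq S$ yields $\mathfrak z(\mathfrak k_0)\subseteq \mathfrak s_0$, and both subgroups are connected with these Lie algebras. To identify $\widetilde H = Z_{\widetilde K}(\widetilde S)$, I would observe that both are connected subgroups of $\widetilde K$ sharing the Lie algebra $Z_{\mathfrak k_0}(\mathfrak s_0)=\mathfrak h_0$, where connectedness of $Z_{\widetilde K}(\widetilde S)$ is the classical theorem that centralizers of tori in compact connected Lie groups are connected. Finally, the fibration $\widetilde H \to \widetilde K \to \widetilde K/\widetilde H = K/H$ yields the short exact sequence $0 \to \pi_2(K/H) \to \pi_1(\widetilde H) \to \pi_1(\widetilde K) \to 0$; since $\pi_2(K/H)$ and $\pi_1(\widetilde K)$ are both free abelian (by the Schubert structure of $K/H$ and part (i) respectively), $\pi_1(\widetilde H)$ is an extension of free abelian groups by free abelian groups and hence itself free abelian.

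The main technical point is verifying that $\widetilde{K_\mathbb C}$ is closed in $\widetilde{G_\mathbb C}$ and genuinely deserves to be called the complexification of $\widetilde K$, as analytic subgroups of Lie groups need not be closed in general. This hinges on compactness of $\widetilde K$ together with the global Cartan decomposition of the simply connected group $\widetilde{G_\mathbb C}$; once these are in hand, the remaining assertions about tori, centralizers, and fundamental groups reduce to straightforward applications of the long exact sequence of a fibration together with the Schubert cell structure of generalized flag manifolds.
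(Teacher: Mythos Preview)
Your proposal is correct and follows essentially the same approach as the paper: both arguments compute $\pi_1(\widetilde K)$ via the homotopy exact sequence of $\widetilde K\to U\to X_{\mathrm u}$ and $\pi_1(\widetilde H)$ via that of $\widetilde H\to \widetilde K\to K/H$, using the Schubert/flag-manifold structure to see the relevant $\pi_2$ is free abelian. The only difference is that the paper cites \cite[Theorem~27.1]{bump} for the complexification statement and declares the elementary assertions in (ii) ``easily seen,'' whereas you supply explicit arguments (the fixed-point set of $\Theta_U$ on the simply connected $U$, the global Cartan decomposition, and the bijectivity of $\widetilde K/\widetilde T\to K/T$); these are welcome elaborations rather than a different route.
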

\begin{proof}
   $(i)$ Note that $\widetilde {K_\mathbb C}$ is the complexification of $K$  
    by \cite[Theorem $(27.1)$]{bump}.  It remains to show that $\pi_1(\widetilde K)$ has no torsion.

    Let $U\subset \widetilde {G_\mathbb C}$ be the maximal compact subgroup of $\widetilde {G_\mathbb C}$ 
    that contains $\widetilde K.$  Then $X_{\textrm{u}}:=U/\widetilde K$ is the compact dual of $X=G/K\cong\widetilde G/\widetilde K.$  Since $\widetilde {G_\mathbb C}$
    is simply-connected, so is $U$. Since $H_2(X_{\textrm{u}},\mathbb Z)\cong \pi_2(X_{\textrm{u}})$ 
    has no torsion, using the homotopy exact sequence associated to the principal 
    $\widetilde K$-bundle with projection $U\to X_{\textrm{u}}$ we see that $\pi_1(\widetilde K) $
    has no torsion.

    $(ii)$  It is easily seen that $\widetilde{T}$ is a maximal torus of $\widetilde{K}$, $\widetilde H$ is connected, $Z_{0}(\widetilde{K})\subseteq \widetilde{S}$, 
    and that     
    $\widetilde H=Z_{\widetilde K}(\widetilde S)$.  Now $F:=K/H\cong \widetilde K/\widetilde H$ is simply connected and is also a generalized 
    complex flag manifold. So $\widetilde{H}$ is connected. Using homotopy exact sequence of the principal $\widetilde H$-bundle with projection 
    $\widetilde K\to F,$ the fact that $\pi_2(F)$ is free abelian, and that $\pi_1(\widetilde{K})$ is free abelian by $(i)$,  it follows that $\pi_1(\widetilde{H})$ is also free abelian.  
    \end{proof}
\begin{remark}\label{gc-simplyconnected}
If $\Gamma\subset G$ is a torsion free lattice, then $p_0^{-1}(\Gamma)=\widetilde \Gamma$ is a lattice 
in $\widetilde G$ but contains the kernel of $p_{0}$, which is a finite group, and so is not torsion free.  Although $\Gamma\backslash G/H\cong \widetilde \Gamma \backslash\widetilde G/\widetilde H,$ in order to prove surjectivity of the homomorphism $j^!:\mathcal K(Y_\Gamma)\to \mathcal K(F)$ in Theorem $(\ref{fiber-restriction-is-surjectivein-ktheory})$, we need to apply Proposition $(\ref{algebraic-vector-bundles})$ for the lattice $\widetilde \Gamma.$ 
For this reason, we shall assume, up to the end of Theorem $(\ref{hodge-decomposition})$, that the complexification $G_\mathbb C$ of $G$ is simply connected.  Then the proof of Proposition $(\ref{simplyconnectedK})$ shows that $\pi_1(K), \pi_1(H)$ are free abelian.  
\end{remark}
  We recall the description of the topological complex $K$-ring $K(F)$ where $F=K/H$ is a complex 
  flag manifold.  Let $RH$ denote the complex representation ring of $H$. (See \cite{husemoller}). One has the restriction homomorphism $\rho :RK\to RH$ which makes $RH$ into a $RK$-module.  We have the augmentation $RK\to \mathbb Z$ which maps $[V]$ to $\dim V$ for any continuous complex representation $V$ of $K.$
  If $V$ is a continuous complex representation of $H$, we have a vector bundle $K\times_HV\to F$.
  This defines a ring homomorphism $\rho:RH\to K(F).$  
  Since $\pi_1(H)$ has no torsion, the work of Atiyah and Hirzebruch \cite{atiyah-hirzebruch} 
  and Pittie \cite{pittie} show that $\rho$ defines an isomorphism of rings 
  $RH\otimes_{RK}\mathbb  Z\to K(F)$.  
  The ring $RH\otimes_{RK} \mathbb Z$ is the quotient $RH/J$ of $RH$ modulo the ideal generated 
  by elements $[V]-\dim V$ where $V$ is the restriction to $H$ of a continuous complex representation 
  of $K$.

Let $Z$ be a complex projective manifold. Denote by $\mathcal K(Z)$ the Grothendieck $K$-ring of {\it holomorphic} vector bundles over $Z$.  
When $Z=F$, a generalized complex flag manifold, the natural `forgetful' homomorphism $\mathcal K(F)\to K(F)$ is an 
isomorphism of rings. This a consequence of the {\it cellular filtration lemma}---see \cite[Proposition (5.5.6)]{chriss-ginzburg}, applied to the situation where, in their notation, $G$
is the trivial group and $X$ is a point.
Therefore we obtain an isomorphism $ \overline{\rho}: RH/J\to \mathcal K(F).$

Explicitly, if 
 $\sigma_0:H\to GL(V)$ is homomorphism, we obtain a unique complex analytic extension $\sigma:H_\mathbb C\to GL(V)$ of $\sigma_0$,  which then 
extends to a 
complex analytic representation $\tilde\sigma:Q\to GL(V)$, as $H_{\mathbb{C}}$ is a Levi subgroup of $Q$.  We therefore obtain a complex analytic 
vector bundle $\mathcal V=K_\mathbb C\times_{\tilde \sigma} V$ over $ K_\mathbb C/Q$. 
Under the natural identification $F=K/H\cong K_\mathbb C/Q,$ the isomorphism $\bar\rho$
sends $[V]+J\in RH/J$  to $[\mathcal V]$ in $\mathcal K(F).$

For any $b\in X_\Gamma$, the inclusion of a fiber $j_b:F_b\hookrightarrow Y_\Gamma$ yields the homomorphism $j_b^!:\mathcal K(Y_\Gamma)\to \mathcal K(F_b)\cong \mathcal K(F)$, via restriction.  We shall take 
$b\in X_\Gamma$ to be $\Gamma K$, the trivial double coset, so that $F_b=F=K/H\cong K_\mathbb C/Q.$
 The following theorem follows from 
 Proposition $(\ref{algebraic-vector-bundles})$, the fact that every holomorphic line bundle on $K_{\mathbb{C}}/Q$ arises from a holomorphic character of $Q$ which is trivial on $Q_{\textrm{u}}$, the unipotent part of $Q$, and the above discussion.  
 \begin{theorem} \label{fiber-restriction-is-surjectivein-ktheory} Let $F=K/H$. 
The homomorphism
$j^!:\mathcal K(Y_\Gamma)\to \mathcal K(F)$ 
is a surjection. Moreover 
$j^!:Pic(Y_\Gamma)\to Pic(F)$ and $j^*:H^2(Y_\Gamma,\mathbb Z)\to H^2(F,\mathbb Z)$
are surjections.
\hfill $\Box$
\end{theorem}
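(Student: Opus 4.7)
The plan is to deduce all three surjectivity statements by combining Proposition \ref{algebraic-vector-bundles} with the structural descriptions of $\mathcal{K}(F)$ and $\textrm{Pic}(F)$ recalled just above the theorem.

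For the surjectivity of $j^!:\mathcal{K}(Y_\Gamma)\to \mathcal{K}(F)$: using the isomorphism $\bar\rho:RH/J\to \mathcal{K}(F)$ coming from the combined work of Atiyah--Hirzebruch and Pittie together with the cellular filtration lemma, every class in $\mathcal{K}(F)$ can be written as $[\mathcal{V}_1]-[\mathcal{V}_2]$, where each $\mathcal{V}_i$ is the holomorphic vector bundle $K_\mathbb{C}\times_{\tilde\sigma_i}V_i$ associated to a representation $\sigma_i:H\to GL(V_i)$. By Proposition \ref{algebraic-vector-bundles}, each such $\mathcal{V}_i$ is the restriction under $j_b$ of a holomorphic vector bundle $\widetilde{\mathcal{V}}_{i,\Gamma}$ on $Y_\Gamma$ constructed from $G_\mathbb{C}\times_{\rho'_i}V_i$ via the inclusion $G/H\hookrightarrow G_\mathbb{C}/R$ and the $\Gamma$-quotient. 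The class $[\widetilde{\mathcal{V}}_{1,\Gamma}]-[\widetilde{\mathcal{V}}_{2,\Gamma}]$ in $\mathcal{K}(Y_\Gamma)$ then maps under $j^!$ to the prescribed class in $\mathcal{K}(F)$.

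For the surjectivity of $j^!:\textrm{Pic}(Y_\Gamma)\to \textrm{Pic}(F)$: every holomorphic line bundle $L$ on $F\cong K_\mathbb{C}/Q$ arises from a holomorphic character $\chi:Q\to \mathbb{C}^{*}$ which is trivial on the unipotent radical $Q_{\textrm{u}}$, hence is determined by its restriction to the Levi $H_\mathbb{C}$, and therefore by the continuous character $\chi|_{H}:H\to \mathbb{C}^{*}$. Applying Proposition \ref{algebraic-vector-bundles} to the one-dimensional representation $\chi|_{H}$ produces a holomorphic line bundle on $Y_\Gamma$ whose restriction along $j_b$ recovers $L$. This establishes the second surjection.

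For the surjectivity of $j^*:H^2(Y_\Gamma,\mathbb{Z})\to H^2(F,\mathbb{Z})$: since $F$ is a generalized complex flag manifold we have $H^1(F,\mathcal{O}_F)=H^2(F,\mathcal{O}_F)=0$, so the exponential sequence gives that $c_1:\textrm{Pic}(F)\to H^2(F,\mathbb{Z})$ is an isomorphism. Given $\alpha\in H^2(F,\mathbb{Z})$, write $\alpha=c_1(L)$ and lift $L$ to $\widetilde{L}\in \textrm{Pic}(Y_\Gamma)$ by the previous step; naturality of $c_1$ yields $j^{*}c_1(\widetilde{L})=c_1(j^{!}\widetilde{L})=c_1(L)=\alpha$. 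There is really no single hard step here since the heavy lifting was done in Proposition \ref{algebraic-vector-bundles} and in identifying $\mathcal{K}(F)$ with $RH/J$; the only point that demands care is tracking that the various module-theoretic and geometric descriptions of bundles on $F$ match under $\bar\rho$, so that the lifts produced on $G_\mathbb{C}/R$ do restrict, on the fiber over $\Gamma K$, to the originally given bundles on $F$.
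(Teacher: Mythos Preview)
Your proposal is correct and follows essentially the same approach as the paper: the paper's one-line proof simply points to Proposition~\ref{algebraic-vector-bundles}, the identification $\bar\rho:RH/J\cong\mathcal K(F)$ recalled in the preceding discussion, and the fact that holomorphic line bundles on $K_\mathbb C/Q$ come from characters of $Q$ trivial on $Q_{\textrm u}$, which is exactly the combination you have unpacked. Your use of the exponential sequence to deduce the $H^2$ surjectivity from the Picard surjectivity is the intended final step.
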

\begin{remark}\label{picard-surj}
The surjectivity of $j^{!}:Pic(Y_\Gamma)\rightarrow  Pic(F)$ and therefore that of $j^{*}:H^2(Y_\Gamma,\mathbb Z)\rightarrow H^2(F,\mathbb Z)$ in the above theorem hold without the assumption that 
$G_\mathbb C$ is simply connected. 
\end{remark}
Next we consider the behaviour of topological complex $K$-ring and the rational cohomology under the restriction to the fiber over any base point in $X_{\Gamma}$.
\begin{proposition}\label{leray-hirsch}
    For any $x\in X_\Gamma$, the fiber inclusion $j_x:F_x\hookrightarrow Y_\Gamma$ induces surjections:
    $(i)$ $j_x^!:K(Y_\Gamma)\to K(F_x)$ and  
     $(ii)$ $j_x^*:H^*(Y_\Gamma,\mathbb Q)\to H^*(F_x,\mathbb Q)$.
\end{proposition}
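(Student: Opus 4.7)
The plan is to establish both surjections first at a single convenient base point, namely the identity double coset $b := \Gamma K \in X_\Gamma$, where Theorem $(\ref{fiber-restriction-is-surjectivein-ktheory})$ already does most of the work, and then to propagate the result to arbitrary $x \in X_\Gamma$ using path-connectedness of the base. Part (ii) will then be deduced from part (i) by naturality of the Chern character.

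For part (i) at $b$, I would combine the surjection $j_b^!: \mathcal K(Y_\Gamma) \to \mathcal K(F_b)$ supplied by Theorem $(\ref{fiber-restriction-is-surjectivein-ktheory})$ with the fact (already recalled in the excerpt, via the cellular filtration lemma) that the forgetful homomorphism $\mathcal K(F_b) \to K(F_b)$ is an isomorphism for a generalized flag manifold. A diagram chase in the naturality square of the forgetful maps then yields surjectivity of the topological restriction $j_b^!: K(Y_\Gamma) \to K(F_b)$. To transfer this to a general $x \in X_\Gamma$, I would choose a path $\gamma$ in $X_\Gamma$ from $x$ to $b$ and invoke the homotopy lifting property of the smooth bundle $\xi_\Gamma$ to produce a homeomorphism $\tau_\gamma: F_x \to F_b$ (unique up to homotopy) together with a homotopy between $j_x$ and $j_b \circ \tau_\gamma$. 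Consequently $j_x^! = \tau_\gamma^! \circ j_b^!$, and since $\tau_\gamma^!$ is an isomorphism, $j_x^!$ inherits the surjectivity of $j_b^!$.

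For part (ii), I would appeal to the naturality of the Chern character. For any finite CW complex $Z$, the Chern character $\mathrm{ch}_Z: K(Z) \otimes \mathbb{Q} \to H^{\mathrm{even}}(Z, \mathbb{Q})$ is a natural ring isomorphism. Applied to the map $j_x$ and combined with part (i) tensored with $\mathbb{Q}$, this produces a commutative square in which the vertical arrows are the Chern character isomorphisms; chasing surjectivity across the diagram yields surjectivity of $j_x^*$ on $H^{\mathrm{even}}(F_x, \mathbb{Q})$. Since $F_x \cong K/H$ is a generalized complex flag manifold with cohomology concentrated in even degrees, $H^{\mathrm{odd}}(F_x, \mathbb{Q}) = 0$, so this suffices.

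No deep obstacle is anticipated. The principal technical point is keeping careful track of the distinction between holomorphic and topological $K$-theory when invoking Theorem $(\ref{fiber-restriction-is-surjectivein-ktheory})$, but the flag-manifold fiber makes the two agree, and the path-translation argument is a standard consequence of the connectedness of $X_\Gamma$ together with the fact that $\xi_\Gamma$ is a locally trivial fiber bundle.
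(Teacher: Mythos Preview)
Your proposal is correct and follows essentially the same line as the paper's proof: surjectivity at the base point $b=\Gamma K$ comes from Theorem~(\ref{fiber-restriction-is-surjectivein-ktheory}) together with the identification $\mathcal K(F)\cong K(F)$, it is propagated to an arbitrary fiber by path-transport in the bundle $\xi_\Gamma$, and part~(ii) is deduced from part~(i) via naturality of the Chern character (using that $H^{\mathrm{odd}}(F,\mathbb Q)=0$). The only cosmetic difference is that the paper builds the fiber comparison explicitly from finitely many holomorphic trivializations along the path (obtaining biholomorphisms $F\to F_{\sigma(t)}$), whereas you invoke the homotopy lifting property; both yield a map $\tau_\gamma:F_x\to F_b$ that is at least a homotopy equivalence with $j_x\simeq j_b\circ\tau_\gamma$, which is all that is needed for the $K$-theory and cohomology statements.
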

  \begin{proof} When $x=b:=\Gamma K=[e]\in X_\Gamma$ the trivial coset, where $e$ is the identity element of $G$, assertion $(i)$ is valid by Theorem $(\ref{fiber-restriction-is-surjectivein-ktheory})$. 
         For an aribtrary $x$,  we join $b$ to $x$ by a path $\sigma$. Then we claim that there exists a homotopy $f: F\times I\to Y_\Gamma$ such that, for each $t\in I, f$ defines a biholomorphism $f_t:F\times\{t\}\to F_{\sigma(t)}$ onto the fiber over $\sigma(t).$
         To see this, note that the image of $\sigma$ can be covered by {\em finitely many}  charts $(U_j,\psi_j)$ that trivialize the $F$-bundle $\xi_\Gamma$. 
         So, it suffices to establish the claim for any path joining two points $x_0,x_1$ both of which belong to a trivializing open set $U\subset X_\Gamma$. Then the above $f$ can be constructed inductively by suitably exploiting the transition functions of the bundle $\xi_{\Gamma}$.
                   Recall that $\pi:Y_\Gamma\to X_\Gamma$ denotes the bundle projection. 
          Let $\lambda:I\to U$ be a path from $x_0$ to $x_1$.   We then have a homotopy $f: F\times I\to \pi^{-1}(U)$ such that 
         $f_t, t\in I,$ is a biholomorphism. Explicitly, if $\psi: F\times U\to \pi^{-1}(U)$ is a biholomorphism, then we define
         $f(x,t)=\psi(x,\lambda(t))$.  This proves our claim. 
         It follows that $j_x^!: K(Y_\Gamma)\to K(F_x)$ is a surjection since $j_b^!$ is. Thus $(i)$ follows.   
         
         Assertion $(ii)$ follows from $(i)$ and the naturality of the Chern character.  Indeed for any $x\in X_{\Gamma}$, the Chern character $ch_{F_{x}}: K(F_{x})\otimes \mathbb Q\to H^*(F_{x},\mathbb Q)$ is an isomorphism, by what has been proved in $(i)$, 
         $j_{x}^!:K(Y_\Gamma)\otimes \mathbb Q\to K(F_{x})\otimes \mathbb Q$ is surjective. Since $H^*(j_{x},\mathbb Q)\circ ch_{Y_\Gamma}=ch_{F_{x}}\circ j_{x}^!$, it follows that 
        $j_{x}^*:H^*(Y_\Gamma,\mathbb Q)\to H^*(F_{x},\mathbb Q)$ is surjective.    \end{proof}

It is evident from the proof of $(i)$ in Proposition $(\ref{leray-hirsch})$, that for any $p\geq 0$, if a finite subset $S$ of $H^p(Y_{\Gamma},\mathbb{Q})$ restricts to a $\mathbb{Q}$-basis of $H^{p}(F_{b},\mathbb{Q})$, then $S$ restricts to a $\mathbb{Q}$-basis of $H^{p}(F_{x},\mathbb{Q})$ for any $x\in X_{\Gamma}$. This along with $(ii)$ in Proposition $(\ref{leray-hirsch})$ ensures that  Leray-Hirsch theorem \cite[\S7, Chapter V]{spanier} is applicable for the $K/H$-bundle with projection $Y_\Gamma\to X_\Gamma$. Thus we obtain the following isomorphism:
\[H^*(Y_\Gamma,\mathbb Q)\cong H^*(X_\Gamma,\mathbb Q)\otimes H^*(K/H,\mathbb{Q}).\]      
The following theorem now is immediate from  Hodge decomposition:
\begin{theorem} \label{hodge-decomposition}
For any $p,q\ge 0$, we have the isomorphism of complex vector spaces: 
\[H^{p,q}(Y_\Gamma)\cong \bigoplus_{r\geq 0} 
H^{p-r,q-r}(X_\Gamma)\otimes H^{r,r}(K/H).\]
\end{theorem}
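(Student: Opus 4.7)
The plan is to promote the rational Leray--Hirsch isomorphism already derived from Proposition~(\ref{leray-hirsch}) to a Hodge-refined isomorphism by choosing Leray--Hirsch classes of pure Hodge type $(r,r)$. The two structural facts that make this work are: the fiber $F=K/H$, being a generalized complex flag manifold, has $H^{r,s}(F)=0$ unless $r=s$; and, by Theorem~(\ref{fiber-restriction-is-surjectivein-ktheory}), every class in $H^*(F,\mathbb Q)$ can be lifted to $H^*(Y_\Gamma,\mathbb Q)$ through Chern classes of \emph{holomorphic} vector bundles $\mathcal V_\Gamma$ (Proposition~(\ref{algebraic-vector-bundles})).

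First, I would pick homogeneous Leray--Hirsch classes adapted to the Hodge decomposition. Using the isomorphism $\mathcal K(F)\cong RH/J$ and the Chern character on the flag manifold $F$, the ring $H^*(F,\mathbb Q)$ is spanned by polynomials in Chern classes of bundles $\mathcal V=K\times_H V$. By Proposition~(\ref{algebraic-vector-bundles}) each such $\mathcal V$ extends to a holomorphic vector bundle $\mathcal V_\Gamma\to Y_\Gamma$, hence its Chern classes $c_k(\mathcal V_\Gamma)\in H^{2k}(Y_\Gamma,\mathbb C)$ are of pure Hodge type $(k,k)$. Taking suitable polynomial expressions, I obtain a set $\{e_i\}\subset H^*(Y_\Gamma,\mathbb Q)$ such that each $e_i$ lies in $H^{r_i,r_i}(Y_\Gamma)$ and such that $\{j_x^*(e_i)\}$ is a $\mathbb Q$-basis of $H^*(F_x,\mathbb Q)$ restricting, on the complexification, to a homogeneous basis of $\bigoplus_r H^{r,r}(K/H)$.

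Next, I would invoke the Leray--Hirsch isomorphism derived right before the theorem, but now written as an isomorphism of $H^*(X_\Gamma,\mathbb C)$-modules using this preferred basis:
\[
H^*(Y_\Gamma,\mathbb C)\;\cong\;\bigoplus_i \pi^*H^*(X_\Gamma,\mathbb C)\cdot e_i.
\]
Because $\pi: Y_\Gamma\to X_\Gamma$ is holomorphic, $\pi^*$ preserves the Hodge bidegree, and cup product with $e_i\in H^{r_i,r_i}(Y_\Gamma)$ shifts bidegree by $(r_i,r_i)$. Therefore the summand $\pi^*H^{p-r_i,q-r_i}(X_\Gamma)\cdot e_i$ lies in $H^{p,q}(Y_\Gamma)$. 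Collecting the $e_i$ with $r_i=r$ and using that they restrict to a basis of $H^{r,r}(K/H)$ yields
\[
H^{p,q}(Y_\Gamma)\;\cong\;\bigoplus_{r\ge 0} H^{p-r,q-r}(X_\Gamma)\otimes H^{r,r}(K/H),
\]
as desired.

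The main obstacle is verifying the Hodge-type statement: namely, that the Leray--Hirsch classes can be chosen of pure bidegree $(r,r)$ and that they restrict to a basis of $H^{r,r}(K/H)$. This reduces to the two already-established ingredients---holomorphicity of the extensions $\mathcal V_\Gamma$ (so Chern classes are $(r,r)$-classes) and the surjectivity of $j^!:\mathcal K(Y_\Gamma)\to \mathcal K(F)$ combined with the Chern character being a rational isomorphism for the flag manifold $F$. With these in hand the Hodge-bidegree bookkeeping is automatic.
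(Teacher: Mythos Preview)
Your proposal is correct and is essentially the paper's own argument, only spelled out in more detail. The paper simply asserts that the theorem ``is immediate from Hodge decomposition'' once the Leray--Hirsch isomorphism $H^*(Y_\Gamma,\mathbb Q)\cong H^*(X_\Gamma,\mathbb Q)\otimes H^*(K/H,\mathbb Q)$ has been established; what lies behind that one-line claim is exactly what you make explicit---the Leray--Hirsch classes arise (via the Chern character and Proposition~(\ref{algebraic-vector-bundles})) as Chern classes of holomorphic bundles $\mathcal V_\Gamma$, hence sit in $H^{r,r}(Y_\Gamma)$, so the isomorphism respects Hodge bidegree.
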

For the remainder of the section we do not assume that $G_{\mathbb{C}}$ is simply connected.  We now obtain a description of the Picard group of $Y_\Gamma$. First we need the following lemma:
\begin{lemma}\label{simplecoefficients}
For any ring $R$ and any integer $q\geq 0$, the usual $R$-linear action of $\pi_{1}(X_{\Gamma}, [e])$ on $H^{q}(F, R)$ is trivial, where $F:=\pi^{-1}([e])$, $\pi$ is the projection $Y_{\Gamma}\rightarrow X_{\Gamma}$ and $e$ is the identity of $G$.
\end{lemma}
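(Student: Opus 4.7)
The plan is to compute the monodromy of the fiber bundle $\pi:Y_\Gamma\to X_\Gamma$ at the basepoint $[e]$ explicitly, using the $G$-equivariance of the pullback bundle $G/H\to G/K=X$, and then to show that each monodromy automorphism is homotopic to the identity as a self-map of $F$ using the connectedness of $K$.

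First I would identify the fiber. Since $\Gamma$ is a torsion-free discrete subgroup of $G$ and $K$ is compact, $\Gamma\cap K$ is a finite torsion-free subgroup of $K$, hence trivial. It follows that the natural map $K/H\to F\subseteq Y_\Gamma$, $kH\mapsto [kH]$, is a homeomorphism. Moreover $\pi_1(X_\Gamma,[e])$ is canonically identified with $\Gamma$ via the universal covering $X\to X_\Gamma$, since $X$ is contractible.

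Next I would describe the monodromy concretely. Given $\gamma\in\Gamma$, pick any continuous path $g:[0,1]\to G$ with $g(0)=e$ and $g(1)\in\gamma K$, and write $g(1)=\gamma k_0$ with $k_0\in K$. The loop in $X_\Gamma$ representing $\gamma$ is the projection of $t\mapsto g(t)K$. For any $kH\in F$, the path $t\mapsto [g(t)kH]$ in $Y_\Gamma$ lifts this loop (its projection to $X_\Gamma$ is $[g(t)kK]=[g(t)K]$) and starts at $[kH]$; its endpoint is $[g(1)kH]=[\gamma k_0kH]=[k_0kH]$. Hence, under the identification $F\cong K/H$, the monodromy automorphism $\mu_\gamma$ coincides with the left-translation map $L_{k_0}:kH\mapsto k_0kH$.

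Finally, since $K$ is connected, any path $\sigma:[0,1]\to K$ from $e$ to $k_0$ gives a homotopy $(kH,t)\mapsto \sigma(t)kH$ between $\mathrm{id}_F$ and $L_{k_0}$, showing that $\mu_\gamma$ is homotopic to the identity on $F$. Consequently the induced action of $\gamma$ on $H^q(F,R)$ is trivial for every ring $R$ and every $q\geq 0$. The only genuine step is the explicit computation of the monodromy, and the $G$-equivariant lift $t\mapsto g(t)kH$ makes this routine; I do not foresee any serious obstacle.
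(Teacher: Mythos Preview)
Your proof is correct and takes essentially the same approach as the paper, which likewise lifts the loop corresponding to $\gamma$ via a path $\alpha_\gamma$ in $G$ and reads off the monodromy from the explicit homotopy $(kH,t)\mapsto[\alpha_\gamma(t)kH]$. The only difference is that the paper simply chooses the path to end exactly at $\gamma$ (rather than at some $\gamma k_0\in\gamma K$), so the lifted path ends at $[\gamma kH]=[kH]$ and the monodromy is the identity on the nose---your final step invoking connectedness of $K$ is therefore unnecessary but harmless.
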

\begin{proof}
For any element $\gamma \in \Gamma$, choose a path $\alpha_{\gamma}$ in $G$ from $e$ to $\gamma$, where $e$ is the identity of $G$. Then $\alpha_{\gamma}$ gives a loop in $X_{\Gamma}$ based at $[e]$ which we denote by $\widetilde{\alpha_{\gamma}}$. One readily checks that the map $\gamma\mapsto [\widetilde{\alpha_{\gamma}}]$ is the usual identification of $\Gamma$ with $\pi_{1}(X_{\Gamma},[e])$ as groups. 

Now fix an element $\gamma\in\Gamma$ and an integer $q\geq 0$. The map $\epsilon: K\times I\rightarrow Y_{\Gamma}$ given by $\epsilon(k,t)=[\alpha_{\gamma}(t)k]$ for $k\in K$ and $t\in I$, descends to a map $K/H\times I\rightarrow Y_{\Gamma}$ which under the natural identification of $K/H$ with $F$ gives a map $\tilde{\epsilon}:F\times I\rightarrow Y_{\Gamma}$. Let $j_{0}:F\hookrightarrow F\times I$ be the inclusion given by $y\mapsto (y,0)$. One now checks that $\tilde{\epsilon}$ satisfies the following diagram:

\begin{center}
\begin{tikzpicture}
\matrix(m)[matrix of math nodes,
row sep=2em, column sep=5em,
text height=1.5ex, text depth=0.25ex]
{
F & Y_{\Gamma} \\
F\times I & X_{\Gamma} \\
};
\path[->]
(m-1-1) edge node[above] {$j$} (m-1-2);
\path[->]
(m-1-1) edge node[right] {$j_{0}$} (m-2-1);
\path[->]
(m-2-1) edge node[below] {$(y,t)\mapsto \widetilde{\alpha_{\gamma}}(t)$} (m-2-2);
\path[->]
(m-1-2) edge node[right] {$\pi$} (m-2-2);
\path[->]
(m-2-1) edge node[above] {$\tilde{\epsilon}$} (m-1-2);
\end{tikzpicture}
\end{center}

Finally note that $\tilde{\epsilon}$ restricts to the identity map: $F\times \{1\}\rightarrow F$ implying the triviality of the action of $\pi_{1}(X_{\Gamma},[e])$ on $H^{q}(F,R)$.  
\end{proof}
We are now ready to prove the following theorem:
\begin{theorem} \label{picard-ygamma}  Suppose that $H^{0,2}(X_\Gamma)=0.$
    Then there is a split short exact sequence:
    \[ 1\longrightarrow Pic(X_{\Gamma})\overset{\pi^{!}}\longrightarrow Pic(Y_{\Gamma})\overset{j^{!}}\longrightarrow Pic(F)\longrightarrow 1\]
    where $F=K/H\cong K_\mathbb C/Q$ and $j$ denotes the fiber inclusion $F\hookrightarrow Y_{\Gamma}$. Consequently 
    $Pic(Y_\Gamma)\cong Pic(X_\Gamma)\oplus Pic(F)
    \cong Pic(X_\Gamma)\oplus \mathbb Z^r$ where $r=\dim S/Z(K)$.
\end{theorem}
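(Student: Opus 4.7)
The plan is to combine the Leray spectral sequence of $\pi:Y_\Gamma\to X_\Gamma$ for the structure sheaf and for the constant sheaf $\mathbb Z$ with the exponential sheaf sequences on $X_\Gamma$ and $Y_\Gamma$, and then construct an explicit splitting using Proposition $(\ref{algebraic-vector-bundles})$. First I would compute the relevant low-degree cohomology of $Y_\Gamma$. Since the fiber $F=K_{\mathbb{C}}/Q$ is a generalized flag manifold, $H^q(F,\mathcal O_F)=0$ for all $q>0$ and $H^1(F,\mathbb Z)=0$. Local triviality of $\xi_\Gamma$ gives $R^q\pi_*\mathcal O_{Y_\Gamma}=0$ for $q>0$ and $\pi_*\mathcal O_{Y_\Gamma}=\mathcal O_{X_\Gamma}$, so $\pi^*:H^i(X_\Gamma,\mathcal O)\to H^i(Y_\Gamma,\mathcal O)$ is an isomorphism for every $i\geq 0$; in particular $H^2(Y_\Gamma,\mathcal O)=0$ by hypothesis. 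For integer coefficients, Lemma $(\ref{simplecoefficients})$ ensures that the Leray--Serre spectral sequence $E_2^{p,q}=H^p(X_\Gamma,H^q(F,\mathbb Z))\Rightarrow H^{p+q}(Y_\Gamma,\mathbb Z)$ has constant coefficients. The vanishing $H^1(F,\mathbb Z)=0$ kills the row $q=1$, and the surjectivity of $j^*:H^2(Y_\Gamma,\mathbb Z)\to H^2(F,\mathbb Z)$ from Remark $(\ref{picard-surj})$ (valid without requiring $G_{\mathbb C}$ to be simply connected) forces the only potentially nonzero differential out of $E_2^{0,2}$, namely $d_3:E_3^{0,2}\to E_3^{3,0}$, to vanish. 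This yields an isomorphism $\pi^*:H^1(X_\Gamma,\mathbb Z)\to H^1(Y_\Gamma,\mathbb Z)$ together with a short exact sequence
\[0\to H^2(X_\Gamma,\mathbb Z)\xrightarrow{\pi^*}H^2(Y_\Gamma,\mathbb Z)\xrightarrow{j^*}H^2(F,\mathbb Z)\to 0.\]

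Next I would stack the exponential sheaf sequences for $X_\Gamma$ and $Y_\Gamma$. Since $H^2(-,\mathcal O)=0$ for both spaces, they shorten to $0\to H^1(Z,\mathcal O)/H^1(Z,\mathbb Z)\to Pic(Z)\to H^2(Z,\mathbb Z)\to 0$ for $Z\in\{X_\Gamma,Y_\Gamma\}$. The pullback $\pi^*$ induces an isomorphism on the left-hand quotient (by the two $H^1$ isomorphisms above), while the pullback on $H^2(-,\mathbb Z)$ fits into the integral short exact sequence just established. An application of the snake lemma, together with the identification $Pic(F)\cong H^2(F,\mathbb Z)$ (which follows from the exponential sequence on $F$ and $H^q(F,\mathcal O)=0$ for $q>0$), then produces the desired short exact sequence of Picard groups.

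For the splitting I would lift characters. Every holomorphic line bundle on $F=K_{\mathbb C}/Q$ has the form $K_{\mathbb C}\times_Q\mathbb C_\chi$ for a character $\chi$ of $Q$ trivial on its unipotent radical, which is equivalent data to a character of $H$. Applying Proposition $(\ref{algebraic-vector-bundles})$ to the one-dimensional representation $\chi$ produces a holomorphic line bundle $\mathcal L_\chi$ on $Y_\Gamma$ whose restriction to $F$ recovers the prescribed bundle. Multiplicativity in $\chi$ makes $[\chi]\mapsto[\mathcal L_\chi]$ into a group homomorphism $Pic(F)\to Pic(Y_\Gamma)$ splitting $j^!$, and the rank assertion then follows from the standard description of $Pic(F)$ as the quotient of the character group of $H_{\mathbb C}$ by the characters extending to $K_{\mathbb C}$. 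The main obstacle in the argument is the integral Leray--Serre computation: a direct attempt to evaluate the transgression $d_3:E_3^{0,2}\to E_3^{3,0}$ would be awkward, so I circumvent it by invoking the surjectivity of $j^*$ from Remark $(\ref{picard-surj})$. Once that obstacle is cleared, the remaining steps are a diagram chase and the explicit character-lifting construction.
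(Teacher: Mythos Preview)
Your proposal is correct and follows essentially the same route as the paper: both combine the exponential sequences on $X_\Gamma$ and $Y_\Gamma$ with the Leray--Serre spectral sequence for $\mathbb Z$-coefficients (using Lemma~\ref{simplecoefficients} for constant coefficients and Remark~\ref{picard-surj} to kill the transgression), obtain the short exact sequence on $H^2(-,\mathbb Z)$, and then pass to Picard groups via a diagram chase. Your version is slightly more thorough in two places---you compute $R^q\pi_*\mathcal O_{Y_\Gamma}$ to get $H^{0,2}(Y_\Gamma)=0$ directly (the paper only asserts the $H^{0,1}$ isomorphism here and defers the general statement to Remark~\ref{Borelspectralsequence}), and you construct the splitting explicitly by lifting characters, whereas the paper tacitly relies on the freeness of $Pic(F)\cong\mathbb Z^r$---but these are refinements of the same argument rather than a different approach.
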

\begin{proof} One has the following exact sequence for any connected complex manifold $M$:
\begin{equation}\label{exp-exact-seq}
\cdots\rightarrow H^1(M,\mathbb Z)\rightarrow H^{0,1}(M)\rightarrow Pic(M)\overset{c_1}{\rightarrow }H^2(M,\mathbb Z)\rightarrow H^{0,2}(M)\rightarrow\cdots 
\end{equation}
arising from the exponential exact sequence $0\rightarrow \mathbb Z\rightarrow \mathcal O_{M}\rightarrow \mathcal O_{M}^*\to 1$ of sheaves.

As the projection $\pi:Y_\Gamma\to X_\Gamma$ induces isomorphism 
in fundamental groups, we obtain that $\pi^*:H^1(X_\Gamma,\mathbb Z)\to H^1(Y_\Gamma,\mathbb Z)$ is an isomorphism. Also $H^{0,1}(Y_\Gamma)\cong H^{0,1}(X_\Gamma)$. The commutativity of the following diagram:
\begin{center}
\begin{tikzpicture}
\matrix(m)[matrix of math nodes,
row sep=2em, column sep=2.5em,
text height=1.5ex, text depth=0.25ex]
{
H^{1}(X_{\Gamma},\mathbb{Z}) & H^{0,1}(X_{\Gamma}) \\
H^{1}(Y_{\Gamma},\mathbb{Z}) & H^{0,1}(Y_{\Gamma}) \\
};
\path[->]
(m-1-1) edge node[above] {} (m-1-2);
\path[->]
(m-1-1) edge node[right] {$\pi^{*}$} (m-2-1);
\path[->]
(m-2-1) edge node[below] {} (m-2-2);
\path[->]
(m-1-2) edge node[right] {$\pi^{*}$} (m-2-2);
\end{tikzpicture}
\end{center}
implies that $\pi^{!}:Pic(X_{\Gamma})\rightarrow Pic(Y_{\Gamma})$ restricts to an isomorphism $Pic^0(X_\Gamma)\cong Pic^0(Y_\Gamma)$ where $Pic^0(Y_\Gamma)$ is the kernel of $c_1:Pic(Y_\Gamma)\to H^2(Y_\Gamma,\mathbb Z)$. 

As already observed in Remark $(\ref{picard-surj})$, 
$j^{!}:Pic(Y_{\Gamma})\rightarrow Pic(F)$ and $j^{*}:H^{2}(Y_{\Gamma},\mathbb{Z})\rightarrow H^{2}(F,\mathbb{Z})$ are surjections.   
In view of Lemma $(\ref{simplecoefficients})$, the local coefficient system $\mathcal H^q(F,\mathbb Z)$ in the 
Leray-Serre spectral sequence of the bundle $F\overset{j}\rightarrow Y_{\Gamma}\overset{\pi}\rightarrow X_{\Gamma}$ is simple.  Now the vanishing of 
$H^1(F,\mathbb Z)$ and the surjectivity of 
$j^{*}:H^{2}(Y_{\Gamma},\mathbb{Z})\rightarrow H^{2}(F,\mathbb{Z})$  yields the 
exactness of the following sequence:
\[0 \rightarrow H^{2}(X_{\Gamma},\mathbb{Z})\overset{\pi^{*}}\rightarrow H^{2}(Y_{\Gamma},\mathbb{Z})\overset{j^{*}}\rightarrow H^{2}(F,\mathbb{Z})\rightarrow 0.\]
Finally, the long exact sequence $(\ref{exp-exact-seq})$ 
gives the exact sequence: $1\rightarrow Pic^0(X_\Gamma)\rightarrow Pic(X_\Gamma)\overset{c_{1}}{\rightarrow} H^2(X_\Gamma, \mathbb{Z})\rightarrow  0$
since, by hypothesis $H^{0,2}(X_{\Gamma})=0$.
From the isomorphism $Pic^{0}(X_{\Gamma})\cong Pic^{0}(Y_{\Gamma})$, 
as pointed out earlier in the proof and the fact $Pic(F)\cong H^{2}(F,\mathbb{Z})\cong \mathbb{Z}^{r}$, the theorem follows. 
\end{proof}
The following remark is an application of the Borel spectral sequence \cite[Appendix (II), \S 2]{hirzebruch} to the holomorphic bundle $\xi_{\Gamma}$ with projection $\pi:Y_{\Gamma}\rightarrow X_{\Gamma}$, fiber $K/H$ and holomorphic structure group $P$.
\begin{remark}\label{Borelspectralsequence}
We keep the above notations. Then the following are true:
$(i)$ For any $q\geq 1$, $\pi^{*}:H^{0,q}(X_{\Gamma})\rightarrow H^{0,q}(Y_{\Gamma})$ is an isomorphism; 
$(ii)$ There is an exact sequence $0\rightarrow H^{1,1}(X_{\Gamma})\overset{\pi^{*}}\rightarrow H^{1,1}(Y_{\Gamma})\overset{\iota^{*}}\rightarrow H^{1,1}(F)$; 
$(iii)$ If $q\geq 2$ and $H^{0,q-1}(X_{\Gamma})=0$ then $\pi^{*}:H^{1,q}(X_{\Gamma})\rightarrow H^{1,q}(Y_{\Gamma})$ is surjective and 
$(iv)$ If $q\geq 3$ and $H^{0,q-2}(X_{\Gamma})=0$ then $\pi^{*}:H^{1,q}(X_{\Gamma})\rightarrow H^{1,q}(Y_{\Gamma})$ is injective.
\end{remark}
We end this section with the following theorem which considers $Pic(X_{\Gamma})$.
Recall that the lattice $\Gamma\subset G$ is {\em irreducible} if there does not exist a finite cover of $X_\Gamma$ which is a product of two compact locally Hermitian symmetric spaces of positive dimensions. If $\Gamma$ is an irreducible lattice, then $r_\mathbb R(G)\ge 2$ implies  $[\Gamma,\Gamma]\subseteq \Gamma$ is a finite index subgroup whereas 
$r_{\mathbb{R}}(G)\geq 3$ implies $H^{0,1}(X_\Gamma)=0=H^{0,2}(X_\Gamma)$ and $H^{2}(X_\Gamma,\mathbb C)=H^{1,1}(X_\Gamma)\cong H^{1,1}(X_{\textrm{u}})$, see \cite[Ch. VII, \S 4, Corollary $(4.4)$ $(b)$]{borel-wallach}, where $X_{\textrm{u}}$ denotes the compact dual of $X$. 
\begin{theorem}\label{picard-xgamma}
Suppose that $\Gamma\subset G$ is an irreducible lattice and that $r_\mathbb R(G)\ge 3$. 
Let $n$ be the number of simple factors of $G$. 
Then  
$H^{2}(X_{\Gamma},\mathbb{Z})\cong \mathbb Z^n\oplus \Gamma/[\Gamma,\Gamma]$.   
Moreover, the first Chern class map $c_{1}:Pic(X_{\Gamma})\rightarrow H^{2}(X_{\Gamma},\mathbb{Z})$ is an isomorphism.  
\end{theorem}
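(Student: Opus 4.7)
Since $X_\Gamma$ is a $K(\Gamma,1)$, my plan is to first compute $H^2(X_\Gamma,\mathbb{Z})$ directly from $\Gamma$ via universal coefficients, and then to deduce the Picard group statement from the exponential sheaf sequence and the quoted Dolbeault vanishing. From the $K(\Gamma,1)$ property, $H_1(X_\Gamma,\mathbb{Z})\cong \Gamma/[\Gamma,\Gamma]$, which is a finite abelian group by the fact, recalled in the excerpt, that $[\Gamma,\Gamma]$ has finite index in $\Gamma$ whenever $\Gamma$ is irreducible and $r_{\mathbb{R}}(G)\geq 2$. The (non-canonically split) universal coefficient short exact sequence
\[ 0 \longrightarrow \mathrm{Ext}^1(H_1(X_\Gamma,\mathbb{Z}),\mathbb{Z}) \longrightarrow H^2(X_\Gamma,\mathbb{Z}) \longrightarrow \mathrm{Hom}(H_2(X_\Gamma,\mathbb{Z}),\mathbb{Z}) \longrightarrow 0 \]
therefore identifies its torsion subgroup with $\Gamma/[\Gamma,\Gamma]$ (using $\mathrm{Ext}^1(A,\mathbb{Z})\cong A$ for finite abelian $A$). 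The free part has rank $b_2(X_\Gamma)=\dim_{\mathbb{C}}H^2(X_\Gamma,\mathbb{C})$; under $r_{\mathbb{R}}(G)\geq 3$, the Borel--Wallach corollary cited in the excerpt identifies this with $\dim_{\mathbb{C}}H^{1,1}(X_{\textrm{u}})$. Decomposing $G$ according to its $n$ simple factors, $X_{\textrm{u}}$ becomes a product of $n$ irreducible compact Hermitian symmetric spaces, each of which has $H^{1,1}$ one-dimensional because the isotropy subgroup in each factor is reductive with one-dimensional center. Hence $b_2(X_\Gamma)=n$, giving $H^2(X_\Gamma,\mathbb{Z})\cong \mathbb{Z}^n\oplus \Gamma/[\Gamma,\Gamma]$.

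For the Picard group, the exponential sheaf sequence $0\to \mathbb{Z}\to \mathcal{O}_{X_\Gamma}\to \mathcal{O}_{X_\Gamma}^*\to 1$ yields the four-term piece
\[ H^{0,1}(X_\Gamma) \longrightarrow Pic(X_\Gamma) \xrightarrow{c_1} H^2(X_\Gamma,\mathbb{Z}) \longrightarrow H^{0,2}(X_\Gamma). \]
The same Borel--Wallach corollary, now applied in real rank at least three, forces both $H^{0,1}(X_\Gamma)$ and $H^{0,2}(X_\Gamma)$ to vanish, so the sequence collapses to the desired isomorphism $c_1\colon Pic(X_\Gamma)\xrightarrow{\sim} H^2(X_\Gamma,\mathbb{Z})$.

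The only non-formal input is the identification $b_2(X_\Gamma)=n$, which combines the rigidity-type isomorphism $H^{1,1}(X_\Gamma)\cong H^{1,1}(X_{\textrm{u}})$ valid in real rank at least three with the classical fact that each irreducible compact Hermitian symmetric space has Picard number one. Everything else is standard cohomological bookkeeping, so I expect no further obstacle.
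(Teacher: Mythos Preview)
Your proof is correct and follows essentially the same route as the paper's: both compute $H^2(X_\Gamma,\mathbb{Z})$ via the universal coefficient theorem using the finiteness of $\Gamma/[\Gamma,\Gamma]$ together with the Borel--Wallach identification $H^2(X_\Gamma,\mathbb{C})\cong H^{1,1}(X_{\mathrm{u}})\cong\mathbb{C}^n$, and then deduce the Picard statement from the exponential sequence and the vanishing of $H^{0,1}$ and $H^{0,2}$.
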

\begin{proof}
As noted above, our hypotheses on $r_\mathbb R(G)$ and the irreducibility of $\Gamma$ implies the vanishing of $H^{0,1}(X_\Gamma)$ and $H^{0,2}(X_\Gamma).$  Therefore $H_1(X_\Gamma,\mathbb Z)\cong \Gamma/[\Gamma,\Gamma]$ is finite.  Consequently, $H^1(X_\Gamma,\mathbb Z)=0.$
Since $G$ has $n$ simple factors (and has no compact factors by our blanket assumption on $G$), 
$X$ and its compact dual $X_{\textrm{u}}$, which is also a generalized complex flag manifold,  are a product of $n$ {\it irreducible} 
Hermitian symmetric spaces. Therefore $H^2(X_{\textrm{u}},\mathbb Z)\cong \mathbb Z^n.$  
It follows that $H^2(X_\Gamma,\mathbb C)=H^{1,1}(X_\Gamma)\cong 
H^{1,1}(X_{\textrm{u}})=H^{2}(X_{\textrm{u}},\mathbb{C})\cong 
\mathbb C^n$. 
Now a simple application of the universal coefficient theorem for cohomology shows that $H^{2}(X_{\Gamma},\mathbb{Z})\cong \mathbb{Z}^{n}\oplus \Gamma/[\Gamma,\Gamma]$.
The last assertion follows from the long exact sequence in cohomology associated with the 
exponential short exact sequence of sheaves on $X_{\Gamma}$.
\end{proof}
\section{The Hodge numbers $h^{p,q}(X_\Gamma)$ when $p\le 1$}
 We keep the notations, $T\subseteq K\subset G, X=G/K,$ and hypotheses of the previous section.  In addition, 
 in this section we assume that $G$ is simple and $r_\mathbb R (G)\ge 2$ unless otherwise stated. Note that 
as $G$ is simple, any lattice in $G$ is irreducible. 
 Recall that any maximal torus of $K$ contains $Z(K)$, the centre of $K$. 

A positive root system $\Delta^+\subset \Delta$ of $(\mathfrak g,\mathfrak t)$ is fixed 
as in \S 2, $\Delta_K$ (resp. $\Delta_{\textrm{n}}$) denotes the set of compact (resp. non-compact) roots, $\Delta^+_K=\Delta^+\cap \Delta_K$ and $ \Delta^+_{\textrm{n}}=\Delta^+\cap \Delta_{\textrm{n}}$.   We shall denote by $\Sigma\subset \Delta^+$
the set of simple roots.
As $G$ is simple, there is exactly one non-compact simple root.

It is well-known \cite[\S4, Chapter VII]{borel-wallach} that, 
$H^{p,q}(X_\Gamma) =0$ if $p\neq q$ and $p+q< r_\mathbb R(G)$ for any irreducible, torsion free and uniform
lattice $\Gamma$ in $G$.  
Parthasarathy in \cite[\S6]{parthasarathy-1980} has shown the vanishing of $H^{0,q}(X_\Gamma)$ for values of $q$ that exceed $r_{\mathbb{R}}(G)$, depending on the 
Cartan type of $X$. As only the `results of doing the exercise' are given in \cite{parthasarathy-1980}, we include the proofs here.   
Parthasarathy's approach was to show the non-existence of $\theta$-stable parabolic subalgebra of $\mathfrak{g}_{0}$ with prescribed $R^-(\mathfrak q)$ when $R^+(\mathfrak q)=0$.  We follow his approach and do a thorough analysis in all the Cartan types, also obtaining, in the same spirit, the vanishing results for 
$H^{1,q}(X_\Gamma).$

Let $x\in \mathfrak t_\mathbb R$.  Denote by $\mathfrak q=\mathfrak q_x\subseteq \mathfrak g$ the parabolic subalgebra of $\mathfrak{g}$ defined as: 
\[ \mathfrak q_x=\mathfrak t\oplus(\bigoplus_{\alpha\in \Delta, \alpha(x)\ge 0} \mathfrak g_\alpha).\]
Then $\mathfrak q_x$ is a $\theta$-stable parabolic subalgebra of $\mathfrak{g}_{0}$.   
If $x=0$, then $\mathfrak q_x=\mathfrak g$ and we shall leave out this case from 
further discussion.  Let $x\ne 0.$  Then $\mathfrak q_x$ is a proper parabolic subalgebra of $\mathfrak g$.

It is readily seen that 
$\mathfrak q_x=\mathfrak l_x\oplus \mathfrak u_x$ where $\mathfrak l_x=\mathfrak t\oplus (\bigoplus_{\alpha(x)=0}\mathfrak g_\alpha)\subseteq \mathfrak g$ equals the Levi 
subalgebra of $\mathfrak q_x$ and $\mathfrak u_x=\bigoplus_{\alpha(x)>0}\mathfrak g_\alpha$ is the nil radical of $\mathfrak q_{x}$.  Denote by $L_x\subseteq G$ the connected 
Lie subgroup of $G$ with Lie algebra $\mathfrak l_{x,0}=\mathfrak l_x\cap\overline{\mathfrak l_x}\subseteq \mathfrak g_0.$  
It turns out that any 
$\theta$-stable parabolic subalgebra of $\mathfrak{g}_{0}$ that contains $\mathfrak t$ arises as $\mathfrak q_x$ for some $x\in \mathfrak t_\mathbb R.$  

Let $k\in N_K(T)$, the normalizer of $T$ in $K$, and let $y=Ad(k)(x)\in \mathfrak t_\mathbb R$ for $x\in\mathfrak{t}_{\mathbb{R}}$.  Then $Ad(k)\mathfrak q_x=\mathfrak q_y=\mathfrak l_y\oplus \mathfrak u_y$
where $\mathfrak l_y=Ad(k)\mathfrak l_x, \mathfrak u_y=Ad(k)\mathfrak u_x.$
Note that since $\mathfrak p^\pm$ is a $K$-representation, $Ad(k)(\mathfrak p^\pm)=\mathfrak p^\pm$ and we have $\mathfrak u_y\cap\mathfrak p^\pm\cong u_x\cap \mathfrak p^\pm.$ Define $R^\pm(\mathfrak q_x):=\dim_\mathbb C(\mathfrak u_x\cap\mathfrak p^\pm)$.  Then $R^\pm(\mathfrak q_x)=R^\pm(\mathfrak q_y).$

Define $\mathcal R^+=\mathcal R^+(\mathfrak q_x)\subseteq \Delta_{\textrm{n}}^+$ as follows:  $\mathcal R^+:=\{\alpha\in\Delta_{\textrm{n}}^+\mid \alpha(x)>0\}$. $\mathcal R^-$ is defined analogously.
Thus $|\mathcal R^\pm|=R^\pm(\mathfrak q_x)=
\dim \mathfrak u_x\cap \mathfrak p^\pm$. It is well known from the works of R. Parthasarathy, D. Vogan and G. Zuckerman (see \cite{parthasarathy-1978}, \cite{vogan} and \cite{vogan-zuckerman}),  that for any $\theta$-stable parabolic subalgebra $\mathfrak{q}$ of $\mathfrak{g}_{0}$ containing $\mathfrak{t}$ there is an irreducible unitary representation $\mathcal{A}_{\mathfrak{q}}$ of $G$ (whose space of smooth and $K$-finite vectors would be denoted by $A_{\mathfrak{q}}$) and conversely for any irreducible unitary representation $V_{\pi}$ of $G$ with $H^{k}(\mathfrak{g}_{0},K,V_{\pi,K}^{\infty})\neq 0$ for some $k$, there exists a $\theta$-stable parabolic subalgebra $\mathfrak{q}$ of $\mathfrak{g}_{0}$ containing $\mathfrak{t}$ such that $V_{\pi}\cong \mathcal{A}_{\mathfrak{q}}$ as irreducible unitary representations of $G$. Furthermore up to unitary equivalence $\mathcal{A}_{\mathfrak{q}}$ depends only on the $K$-conjugacy class of $\mathfrak{q}$.

We shall write $\mathfrak q, \mathfrak l$, $L$ and $\mathfrak u$ for $\mathfrak q_x, \mathfrak l_x$, $L_x$ and $\mathfrak u_x$ respectively for $x\in\mathfrak{t}_{\mathbb{R}}$. The space $L/L\cap K$ is Hermitian symmetric
and we have the following isomorphism for all $p,q\geq 0$:
\begin{equation}\label{gkcohomology-hodgetype}
 H^{p,q}(\mathfrak g_0,K,A_{\mathfrak q})\cong H^{p-R^+({\mathfrak{q})},q-R^-({\mathfrak{q}})}(\mathfrak l_0, K\cap L,
 \mathbb C).
\end{equation}
Denote by $L_{\textrm{u}}$ the maximal compact subgroup of $L_\textrm{c}\subseteq G_\mathbb C$. 
Here $L_\textrm{c}$ denotes the connected Lie subgroup of $G_\mathbb C$ corresponding to the Lie subalgebra 
$\mathfrak l\subset \mathfrak g.$
Thus $Y_{\mathfrak q}:=L_{\textrm{u}}/L_{\textrm{u}}\cap K$ is the compact dual of $L/L\cap K$.   As $L/L\cap K$ is a Hermitian symmetric domain, so is $Y_{\mathfrak q}$ and $Y_{\mathfrak q}$ is also a generalized complex flag manifold.
Since for any $p,q\geq 0$, $H^{p,q}(\mathfrak l_0, L\cap K,\mathbb C)\cong H^{p,q}(Y_{\mathfrak q})$, the isomorphism $(\ref{gkcohomology-hodgetype})$ yields the following for any $p,q\geq 0$:
\begin{equation}\label{gk-cohomology-hermitiansymmetricspace}
    H^{p,q}(\mathfrak g_0,K,A_{\mathfrak q})
    \cong  H^{p-R^+({\mathfrak{q}}),q-R^-({\mathfrak{q}})}(Y_{\mathfrak q}).
\end{equation}

Note that if there is no $\theta$-stable parabolic subalgebra $\mathfrak{q}$ of $\mathfrak{g}_{0}$ containing $\mathfrak{t}$ such that $R^{+}(\mathfrak{q})=1=R^{-}(\mathfrak{q})$, then there cannot be a non-trivial irreducible and unitary representation $V_{\pi}$ of $G$ with $H^{1,1}(\mathfrak{g}_{0},\mathfrak{k}_{0},V^{\infty}_{\pi,K})\neq 0$. Indeed if $H^{1,1}(\mathfrak{g}_{0},\mathfrak{k}_{0},V^{\infty}_{\pi,K})\neq 0$, then by the above discussions, $V^{\infty}_{\pi,K}=A_\mathfrak{q}$ for some $\theta$-stable parabolic subalgebra $\mathfrak{q}$ of $\mathfrak{g}_{0}$ containing $\mathfrak{t}$. Thus by $(\ref{gk-cohomology-hermitiansymmetricspace})$, $H^{1,1}(\mathfrak{g}_{0},\mathfrak{k}_{0},V^{\infty}_{\pi,K})=H^{1,1}(\mathfrak{g}_{0},\mathfrak{k}_{0},A_\mathfrak{q})=H^{1-R^{+}(\mathfrak{q}),1-R^{-}(\mathfrak{q})}(Y_{\mathfrak{q}})$. Therefore it is forced that $R^{+}(\mathfrak{q})=0=R^{-}(\mathfrak{q})$ as $Y_{\mathfrak{q}}$ is also a generalized complex flag manifold. Hence again by $(\ref{gk-cohomology-hermitiansymmetricspace})$, $H^{0,0}(\mathfrak{g}_{0},\mathfrak{k}_{0},V^{\infty}_{\pi,K})=H^{0,0}(\mathfrak{g}_{0},\mathfrak{k}_{0},A_\mathfrak{q})=H^{0,0}(Y_{\mathfrak{q}})\neq 0$. Equation $(\ref{hodgematsushima})$ now shows that $H^{0,0}(X_{\Gamma})$ has complex dimension at least $2$ which is a contradiction as $X_{\Gamma}$ is connected. Hence as $G$ is simple, $(\ref{hodgematsushima})$ again forces $H^{1,1}(X_{\Gamma})\cong \mathbb{C}$.

Therefore we conclude the following remark from the above discussions and the equations  $(\ref{hodgematsushima})$ and $(\ref{gk-cohomology-hermitiansymmetricspace})$:
\begin{remark}\label{vanishingcohomology}
$(i)$ Let $p$ be any positive integer. If there is no $\theta$-stable parabolic subalgebra $\mathfrak{q}$ containing $\mathfrak{t}$ with 
$(R^{+}(\mathfrak{q}),R^{-}
(\mathfrak{q}))=(0,p)$, then $H^{p,0}(X_{\Gamma})=0=H^{0,p}(X_{\Gamma})$.

$(ii)$ Let $p\geq 2$ be an integer. If for any $\theta$-stable parabolic subalgebra $\mathfrak{q}$ containing $\mathfrak{t}$ we have  
$(R^{+}(\mathfrak{q}),R^{-}(\mathfrak{q}))\notin \{(1,p),(0,p-1)\}$  
then $H^{1,p}(X_{\Gamma})=0=H^{p,1}(X_{\Gamma})$.

$(iii)$ If there is no $\theta$-stable parabolic subalgebra $\mathfrak{q}$ containing $\mathfrak{t}$  with $(R^{+}(\mathfrak{q}),R^{-}(\mathfrak{q}))=(1,1)$, then $H^{1,1}(X_{\Gamma})\cong \mathbb{C}$.
\end{remark}

Note that parts $(i)$ and $(ii)$ in the above remark do not require $G$ to be simple, whereas it is essential for part $(iii)$.

Our strategy of proof would be to exploit the above remark. Since 
$R^\pm (\mathfrak q_x)=R^\pm(\mathfrak q_y)$ if $x,y\in \mathfrak t_\mathbb R$ are in the same orbit of the Weyl group $W_K=W(K,T)$, we need only verify our assertion for $x$ in a  conveniently chosen (closed) $W_K$-chamber of $\mathfrak t_\mathbb R.$ 
The proof will be by case consideration depending on the simple group $G$, which are 
tabulated below for the convenience of the reader. (See \cite[\S2, Chapter X]{helgason}). We shall follow the  description of the root system and 
the labeling of simple roots as in Bourbaki \cite{bourbaki}. 

We shall specify in each case which simple root is non-compact,
the set of positive compact and non compact roots and also the set of simple roots.  We will also 
specify the Weyl group of $K$ and describe its action on $\mathfrak t_\mathbb R.$
Both $\mathfrak t_\mathbb R$ and its 
dual $\mathfrak t_\mathbb R^*$, of which $\Sigma$ is a basis, will be described as subspaces of the Euclidean space $\mathbb R^N$.  
\[
\begin{array}{|c|c|c|c|c|c|}
  \hline
  Type   & G ~(or~ \mathfrak g_0) & K & G_\mathbb C & r_\mathbb R(G) & dim_{\mathbb{C}}(X) \\
  \hline\hline
   {\bf AIII}  & SU(m,n)&S(U(m)\times U(n)) & SL(m+n,\mathbb C) & \min\{m,n\} & mn\\
   \hline
   {\bf BDI} & SO_0(2, q) (q\geq 2)& SO(2)\times SO(q) & SO(2+q,\mathbb{C})& 2 & q\\
   \hline
   {\bf CI} & Sp(n,\mathbb R) & U(n) & Sp(n,\mathbb{C}) & n & n(n+1)/2\\
   \hline
   {\bf DIII}& SO^*(2n) & U(n)& SO(2n,\mathbb C) & [\frac{n}{2}] & n(n-1)/2\\
   \hline
   {\bf EIII} & \mathfrak e_{(6,-14)} & Spin(10)\cdot U(1) & E_{6,\mathbb C} & 2 & 16\\
   \hline
   {\bf EVII}& \mathfrak e_{(7,-25)} & E_6\cdot U(1) &E_{7,\mathbb C}& 3 & 27\\
   \hline
\end{array}
\]
\begin{center}
    {\bf Table 1:} Irreducible Hermitian symmetric pairs $(G,K)$ of non-compact type.
\end{center}
\subsection{Type AIII} We will first consider the case when the $r_{\mathbb{R}}(G)$ is at least $2$.  The case when $r_{\mathbb{R}}(G)=1$, although easier, will be treated separately.
Let $2\le m\le n$. $G=SU(m,n), K=S(U(m)\times U(n)).$  \\
$\Sigma=\{\psi_j:=e_j-e_{j+1}\mid 1\le j<m+n\}$; $\psi_m=e_m-e_{m+1}$ is the non-compact simple root. 
$\Delta^+_K=\{e_i-e_j\mid 1\le i<j\le m\}\cup\{e_i-e_j\mid m<i<j\le m+n\},$\\
$\Delta^+_{\textrm{n}}=\{e_i-e_j\mid 1\le i\le m<j\le m+n\}.$\\
$\mathfrak t_\mathbb R\subset \mathbb R^{m+n}$ defined by the equation 
$\sum_{1\le j\le m+n} x_j=0.$ \\
$W_K= S_m\times S_n\subset S_{m+n}$ acts on $\mathfrak t_\mathbb R$ by permuting 
the first $m$ coordinates and the last $n$ coordinates.  

Let $x\in \mathfrak t_\mathbb R\subset \mathbb R^{m+n}, x\ne 0.$  We may (and do) assume that 
$x_1\le \cdots \le x_m$ and $x_{m+1}\le  \cdots \le x_{m+n}.$

Suppose that $R^+=R^+(\mathfrak q_x)=0.$ Then $x_i\le x_j$ if $1\le i\le m<j\le m+n$ and so $x_1\le \cdots\le x_m\le x_{m+1}\le \cdots\le x_{m+n}.$
Now $-(e_i-e_j)\in\mathcal R^-$  if and only if $i\le m<j\le m+n$ and $x_i<x_j.$ 
If $x_1=x_{m+n},$ then $\sum_{1\le j\le m+n} x_j=0$ implies that $x=0$, contrary to our choice.  
If $x_1=x_m$ we set $r=0$ otherwise 
let $r\le m-1$ be the largest positive integer such that $x_r<x_m$.  Similarly, if $x_m=x_{m+n},$
we set $t=0$; otherwise let $t$ be the largest number such that $1\le t\le n$ and 
$x_m<x_{m+n+1-t}.$ Note that both $r$ and $t$ cannot vanish.  
Then $e_j-e_i\in \mathcal R^-$ if and only if  $m+n+1-t\le j\le m+n$ or $1\le i\le r.$ 
Therefore $R^-=rn+(m-r)t.$ 

Conversely, if $1\le r< m$ and $1\le t\le n$, then $(R^+,R^-)=(0,rn+(m-r)t)$ is 
realized as $(R^+(\mathfrak q_a),R^-(\mathfrak q_a))$ where $$a=(\underbrace{-t,\cdots, -t}_{\textrm{$r$ terms}}, \underbrace{0,\cdots, 0}_{\textrm{$m+n-t-r$ terms}}, \underbrace{r,\cdots,r}_{\textrm{$t$ terms}}).$$

Also $(R^+(\mathfrak q_b),R^-(\mathfrak q_b))=(0, mt), (R^+(\mathfrak q_c),R^-(\mathfrak q_c))=(0, nr)$ when 
$$b=(\underbrace{-t,\cdots, -t}_{\textrm{$m+n-t$ terms}}, \underbrace{m+n-t,\cdots,m+n-t}_{\textrm{$t$ terms}})$$ and 
$$c=(\underbrace{-(m+n-r),\cdots, -(m+n-r)}_{\textrm{$r$ terms}},  \underbrace{r,\cdots,r}_{\textrm{$m+n-r$ terms}})$$
respectively.

Next assume that $R^+=R^+(\mathfrak q_x)=1$, where $x_1\le \cdots\le x_m, x_{m+1}\le 
\cdots \le x_{m+n}$. Then $\mathcal R^+$ equals $\{e_m-e_{m+1}\}$ and    
we must have $x_{m-1}\le x_{m+1}<x_{m}\le x_{m+2}$. Thus we have  
\[x_1\le \cdots\le x_{m-1}\le x_{m+1}<x_m \le x_{m+2}\le \cdots\le x_{m+n}.\]
Now $x_i-x_j<0$ for $1\le i<m, m+2\le j\le m+n$ and so $e_j-e_i\in \mathcal R^-$.  
If $x_{1}=x_{m+1}$ set $r=0$; otherwise set $r$ to be the largest integer in the range $1\leq r\leq m-1$ such that $x_{r} < x_{m+1}$. Similarly if $x_{m}=x_{m+n}$ set $s=m+1$ otherwise set $s$ to be the smallest integer in the range $m+2\leq s\leq m+n$ such that $x_{m} < x_{s}$. From the above observations it follows that $R^{-}(\mathfrak{q}_{x})=(m-1)(n-1)+r+(m+n-s+1)=mn+r-s+2$. 
Moreover, given 
$r,s$ with $0\leq r\leq m-1, m+1\leq s\le m+n,$ we can find a suitable $b=(b_j)\in \mathfrak t_\mathbb R$ so that  
$R^+(\mathfrak q_b)=1, R^-(\mathfrak q_b)=mn+r-s+2.$ For example, we may take 
$b$ to be as follows:
\[b_j=\begin{cases}
    -2(m+n-s+1),& ~\textrm{if~} 1\le j\le r,\\
    -(m+n-s+1),&~\textrm{if~} r+1\le j\le m+1, j\ne m,\\
     0,& ~\textrm{if~} j=m,m+2,m+3,...,s-1,\\
     m+r,& ~\textrm{if~} s\le j\le m+n.\\
\end{cases}\]

Now we turn to the rank $1$ case, namely, $G=SU(1,n).$  We assume that 
$n>1$ since, when $G=SU(1,1)$, $X_\Gamma$ is a compact Riemann surface, which 
is well understood.

We have $\Sigma=\{e_i-e_{i+1}\mid 1\le i\le n\}$ with $\psi_1=e_1-e_2$ being the non-compact simple root. 
Also, 
$\Delta_K^+=\{e_i-e_j\mid 2\le i<j\le n+1\}, \Delta_{\textrm{n}}^+=\{e_1-e_j\mid 1<j\le n+1\}$ and $W_K\cong S_n$ which acts on $\mathfrak t_\mathbb R$ by 
permuting the coordinates $x_2,\ldots, x_{n+1}$ of $x=(x_1,\ldots, x_{n+1})\in \mathfrak t_\mathbb R.$ As in the higher rank case we have $\sum_{1\le j\le n+1}x_j=0$.   Let $x\in \mathfrak t_\mathbb R$ be non-zero. We assume that $x_2\le \cdots\le x_{n+1}.$

Suppose that $R^+(\mathfrak q_x)=0.$   Then $x_1\le x_j$ for $2\le j\le n+1.$ 
If equality holds for all $2\le j\le n+1$, then $0=\sum_{1\le j\le n+1}x_j=(n+1)x_1$, which implies that $x=0,$ contrary to our hypothesis. 
Suppose that $s$ is the least number such that $2\le s\le n+1$ and $x_1<x_s.$  Then $-(e_1-e_j)\in \mathcal R^-(\mathfrak q_x)$ for $s\le j\le n+1$ and so,
$R^-(\mathfrak q_x)=n+2-s.$
It is easily seen that, given any $s$, $2\le s\le n+1$, there exists an $x\neq 0\in 
\mathfrak t_\mathbb R$ such that $R^+(\mathfrak q_x)=0, R^-(\mathfrak q_x)=n+2-s.$
{\em Therefore, when $R^+=0$,  we have $1\le R^-\le n$.}

Suppose that $R^+(\mathfrak q_x)=1.$  Then $\mathcal R^+(\mathfrak q_x)=\{e_1-e_{2}\}$ and $x_2<x_1\le x_j~\forall j\ge 3.$  If $x_1=x_j~\forall j\ge 3$, then $R^-=0.$   Otherwise, there exists a least integer $s\ge 3$ such that $x_1<x_s.$
Now $-(e_1-e_j)\in \mathcal R^-(\mathfrak q_x)$ for $s\le j\le n+1$ and so  
$R^-(\mathfrak q_x)=n+2-s.$   It is 
easily seen that any integer between $0$ and $n-1$ is realized as $R^-(\mathfrak q_x)$ with $R^+(\mathfrak q_x)=1$ for some $x\neq 0\in\mathfrak{t}_{\mathbb{R}}$. {\em Thus, when $R^+=1,$ we have $0\le R^-\le n-1.$ }
\subsection{Type BDI(rank=2) (rank $K$ is even)} Let $G=SO_0(2, 2m-2),m\ge 4.$  \\
$\Sigma=\{ \psi_j=e_j-e_{j+1}\mid 1\le j<m\}\cup\{\psi_m=e_{m-1}+e_m\}$ with non-compact simple root being $\psi_1=e_1-e_2,$  \\
$\Delta_K^+=\{(e_i-e_j), (e_i+e_j)\mid 2\le j\le m\}$,\\
$\Delta_{\textrm{n}}^+=\{e_1\pm e_j\mid 2\le j\le m\}$.
$W_K\cong S_{m-1}\rtimes A$ where $A\subset \{1,-1\}^{m-1}$ where $a=(a_j)\in A $ if $\prod a_j=1.$  The group $W_K$ acts on $\mathfrak t_\mathbb R\cong \mathbb R^m$ by permuting the coordinates $x_2,\ldots, x_m$ and changing the signs of even number of them where 
$x=(x_1,\ldots, x_m)\in \mathfrak t_\mathbb R$.

Assume $m\geq 3$. Let $x=(x_1,\ldots, x_m)\in \mathfrak t_\mathbb R$, $x\ne 0$  and $\mathfrak q=\mathfrak q_x$.  
The Weyl group $W_K=W(K,T)$ acts on $\mathfrak t_\mathbb R$ by permuting the coordinates $x_2,\ldots, x_m$ and changing the signs of even number of them.  So we may (and do) assume that $x_2\ge \cdots \ge x_{m-1}\ge|x_m|.$ 
Since $x\ne 0$, we have $x_1\ne 0$ or $x_2>0$.
Suppose that $R^+=0.$  Then $x_1+x_j\le 0, x_1-x_j\le 0$ for all $j\ge 2$. So, $x_1\le -x_2.$  If $x_1<-x_2,$ then $\mathcal R^-=\Delta^-_{\textrm{n}}$ and we have $R^-=2m-2$.

Suppose that $x_1=-x_2.$   Then $x_1<0$ and so $x_1-x_j<0$ for $2\le j<m,$ and $x_1-|x_m|<0$.
Therefore $-(e_1-e_j)\in \mathcal R^-$ for $2\le j<m$ and $-(e_1-\varepsilon e_m)\in \mathcal R^-$ where $\varepsilon\in \{1,-1\}$ is such that $\varepsilon x_m\ge 0.$ Also, if $x_1<-x_j$, then $-(e_1+e_j)\in \mathcal R^-.$  Let $J=\{3\le j\le m\mid x_1<-|x_j|\}.$ Then $0\le |J|\le m-2$ and  $-(e_1\pm e_j)\in \mathcal R^-$ for $j\in J$. 

{\em We conclude that, if $R^+=0$, then $R^-\in\{m-1, m, \cdots, 2m-2\}.$}   
All these values of $R^{-}$ are realized for suitable choices of non-zero $x\in\mathfrak{t}_{\mathbb{R}}$.

Now assume that $m\geq 4$. Suppose that $R^+(\mathfrak q)=1.$ Then, as $x_1+x_2\ge x_1\pm x_j~\forall j>1$, we must have 
$x_1+x_2>0, $ and it follows that $x_1-x_2< 0$,  $x_1\pm x_j\le 0~\forall j>2$.  In particular, $x_2>x_3\ge 0.$  
Suppose that $x_1=0$. Then $x_j=0~\forall j\ge 3$  (as otherwise $x_1+x_j>0$ for some $j>2$ which would imply that $R^+\ge 2$).  It follows that $\mathcal R^-=\{e_2-e_1\}$ and we have $R^+=R^-=1$.  

Suppose that $x_1<0.$ Then $x_1-|x_j|<0~\forall j\ge 2$ and so $-(e_1-e_j)\in \mathcal R^-$ for $2\le j<m$ and $-(e_1-\varepsilon e_m)\in \mathcal R^-$  where $\varepsilon\in\{1,-1\}$ is such that $\varepsilon x_m\ge 0.$
Let $J=\{j\ge 3\mid x_1<-|x_j|\}.$  Then $-(e_1\pm e_j)\in \mathcal R^-$ for $j\in J, j\ne m$.
So $R^-=(m-1)+c$ where 
$c=|J|\le m-2.$ Any value of $c\le m-2$ can be realized by a suitable choice of non-zero $x\in\mathfrak{t}_{\mathbb{R}}$.
{\it Thus, if $R^+=1,$ then $R^-$ takes the following values: $1, m-1+c, 0\le c\le m-2.$}
\subsection{Type BDI(rank=2), (rank $K$ is odd)} Let $G=SO_0(2,2m-1), m\ge 2.$ \\
$\Sigma=\{\psi_i=e_i-e_{i+1}\mid 1\le i\le m-1\}\cup\{\psi_m=e_m\}$, the non-compact simple root being $\psi_1.$ \\
$\Delta_K^+=\{e_i\pm e_j\mid 2\le i<j\le m\}\cup\{e_j\mid 2\le j\le m\}.$\\
$\Delta^+_{\textrm{n}}=\{e_1\pm e_j\mid 2\le j\le m\}\cup\{e_1\}.$\\
$W_K=S_{m-1}\rtimes \mathbb Z_2^{m-1}$ acts on $\mathfrak t_\mathbb R\cong \mathbb R^n$  
by permuting $x_2,\ldots, x_m,$ and changing the $x_j$ to $-x_j$ for $2\le j\le m$, where $x=(x_1,\ldots, x_m)\in \mathfrak t_\mathbb R$. 

Let $x=(x_1,\ldots,x_m)\in \mathfrak t_\mathbb R, x\ne 0.$  
We assume that $x_2\ge  \cdots\ge x_m\ge 0.$

Suppose that $R^+(\mathfrak q_x)=0.$ Then $x_1\le 0$ and $x_1+x_j\le 0, x_1-x_j\le 0$ for $2\le j\le m$.

Suppose that $x_1=0$.  Then $x_2>0$ as otherwise $x=0$, contrary to our assumption that $x\ne 0$.
But then $e_1+e_2\in \mathcal R^+$, a contradiction.  This shows that $x_1<0.$
Now $-(e_1-e_j)\in \mathcal R^-~\forall j\ge 2$ and $-e_1\in \mathcal R^-$.  
Let $c=|\{ 2\le j\le m\mid -x_1=x_j\}|.$  Then $-(e_1+e_k)\in \mathcal R^-$ if $c+1<k\le m.$
Note that $c$ can take any value between $0$ and $m-1$ for suitable choices of non-zero $x\in\mathfrak{t}_{\mathbb{R}}$.   
Then  $R^-=2m-1-c.$  

{\em Thus $R^+=0$ implies that $m\le R^-\le 2m-1$ and any such value is attained for suitable choices of non-zero $x\in\mathfrak{t}_{\mathbb{R}}$}. 

Suppose that $R^+(\mathfrak q_x)=1.$   Suppose that $x_1=0$. Then $x_2\ge x_j~\forall j\ge 2$ 
implies that $\mathcal R^+=\{e_1+e_2\}.$ Also, $x_j=0$ for $j\ge 3$ since otherwise $e_1+e_3\in 
\mathcal R^+.$
Now $-(e_1+e_2)\in \mathcal R^-$, $-(e_1-e_2)\notin \mathcal R^-$ and $-(e_1\pm e_k)\notin \mathcal R^-$ for $3\le k\le m.$  So $R^-=1.$

Suppose that $x_1>0.$ Then $e_1\in \mathcal R^+$.    Since $x_2\ge 0,$ then 
$e_1+e_2\in \mathcal R^+$, forcing $R^+\ge 2$.  So this case cannot occur.

Next, suppose that $x_1<0$.  Then $\mathcal R^+=\{e_1+e_2\}$ and we must have $x_2>x_j$, $x_j\le -x_1 ~\forall j\ge 3$ and $x_2>0$. 
First assume $m\geq 3$. Let $3\le c\le m$ be the smallest number such that $x_1+x_c<0$ if exists.
As we have $-(e_1-e_j)\in \mathcal R^-~\forall ~ 2\le j\le m$ and $-e_1\in \mathcal R^-$, $R^-=1+(m-1)+(m-c+1)=2m-c+1$. If such a $c$ does not exists, then $R^-=1+(m-1)=m$. Moreover for every $c\ge 3$, there is a non-zero $x$ in $\mathfrak{t}_{\mathbb{R}}$ such that $R^-=2m-c+1$, whereas if $m=2$, then $R^-=m$.   
{\em Thus, if $R^+=1$, then $R^-\in \{q\mid q=1\textrm{~or~} m\le q\le 2m-2\}$. All these 
values are realized for suitable choices of non-zero $x\in\mathfrak{t}_{\mathbb{R}}$.}
\subsection{Type CI} Let $G=Sp(n,\mathbb R), K=U(n)\subset G, n\ge 2.$ \\ 
$\Sigma=\{\psi_i=e_i-e_{i+1}\mid 1\le i\le n\}\cup \{\psi_n=2e_n\}$, with non-compact simple root being $\psi_n,$\\
$\Delta_K^+=\{e_i-e_j\mid 1\le i<j\le n\},$\\
$\Delta_{\textrm{n}}^+=\{e_i+e_j\mid 1\le i<j\le n\}\cup\{2e_i\mid 1\le i\le n\}$. 
$W_K=S_n$ which acts on $\mathfrak t_\mathbb R\cong\mathbb R^n$ by permuting the coordinates. 

Let $x\in \mathfrak t_\mathbb R$ be non-zero. 
We assume, as we may, that $x_1\le \cdots\le x_{n}.$

Suppose that $R^+=R^+(\mathfrak q)=0.$  If $x_n>0, $ then $\psi_n=2e_n\in \mathcal R^+.$
So $x_n\le 0$.  If $x_n<0,$ then $x_j<0~\forall j, 1\le j\le n.$  It follows that 
   $x_i+x_j<0$ for any $i,j, 1\le i< j\le n.$  Therefore $R^-=n(n+1)/2.$

Now assume that $x_n= 0.$ Since $x\ne 0$, $x_1<0$.  Let $r<n$ be the largest positive integer such that $x_r<0.$ 
Then $-(e_i+e_j)\in \mathcal R^-$ when $1\le i\le r, 1\le j\le n$ and $i<j$. Moreover 
$-(e_i+e_j)\notin \mathcal R^-$ if $r<i< j\le n$. So we obtain that $R^-=r(2n-r+1)/2$.

{\em Thus $R^+=0$ implies that $R^-=r(2n-r+1)/2$, where $r$ is the largest integer in the range $1\leq r\leq n-1$ such that $x_{r}<0$.}

Next, suppose that $R^+=R^+(\mathfrak q_x)=1.$  Then $x_n>0$, as otherwise $x_j\le 0 ~\forall j$ 
which forces $R^+=0.$  This implies that $2e_n\in \mathcal R^+.$  Hence $2e_j\notin \mathcal R^+$
and so $x_j\le 0~\forall j<n.$  Now $x_{n-1}\le -x_n<0$ as otherwise $e_{n-1}+e_n\in 
\mathcal R^+.$   It follows that, for any $j<n$,  $x_j<0$ and so $-2e_j\in \mathcal R^-~\forall j<n.$ 
Also $-(e_i+e_j)\in \mathcal R^-$ for $1\le i<j<n$.  Finally, let $1\le s<n$ be the largest integer 
such that $x_s<-x_n$ if exists. Then $-(e_j+e_n)\in \mathcal R^-$ if and only if $1\le j\le s.$
Therefore $R^-=s+n(n-1)/2$. If $x_j\ge -x_n$ for all $1\leq j\leq n-1$, then $R^-=n(n-1)/2$. 
\subsection{ Type DIII} Let $G=SO^*(2n), K=U(n).$  We assume that $n\ge 4.$\\
$\Sigma=\{\psi_i=e_i-e_{i-1}\mid 1\le i<n\}\cup\{\psi_n=e_{n-1}+e_n\}$, where $\psi_n$ is the non-compact simple root.\\
$\Delta^+_K=\{e_i-e_j\mid 1\le i<j\le n\}$,\\
$\Delta^+_{\textrm{n}}=\{e_i+e_j\mid 1\le i<j\le n\}$,\\
$W_K=S_n$ which acts on $\mathfrak t_\mathbb R\cong \mathbb R^n$ by permuting the coordinates.

Let $x\in \mathfrak t_\mathbb R$, $x\ne 0$.  We assume that $x_1\le \cdots\le x_n.$

$(a)$ Suppose that $R^+=R^+(\mathfrak q_x)=0.$ 

{\em Case 1:} \underline{Suppose that $x_n< 0$.}  Then $R^-=n(n-1)/2$.  

{\em Case 2:}  \underline{Suppose that $x_n= 0$.}  
Let $1\leq s\le n$ be the smallest integer such that $x_{s}= 0$. Note that $s\geq 2$ as $x\neq 0$.  %Note that 
Thus $x_j=0$ for $s\leq j\le n$. Therefore 
$\mathcal R^-=\{-(e_i+e_j)\mid 1\le i\le s-1<j\le n\}\cup\{-(e_i+e_j)\mid 1\le i<j\le s-1\}$ and we have $R^-=(s-1)(n-s+1)+(s-1)(s-2)/2=(s-1)(2n-s)/2$. When $s=n,$ we obtain $R^-=n(n-1)/2$.

{\em Case 3:} \underline{Suppose that $x_n>0.$}  Then $x_{n-1}\le -x_n<0.$  If $x_{n-1}<-x_n$, then $R^-=|\Delta^-_{\textrm{n}}|$. Suppose that $x_{n-1}=-x_n.$
Let $1\leq t\leq n-1$ be the smallest integer 
such that $x_{t}=-x_{n}$.   
Then $\mathcal R^-=\{-(e_i+e_n)\mid 1\leq i\le t-1\}\cup\{-(e_i+e_j)\mid 1\le i<j\le n-1\}$ forcing $R^-=(t-1)+(n-1)(n-2)/2.$
Thus, when $R^+(\mathfrak q_x)=0, x\ne 0$,
$R^-\in \{(s-1)(2n-s)/2\mid 1\le s\le n\}\cup\{ (t-1)+(n-1)(n-2)/2\mid 1\le t\le n-1\}$. All these values of $R^-$ are realized for suitable choices of non-zero $x\in\mathfrak{t}_{\mathbb{R}}$.

$(b)$ Next, suppose that $R^+(\mathfrak q_x)=1.$  Then $\mathcal R^+=\{e_{n-1}+e_n\}$,
$x_n>0,$ and $x_j+x_n\le 0$ for all $j,  1\leq j\leq n-2$. 

Note that, if for all $j$, $1\leq j\leq n-2$, we have $x_{j}+x_{n}<0$, then as $x_{n-1}\leq x_{n}$, $\mathcal{R}^{-}=\{-(e_{i}+e_{j})\mid 1\leq i<j\leq n-2\}\cup \{-(e_{i}+e_{n})\mid 1\leq i\leq n-2\}\cup \{-(e_{i}+e_{n-1})\mid 1\leq i\leq n-2\}$. Hence in this case $R^{-}(\mathfrak{q}_{x})=(n-2)(n+1)/2$. 
Otherwise let $s$ be the smallest integer such that $1\leq s\leq n-2$ and $x_{s}=-x_{n}$.

Now if $x_{n-1}<x_n,$
 then $\mathcal R^-=\{-(e_i+e_j)\mid 1\le i<j\le n-1\}\cup \{-(e_i+e_n)\mid 1\le i\le s-1\}$ and so $R^-= (s-1)+(n-1)(n-2)/2.$ Finally if $x_n=x_{n-1},$  then $\mathcal R^-=\{ -(e_i+e_j)\mid 1\le i<j\le n-2\}
 \cup \{ -(e_i+e_k)\mid 1\le i\le s-1, k=n-1,n\}$ and we have $R^-=2(s-1)+(n-2)(n-3)/2$.
 
 In summary, if $R^+(\mathfrak q_x)=1$, then
   $R^-\in \{(s-1)+(n-1)(n-2)/2\mid 1\leq s\leq n-2\}\cup \{2(s-1)+(n-2)(n-3)/2\mid 1\leq s\leq n-2\}\cup\{(n-2)(n+1)/2\}$ and all these values of $R^{-}$ are realized for suitable choices of non-zero $x\in\mathfrak{t}_{\mathbb{R}}$.
\subsection{ Type EIII}  Here $G\subset E_{6,\mathbb{C}}$, where $E_{6,\mathbb{C}}$ is the simply connected complex Lie group with Lie algebra $\mathfrak{g}=\mathfrak{e}_{6}$, the exceptional complex simple Lie algebra (having rank 6) and  
$Lie(G)=\mathfrak g_0=\mathfrak e_{(6,-14)}$. Moreover the maximal compact $K\cong Spin(10)\cdot U(1)$ of $G$ has Lie algebra $\mathfrak k_0\cong 
\mathfrak{so}(10)\oplus \mathbb R.$\\   $\mathfrak t_\mathbb R^* $ is the subspace 
of $\mathbb R^8$ orthogonal to the vectors $e_6-e_7, e_7+e_8.$ Thus 
$x=(x_1,\cdots, x_8)\in \mathfrak t_\mathbb R$ if and only $x_6=x_7=-x_8$.\\
$\Sigma =\{\psi_i\mid 1\le i\le 6\}$, with the non-compact simple root being $\psi_1$, where 
\[\psi_1=(1/2)(e_1-e_2-e_3-e_4-e_5-e_6-e_7+e_8),\]
\[\psi_2=e_2+e_1, 
\psi_3=e_2-e_1, \psi_4=e_3-e_2, \psi_5=e_4-e_3, \psi_6=e_5-e_4.\]  
$\Delta_K^+=\{e_j\pm e_i\mid 1\le i<j\le 5\}.$\\
$\Delta^+_{\textrm{n}}=\{(1/2)(-e_6-e_7+e_8+
\sum_{1\le j\le 5}(-1)^{a_j}e_j)\mid \sum_{1\leq j\leq 5}a_j\equiv 0\!\pmod 2\}
$.

Let $A\subset \{0,1\}^5$ be the set consisting of elements $a=(a_1,\ldots, a_5)
$ with $\sum_{1\le j\le 5}a_j$ even. The set $\Delta^+_{\textrm{n}}$ is in bijective correspondence with $A$ where $a\in A$ corresponds to the root $\gamma_a$ defined as 
\[\gamma_a:=(1/2)(-e_6-e_7+e_8+\sum_{1\le j\le 5} (-1)^{a_j}e_j).\]
For $x\in \mathfrak t_\mathbb R$, define 
\[ a(x):=\sum_{1\le j\le 5}(-1)^{a_j}x_j.\]
Thus $\gamma_a(x)=(-3x_6+a(x))/2$. 

We also view $A$ as a {\it subgroup} of $\mathbb Z_2^5$ in the obvious manner. 
Then $W_K=A\rtimes S_5$ where $S_5$ acts by permuting the coordinates $x_1, x_2, x_3,x_4, x_5$ of $x\in 
\mathfrak t_\mathbb R$ and $a\in A$ replaces the coordinate $x_i$ of $x$ by $(-1)^{a_i} x_i$ for $ 1\le i\le 5.$ 

 Fix $x\in \mathfrak t_\mathbb R$, $x\ne 0$.  Replacing by an element in its $W_K$-orbit
we may (and do) assume that $x_1\ge x_2\ge x_3\ge x_4\ge |x_5|\geq 0.$ 
If $a\in A,$ then 
\begin{equation}\label{e6-negative-root-criterion}
\gamma_a\in \mathcal R^- (\mathfrak q_x)\iff \gamma_a(x)<0\iff   
3x_6>a(x).
\end{equation}
\begin{lemma}\label{e6-hodge0q}
Let $x\in \mathfrak t_\mathbb R, x\ne 0.$  Suppose that $R^+(\mathfrak q_x)=0$.
Then  
\begin{equation}R^-(\mathfrak q_x)\in \{8,11,12,13,14,15,16\}.
\end{equation} Moreover each of these values are 
attained for suitable choices of non-zero $x$ in $\mathfrak{t}_{\mathbb{R}}$.
\end{lemma}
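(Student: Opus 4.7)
The approach is to translate the hypothesis into a counting problem using the criterion (\ref{e6-negative-root-criterion}). Since $R^+(\mathfrak q_x)=0$ is equivalent to $a(x)\le 3x_6$ for every $a\in A$ and $|A|=16$, we have
\[R^-(\mathfrak q_x)=16-c(x),\qquad c(x):=|\{a\in A\mid a(x)=3x_6\}|,\]
so it suffices to determine the possible values of $c(x)$ for $x\ne 0$. After normalizing $x$ by the action of $W_K=A\rtimes S_5$ to satisfy $x_1\ge x_2\ge x_3\ge x_4\ge |x_5|\ge 0$, a direct maximization of $\sum_j(-1)^{a_j}x_j$ subject to $\sum a_j\equiv 0\pmod 2$ shows that $\max_{a\in A}a(x)=x_1+x_2+x_3+x_4+x_5$ in either sign case for $x_5$. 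The hypothesis thus produces two alternatives: either $3x_6>\max_a a(x)$, giving $c=0$ and $R^-=16$, or $3x_6=\max_a a(x)$, in which case $c$ equals the number of parity-even sign patterns attaining the maximum.

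To count these maximizers I would split on the sign of $x_5$. When $x_5\ge 0$ every $x_j$ is nonnegative and the maximum is attained precisely when $a_j=0$ for every $j$ with $x_j>0$; letting $k$ denote the number of zero coordinates among $x_1,\ldots,x_5$, the remaining $a_j$ are free, and the parity constraint then forces $c=1$ for $k\in\{0,1\}$ and $c=2^{k-1}$ for $k\ge 2$. Under the normalization $k\in\{0,1,2,3,4\}$ (the case $k=5$ with $x\ne 0$ falls into the strict-inequality alternative), yielding $c\in\{1,1,2,4,8\}$.

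When $x_5<0$ the unconstrained optimum has sign pattern $(+,+,+,+,-)$, of odd parity, and even parity is restored by a single sign-flip at a coordinate of minimum absolute value; a three-flip alternative strictly decreases the value because $|x_5|>0$. Hence $c=\ell$, where $\ell:=|\{j\mid |x_j|=|x_5|\}|\in\{1,2,3,4,5\}$. Combining both cases gives $c\in\{0,1,2,3,4,5,8\}$, i.e.\ $R^-\in\{16,15,14,13,12,11,8\}$, as claimed.

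For realizability I would exhibit an explicit $x$ for each value: e.g.\ $(0,0,0,0,0,1,1,-1)$ for $R^-=16$, $(1,1,1,1,1,\tfrac{5}{3},\tfrac{5}{3},-\tfrac{5}{3})$ for $15$, $(1,1,1,0,0,1,1,-1)$ for $14$, $(2,2,1,1,-1,\tfrac{5}{3},\tfrac{5}{3},-\tfrac{5}{3})$ for $13$, $(2,1,1,1,-1,\tfrac{4}{3},\tfrac{4}{3},-\tfrac{4}{3})$ for $12$, $(1,1,1,1,-1,1,1,-1)$ for $11$, and $(1,0,0,0,0,\tfrac{1}{3},\tfrac{1}{3},-\tfrac{1}{3})$ for $8$. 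The main obstacle I anticipate is the parity bookkeeping in the $x_5<0$ case, where one must verify that no multi-flip configuration can tie the single-flip maximum (which would spuriously inflate $c$ beyond $\ell$). The apparent gap between $c=5$ and $c=8$ in the admissible list is genuine: Case (b) saturates at $\ell=5$ while Case (a) first produces $c=8$ only when four of the first five coordinates vanish, and no configuration yields $c\in\{6,7\}$.
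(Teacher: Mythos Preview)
Your argument is correct and follows essentially the same route as the paper: normalize by $W_K$, reduce to counting the parity-even sign patterns $a\in A$ achieving $a(x)=3x_6$, and split on the sign of $x_5$. Your parameters $k$ (number of zero coordinates) and $\ell$ (multiplicity of the minimum absolute value) are simple reparametrizations of the paper's case-by-case enumeration, your single-flip/three-flip reasoning is a clean way to justify that no multi-flip tie occurs when $x_5<0$, and your explicit realizing vectors are a welcome addition where the paper only asserts realizability.
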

\begin{proof} Let $x\in \mathfrak t_\mathbb  R$ be non-zero.  Assume that $R^+(\mathfrak q_x)=0.$  Thus $\gamma_a(x)\le 0$ or equivalently $a(x)\leq 3x_6$ for all $a\in A$.
 We may (and do) assume that $x_1\ge x_2\ge x_3\ge x_4\ge |x_5|.$  We shall denote by $\sigma(x) =\max_{a\in A} a(x)=\sum_{1\le j\le 5} x_j.$
Note that $\gamma_a=\gamma_b\in \Delta^+_{\textrm{n}}$ if and only if $a=b.$ On the other hand If $\sigma(x)<3x_6$, then $R^-(\mathfrak{q}_{x})=2^4=16$. Thus from now on assume that $\sigma(x)=3x_6$. 
 Define $s\ge 0$ to be the number of positive numbers among $x_1,x_2,x_3, x_4.$ Note that as $x\neq 0$, $s\geq 1$. 
 
 {\it Case 1:} \underline{Assume that $x_5\geq 0$}. First assume that $x_5=0$.  
 If $c\in A$, then $c(x)=\sigma(x)$ if and only if $c_j=0$ whenever $1\le j\le 4$ and $x_j>0$.  
 The number of distinct $c\in A$ such that $c(x)=\sigma(x)$ equals $2^{4-s}$. However if $x_5>0$, then $s=4$ and the number of distinct $c\in A$ such that $c(x)=\sigma(x)$ is $2^{4-4}=1$ and the only such $c$ is zero. Thus in this case $R^-(\mathfrak{q}_{x})\in\{16-2^{4-s}\mid 1\leq s\leq 4\}=\{8,12,14,15\}.$ 

{\it Case 2:} \underline {Assume that $x_5<0$}.  Then $s=4$.   
 Let $t=|\{a\in A\mid a(x)=\sigma(x)\}|$. If $x_1\geq x_2\geq x_3\geq x_4>-x_5$, then $t=1$, if $x_1\geq x_2\geq x_3>x_4=-x_5$, then $t=2$, if $x_1\geq x_2>x_3=x_4=-x_5$, then $t=3$, if $x_1>x_2=x_3=x_4=-x_5$, then $t=4$ and finally if $x_1=x_2=x_3=x_4=-x_5$, then $t=5$. Thus $R^-(\mathfrak{q}_{x})\in\{11,12,13,14,15\}$.

 It is clear from the above proof that each value of $R^-$ as asserted in the statement of this lemma are in fact realized for suitable choices of non-zero $x$ in $\mathfrak{t}_{\mathbb{R}}$. 
\end{proof} 
 Next we consider $\theta$-stable parabolic subalgebras $\mathfrak q_x$ with $R^+(\mathfrak q_x)=1$ and determine the possible values of $R^-(\mathfrak q_x).$
\begin{lemma} \label{e6-hodge1q}   Let $x\in\mathfrak{t}_{\mathbb{R}}$, $x\neq 0$. Suppose that $R^+(\mathfrak q_x)=1.$
   Then:
   \[R^-(\mathfrak q_x)\in\{5,9,11, 12,13,14,15\}.\] Moreover each of these values are attained for suitable choices of non-zero $x$ in $\mathfrak{t}_{\mathbb{R}}$.
\end{lemma}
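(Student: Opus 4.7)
The plan follows the strategy of Lemma~\ref{e6-hodge0q}. Using the $W_K = A\rtimes S_5$-action I normalize $x\in\mathfrak t_\mathbb R$ to satisfy $x_1\ge x_2\ge x_3\ge x_4\ge |x_5|\ge 0$. The bijection $a\leftrightarrow S=\{j:a_j=1\}$ identifies the indexing set $A$ with the even-cardinality subsets of $\{1,\ldots,5\}$; writing $\lambda_S:=\sum_{j\in S}x_j$, $\sigma:=\sigma(x)$, and $\tau:=(\sigma-3x_6)/2$, the identity $a(x)=\sigma-2\lambda_S$ combined with \eqref{e6-negative-root-criterion} yields
\[R^+(\mathfrak q_x)=|\{S\text{ even}:\lambda_S<\tau\}|,\qquad R^-(\mathfrak q_x)=|\{S\text{ even}:\lambda_S>\tau\}|.\]

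Under this normalization, every non-empty even subset satisfies $\lambda_S\ge x_4+x_5\ge 0$, while $\lambda_\emptyset=0$. The hypothesis $R^+(\mathfrak q_x)=1$ therefore forces $\tau>0$, forces $\emptyset$ to be the \emph{unique} minimizer of $\lambda_S$ (which requires $x_4+x_5>0$), and forces $\tau\le x_4+x_5$. Splitting on $\tau$: if $\tau\in(0,x_4+x_5)$ then $R^-=15$; if $\tau=x_4+x_5$ then $R^-=15-m$ where $m:=|\{S\ne\emptyset,~S\text{ even}:\lambda_S=x_4+x_5\}|$.

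The key step is to determine the possible values of $m$. No $4$-element subset contributes: $\lambda_S=\sigma-x_k=x_4+x_5$ forces $x_k=x_1+x_2+x_3\ge x_1\ge x_k$, hence $x_2=x_3=0$, and then $x_4+x_5\le 0$, contradicting $x_4+x_5>0$. So $m$ counts only $2$-element subsets. Let $A_1:=\{i\le 4:x_i=x_4\}$ and $a_1:=|A_1|\ge 1$. If $x_4>x_5$, every pair $\{i,j\}\subseteq\{1,\ldots,4\}$ has $\lambda\ge 2x_4>x_4+x_5$, so the contributing pairs are precisely $\{i,5\}$ with $i\in A_1$, giving $m=a_1\in\{1,2,3,4\}$. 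If $x_4=x_5$ (which forces $x_5>0$), every pair contained in $A_1\cup\{5\}$ sums to $2x_4=x_4+x_5$, and no other pair does; hence $m=\binom{a_1+1}{2}\in\{1,3,6,10\}$. Taking the union, $m\in\{1,2,3,4,6,10\}$, so $R^-\in\{11,12,13,14,9,5\}$ in the second subcase; combining with $R^-=15$ from the first, $R^-(\mathfrak q_x)\in\{5,9,11,12,13,14,15\}$.

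Each value is realizable by prescribing an appropriate equality pattern among $x_1,\ldots,x_5$ and choosing $x_6$ so that $\tau=x_4+x_5$ (or $\tau$ strictly less than $x_4+x_5$ for $R^-=15$); for example, $x_1>x_2>x_3>x_4>x_5>0$ gives $R^-=14$, $x_1>x_2>x_3=x_4>x_5>0$ gives $R^-=13$, $x_1>x_2=x_3=x_4=x_5>0$ gives $R^-=9$, and $x_1=\cdots=x_5>0$ gives $R^-=5$. The main obstacle lies in the enumeration of $m$: recognizing that the second-smallest value of $\lambda_S$ is $x_4+x_5$, ruling out $4$-element contributions, and counting multiplicities in the two subcases $x_4>x_5$ versus $x_4=x_5$.
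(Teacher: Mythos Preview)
Your argument is correct and follows essentially the same strategy as the paper's own proof: normalize via $W_K$, identify $\gamma_0$ (your $S=\emptyset$) as the unique element of $\mathcal R^+$, and then count how many of the remaining fifteen roots lie on the boundary $\gamma_a(x)=0$ versus strictly in $\mathcal R^-$. Your substitution $\lambda_S=\sum_{j\in S}x_j$ and threshold $\tau=(\sigma-3x_6)/2$ is a tidy repackaging that unifies what the paper does by a direct case split on the sign of $x_5$: the paper treats $x_5\ge 0$ and $x_5<0$ separately and enumerates seven equality patterns in the first case, whereas your formulation absorbs both into the single dichotomy $x_4>x_5$ versus $x_4=x_5$, yielding $m=a_1$ or $m=\binom{a_1+1}{2}$ respectively. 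The content is the same---both proofs ultimately list the multiplicities $1,2,3,4,6,10$ at the second-smallest level---but your organization is cleaner and the exclusion of $4$-element subsets from the count (which the paper leaves implicit in its enumeration) is made explicit.
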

\begin{proof}  We keep the notations of Lemma $(\ref{e6-hodge0q})$.
Let $x\in \mathfrak t_\mathbb R$, non-zero be such that $R^+(\mathfrak q_x)=1$. 
As before, we assume that $x_1\ge x_2\ge x_3\ge x_4\ge |x_5|.$ 
It is readily seen that $\gamma_0=
(1/2)(e_1+e_2+e_3+e_4+e_5-e_6-e_7+e_8)\in \mathcal R^+$. 
Note that $\gamma_0(x)=(-3x_6+\sigma(x))/2$.  

    Since $R^+=1$, if $a\in A$ is non-zero, then $\gamma_a(x)\le 0$.    Thus 
    $a(x)\le 3x_6<\sigma(x)~\forall a\ne 0$.  It follows that $x_j\ne -x_5$
for $j<5$, and, in particular, $x_4>0$ if $x_5= 0$. We have $\mathcal R^-=\{-\gamma_a\mid a(x)<3x_6, a\in A\}$. 

    {\it Case 1:} \underline{Suppose $x_5\ge 0.$}   If $a(x)<3x_6$ for all non-zero $a\in A$, then $R^-=15.$
    
    Suppose that $a(x)=3x_6$ for some non-zero $a\in A$. Note that if $x_1\ge x_2\ge x_3>x_4\ge x_5$, then $t=1$, if $x_1\ge x_2>x_3=x_4>x_5$, then $t=2$, if $x_1\geq x_2>x_3=x_4=x_5>0$, then $t=3$, if $x_1>x_2=x_3=x_4>x_5$, then $t=3$, if $x_1>x_2=x_3=x_4=x_5>0$, then $t=6$, if $x_1=x_2=x_3=x_4>x_5$, then $t=4$ and finally if $x_1=x_2=x_3=x_4=x_5>0$, then $t=10$. Therefore $R^-\in\{5,9,11,12,13,14,15\}$ and clearly all these values are realized for suitable choices of non-zero $x$ in $\mathfrak{t}_{\mathbb{R}}$.

{\it Case 2:} \underline{Suppose that $x_5<0.$}  If $a=(0,0,0,1,1)$, then $\gamma_a(x)\ge \gamma_b(x)$ 
     for all $b\in A, b\ne 0.$  If $\gamma_a(x)<0,$ then $R^-=15$.  
     So assume that 
     $\gamma_a(x)=0$.  Then the number of $b\in A$ for which $\gamma_b(x)=\gamma_a(x)=0$ 
     equals $t:=|\{j\le 4\mid x_j=x_4\}|.$  For the remaining non-zero $c\in A$ we have 
     $\gamma_c\in \mathcal R^-.$ Thus 
     $R^-=15-t\in \{11, 12, 13,14\}$. 
     Again, each of the values $11, 12, 13, 14,$ is readily seen to be realized for suitable non-zero $x\in\mathfrak{t}_{\mathbb{R}}$, completing the proof.
     \end{proof}
     \subsection{Type EVII} Let $G\subset E_{7,\mathbb C}$ be the real form of the simply connected complex Lie group corresponding to the real form $\mathfrak g_0:=\mathfrak e_{(7,-25)}$ of the exceptional complex Lie algebra $ \mathfrak e_7.$ 

$K$ is locally isomorphic to $U(1)\times E_6$;  $\mathfrak k_0\cong \mathbb R\times \mathfrak e_{(6,-78)}$.   \\
$\mathfrak t_\mathbb R^*\subset \mathbb R^8$ is the space orthogonal to $e_7+e_8.$ Thus $\mathfrak t_\mathbb R\subset \mathbb R^8$ consists of vectors $x=(x_1,\ldots, x_7, x_8)$ where $x_7=-x_8$.

$\Sigma=\{\psi_i\mid 1\le i\le 7\}$ where 
$\psi_1= (1/2)(e_1+e_8)-(1/2)(e_2+\cdots +e_7)$, 
 $\psi_2=e_1+e_2$, $\psi_3=e_2-e_1$, $\psi_4=e_3-e_2$, $\psi_5=e_4-e_5$, $\psi_6=e_5-e_4$ and $\psi_7=e_6-e_5$. 
The non-compact simple root is $\psi_7=e_6-e_5.$

$\Delta_K^+=\{e_j\pm e_i\mid 1\le i<j\le 5\}\cup \{(1/2)(-e_6-e_7+e_8+\sum_{1\le j\le 5} (-1)^{a_j}e_j)\mid a\in A\},$  where $A\subset \{0,1\}^5$ consists of $a=(a_1,\cdots, a_5)$
with $\sum a_j$ being even.

$W_K\cong W(E_6)$ is generated by the simple reflections $s_i:=s_{\psi_i}, 1\le i\le 6.$ 
The subgroup $W_1$ generated by $s_j$, for $2\le j\le 6$, is isomorphic to the Weyl group of $Spin(10)\cong S_5\rtimes A$ and acts 
on $\mathfrak t_\mathbb R$ by permuting the coordinates $x_1,\ldots, x_5$ of $x$ and changing the signs of even number of these coordinates.   We shall only use the action of the subgroup $W_1$
on $\mathfrak t_\mathbb R.$

$\Delta^+_{\textrm{n}}=\{(1/2)(e_6-e_7+e_8-\sum_{1\le j\le 5}(-1)^{c_j}e_j)\mid c\in A \}\cup\{e_6\pm e_j\mid 1\le j\le 5\}\cup \{e_8-e_7\}.$ We set $\gamma_j^\pm:=e_6\pm e_j$ and 
 $\beta_c:= (1/2)(e_6-e_7+e_8-\sum_{1\le j\le 5}(-1)^{c_j}e_j)$, $c\in A$.
We shall refer to $\{\pm \gamma_j^\pm\}$ as roots of {\em type} $\mathcal D$ and to $\{\pm\beta_c\}_{c\in A}$
as roots of {\em type} $\mathcal E.$  Denote by $\mathcal D^-=\{-\gamma^\pm_j\}$ (resp. $\mathcal E^-=\{-\beta_c\}$) the set 
of negative (non-compact) roots of type $\mathcal D$ (resp. $\mathcal E$).

We set $a(x):=\sum_{1\le j\le 5} (-1)^{a_j}x_j$ for $x\in \mathfrak t_{\mathbb R}$ and  
$a\in A$. Define $\sigma(x):=\max_{a\in A}a(x)$ and $\tau(x):=-\min_{a\in A} a(x).$ Note that $\sigma(x)=\tau(x)$ if $x_j=0$ for some $j\le 5$.
 \begin{lemma} \label{e7-hodge0}
Let $x\in \mathfrak t_\mathbb R$ and $x\ne 0.$
   Suppose that $R^+(\mathfrak{q}_x)=0$. Then:
   \[R^-(\mathfrak q_x)\in \{17, 21, 22,23,24,25,26, 27\}.\]
   Moreover each of these values are attained for suitable choices of non-zero $x\in\mathfrak{t}_{\mathbb{R}}$.
\end{lemma}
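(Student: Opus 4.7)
My plan is to proceed in the spirit of Lemma~\ref{e6-hodge0q}, normalizing $x$ via the subgroup $W_1\cong A\rtimes S_5\subset W_K$ so that $x_1\ge x_2\ge x_3\ge x_4\ge |x_5|$, and then counting equalities among the $27$ defining inequalities of the condition $R^+(\mathfrak q_x)=0$. The new feature compared with the EIII computation is that $\Delta_{\textrm{n}}^+$ now splits into three families: the $10$ type-$\mathcal D$ roots $\gamma_j^\pm=e_6\pm e_j$, the $16$ type-$\mathcal E$ roots $\beta_c$, and the distinguished root $e_8-e_7$. The condition $R^+(\mathfrak q_x)=0$ translates to $x_6\pm x_j\le 0$ for $1\le j\le 5$, $x_6+2x_8\le c(x)$ for all $c\in A$ (equivalently $x_6+2x_8\le -\tau(x)$), and $2x_8\le 0$, and $R^-(\mathfrak q_x)$ equals the number of these that are strict. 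Writing $G$ for the number of equalities among the $\gamma_j^\pm$ conditions, $m$ for the number of $c\in A$ with $c(x)=x_6+2x_8$, and $\varepsilon=1$ if $x_8<0$ and $\varepsilon=0$ if $x_8=0$, I obtain the key identity
\[
R^-(\mathfrak q_x)=26+\varepsilon-G-m.
\]

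The next stage is to enumerate the possible values of $G$, $m$, and $\varepsilon$. A direct analysis of the equations $x_6=\mp x_j$, organized by whether $x_6<-x_1$, $x_6=-x_1>0$, or $x_1=0$, yields $G\in\{0,1,2,3,4,5,10\}$: specifically $G=k=|\{j:x_j=x_1\}|$ in Case~B2 ($x_6=-x_1>0$), with the subcase $x_5=-x_1$ forcing $k=4$ and $G=5$, and the extreme value $G=10$ arising only in the degenerate Case~A ($x_1=\cdots=x_5=x_6=0$), where $x_8<0$ is automatic and $R^-=17$ falls out directly. A parallel enumeration on the tie pattern of $(x_1,\ldots,x_5)$ and on the sign of $x_5$ determines the multiplicity $m$ of $\min_{c\in A}c(x)=-\tau(x)$: one finds $m\in\{1,2,3,4,5,8\}$, with $m=8$ occurring precisely when $x_2=x_3=x_4=x_5=0$ (so that $\tau(x)=x_1$).

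The crucial compatibility step is, for each triple $(G,m,\varepsilon)$, to check whether $x_6+2x_8=-\tau(x)$ (needed for $m>0$) can be realised simultaneously with $x_6\le -x_1$ and $x_8\le 0$. Two observations drive the final count: if $\tau(x)=x_1$, then compatibility forces $x_6=-x_1$ and $x_8=0$, giving $(G,m,\varepsilon)=(1,8,0)$ and $R^-=17$; and in Case~B2 with $\tau(x)>x_1$, compatibility forces $x_8<0$, so $\varepsilon=1$. Tabulating all realisable triples and substituting into the identity above yields exactly the set $\{17,21,22,23,24,25,26,27\}$. I would then exhibit explicit witnesses: for instance $R^-=17$ via $x=(a,a,a,a,a,-a,a,-a)$ with $a>0$ (Case~B2 with $G=m=5$); $R^-=21$ via $x_1=x_2=x_3=x_4=-x_5=a$, $x_6=-a$, $x_8=-2a$; and $R^-=27$ via any $x$ with $x_6\ll 0$ and $x_8\ll 0$ generic.

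The main obstacle will be ruling out $R^-\in\{18,19,20\}$, equivalently showing that no realisable triple has $G+m-\varepsilon\in\{6,7,8\}$. The exclusion must kill $m=8$ with $\varepsilon=1$ (incompatible, since $m=8$ forces $\tau=x_1$ and hence $\varepsilon=0$), the absent multiplicities $m\in\{6,7\}$, and the mixed triples such as $(5,1,0)$, $(4,2,0)$, $(3,3,1)$, and $(2,4,0)$, each of which violates one of the compatibility constraints of the previous paragraph. Carrying out this elimination exhaustively through the possible tie structures of $(x_1,\ldots,x_5)$ is the most delicate portion of the argument and is what forces the characteristic gap $\{18,19,20\}$ in the final list.
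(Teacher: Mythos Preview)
Your proposal is correct and follows essentially the same strategy as the paper: normalize by $W_1\cong S_5\rtimes A\subset W_K$ so that $x_1\ge x_2\ge x_3\ge x_4\ge|x_5|$, then count which of the $27$ inequalities $\gamma(x)\le 0$ are strict, splitting along the three families $\{\gamma_j^\pm\}$, $\{\beta_c\}$, and $\{e_8-e_7\}$. The paper organizes the case analysis by the signs of $x_6$ and $x_7$ (your $\varepsilon$), whereas you package the same computation into the identity $R^-=26+\varepsilon-G-m$ and then enumerate realizable triples; both routes lead to the same exclusions and the same witnesses. (One notational slip: in ``Case~B2 ($x_6=-x_1>0$)'' you presumably mean $x_1>0$, i.e.\ $x_6=-x_1<0$, and the claim $G=k$ there should be read as $G\in\{1,\dots,5\}$, with $G=5$ arising both from $k=5$ and from the subcase $k=4$, $x_5=-x_1$.)
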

\begin{proof}
    The vanishing of $R^+(\mathfrak q)=0$ implies that for any positive non-compact 
    root $\gamma$, we have $\gamma(x)\le 0$.  

    Using the action of $W_1\cong S_5\rtimes A\subset W_K$, without 
    loss of generality we assume that $x_1\ge x_2\ge x_3 \ge x_4\ge |x_5|$.  So $\sigma(x)
    =\sum_{1\le j\le 5}x_j$ and $\tau(x)=x_1+x_2+x_3+x_4-x_5$.  
    Note that $\tau(x)=\sigma(x)$ if and only if $x_5=0$.
    
    Taking $\gamma=e_8-e_7\in \Delta^+_{\textrm{n}}$, we have $\gamma(x)=x_8-x_7\le 0.$
    Since$-x_8=x_7$, $e_7-e_8\in \mathcal R^-$ if and only if $x_7>0.$
    
    Since $\gamma_j^\pm (x)=x_6\pm x_j\le 0~\forall j\le 5,$ we have $-x_6\ge x_1.$
    If $-x_6>x_1$, then $\mathcal D^-\subset \mathcal R^-.$
    Since $\beta_a(x)=(x_6-2x_7-a(x))/2\le 0~\forall a\in A,$ 
    we have 
   \begin{equation}\label{e7-main-inequality}
     -x_6+2x_7\ge \tau(x)=x_1+x_2+x_3+x_4-x_5.
    \end{equation}
If the inequality is strict, then $\mathcal E^-\subset \mathcal R^-.$    

    {\em Case 1:} \underline{Suppose that $x_6=0.$}  Now, since $-x_6\ge |x_j|,$ we have $x_j=0~\forall~1\leq j\leq 6.$ So none of the roots $-e_6\pm e_j$ with $1\le j\le 5$ is in $\mathcal R^-.$
    Since $x\ne 0$ and $-x_8=x_7\ge 0$, we must have $x_7> 0$.  It follows that 
    $e_7-e_8\in \mathcal R^-$.  Since $\tau(x)=0$, strict inequality holds in Equation $(\ref{e7-main-inequality})$ and so $\mathcal E^-\subset \mathcal R^-.$  Therefore $R^-(\mathfrak q_x)=17$ in this case.

    {\em Case 2:} \underline{Suppose that $x_6<0$ and $x_7=0.$}  Then $x_8=0$ and so $e_8-e_7\notin \mathcal R^-.$   Suppose that $-x_6>\tau(x).$  Then $\mathcal E^-\subset \mathcal R^-$.
    Also $-x_6>\tau(x)\ge x_1$ and so $\mathcal D^-\subset \mathcal R^-.$ Thus $R^-=26.$

    For the rest of the case assume that $-x_6=\tau(x).$ Then 
    $-x_6\ge x_1+x_2$ (note that equality holds {\em only} if $x_5=0$).  
    If $x_2>0$ or if $-x_6>x_1$, 
    then  $\mathcal D^-\subset \mathcal R^-$. 
    Suppose that $x_2=0$.  Then $R^-=17 $ or $26$ according as $-x_6=x_1$ or $-x_6>x_1$.
 
    Suppose that $x_2>0$.  Let $t=|\{j\le 5\mid x_j=|x_5|\}|$.
    If $x_5=0$,   
    then $\beta_c(x)=0$ for $2^{t-1}$ members $c\in A$ and for the 
    rest of the members $c\in A$, we have $-\beta_c\in \mathcal R^-$. 
    If $x_5>0$, then $\tau(x)=-c(x)$ for precisely $t$ distinct members $c$ of $A$ and for the rest of the members $c\in A$, $-\beta_{c}\in\mathcal{R}^{-}$.
    So $|\mathcal E^-\cap \mathcal R^-|=16-t.$
        If $x_5<0$, then $-\tau(x)=c(x)<a(x)~\forall~a\in A,~a\neq c$ with $c=(1,1,1,1,0)$,
    and so $|\mathcal E^-\cap \mathcal R^-|=15.$
 
   Thus summarizing, we get that under the blanket assumption $-x_6=\tau(x)$, $R^-\in\{22,24,25\}$ if $x_5=0$, $R^-\in\{21,22,23,24,25\}$ if $x_5>0$ and finally $R^-=25$ if $x_5<0$.

{\em Case 3:} \underline{Suppose that $x_6<0$ and $x_7>0$.} Then $-e_8+e_7\in \mathcal R^-$. 
    Let $s:=|\{j\le 5\mid -x_6=|x_j|\}|$ so that $|\mathcal D^-\cap \mathcal R^-|=10-s.$ 
    Suppose that $-x_6+2x_7>\tau(x),$ then $\mathcal E^-\subset \mathcal R^-.$ Thus $R^-\in\{22,23,24,25,26,27\}$. 
    
    For the rest of the case assume that $-x_6+2x_7=\tau(x).$
    Let $t:=|\{j\le 5\mid x_j=0\}|$. Then $|\mathcal E^-\cap \mathcal R^-|=16-2^{t-1}$ if $t>0$.
    Since $\tau(x)>-x_6>0,$ we have $t\le 4$.  We claim that $t\le 3$. For, if $t=4,$ then $-x_6+2x_7=x_1$ which implies that $x_6+x_1=2x_7>0$ and so $e_6+e_1\in \mathcal R^+.$ This contradicts 
     our hypothesis that $R^+=0$. 

We need to consider three subcases: 

    {\em Subcase $(i)$$:$} \underline{Suppose that $x_5< 0$}.  Then $t=0$ 
    and so  $|\mathcal E^-\cap \mathcal R^-|= 15.$ Since $|\mathcal D^-\cap\mathcal R^-|=10-s$ and 
    $e_7-e_8\in \mathcal R^-$ we conclude that $R^-\in\{21,22,23,24,25,26\}.$

    {\em Subcase $(ii)$$:$}
    \underline{Suppose that $x_5=0$}.  Then $1\le t\leq 3$ and $|\mathcal E^-\cap \mathcal R^-|=16-2^{t-1}.$  
    In this case $R^-=27-2^{t-1}-s$. Note that $s+t\le 5.$ So $R\in \{21,22, 23,24,25,26\}.$

    {\em Subcase $(iii)$$:$}  \underline{Suppose that $x_5>0$}. We have 
    $\tau(x)=x_1+x_2+x_3+x_4-x_5.$   As $-x_6+2x_7=\tau(x)$,  
    $|\mathcal E^-\cap \mathcal R^-|=16-r$ where $ r=|\{j\le 5\mid x_j=x_5\}|$.  
    As before,  set $s:=|\{j\le 5\mid -x_6=x_j\}|$ so that $|\mathcal D^-\cap \mathcal R^-|=10-s.$  
    Note that if $1\le r<5$, then $-x_6>x_5$ 
     and hence $0\le s\le 5-r.$    
    If $r=5$, then $s=5$ or $0$ according as $-x_6=x_1=x_5$ or $-x_6>x_1=x_5$. Therefore
    $1\le r+s\le 5$ or $r+s=10$. 
    It follows that $R^-=27-(r+s)$ and the possible values 
    of $R^-$ are $17, 22, 23,24, 25,26.$ Also it is clear that $R^-$ attains each of the values $17,21,22,23,24,25,26,27$ for suitable choices of non-zero $x\in\mathfrak{t}_{\mathbb{R}}$. 
This completes the proof.
\end{proof}
\begin{lemma}\label{e7-hodge1}
Suppose that
$R^+(\mathfrak q_x)=1$ where $x\in \mathfrak t_\mathbb R$ and $x\ne 0.$ Then:  
\[R^-(\mathfrak q_x)\in\{10,18,21,22,23,24,25,26\}.\] 
Moreover each of these values are attained for suitable choices of non-zero $x\in\mathfrak{t}_{\mathbb{R}}$.
\end{lemma}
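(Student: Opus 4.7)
The strategy parallels the preceding lemma for $R^+(\mathfrak q_x)=0$. First, using the $W_1 = S_5 \rtimes A$ action, normalize $x$ so that $x_1 \ge x_2 \ge x_3 \ge x_4 \ge |x_5|$. Under this normalization, among the $27$ positive non-compact roots, the largest value in each of the three families is attained at $\gamma_1^+ = e_6 + e_1$ (value $x_6 + x_1$), at $\beta_{c_0}$ with $c_0 = (1,1,1,1,0)$ (value $(x_6 - 2x_7 + \tau(x))/2$), and at $e_8 - e_7$ (value $-2x_7$). Since $R^+(\mathfrak q_x) = 1$, the unique positive non-compact root must be one of these three candidates, and it must be the unique maximizer in its family.

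The analysis then splits into three principal cases. In \emph{Case A}, $e_8 - e_7$ is the unique positive root, so $x_7 < 0$, $x_6 + x_1 \le 0$, and $-x_6 + 2x_7 \ge \tau(x)$; one subdivides further based on which of these inequalities are strict and on the coincidence pattern among $x_1, \ldots, x_5$. In \emph{Case B}, $\gamma_1^+$ is the unique positive root; then $x_1 > x_2$, $x_6 > -x_1$, $x_7 \ge 0$, $x_6 \pm x_j \le 0$ for $j \ge 2$, and $\beta_c(x) \le 0$ for all $c \in A$. In \emph{Case C}, $\beta_{c_0}$ is the unique positive root, forcing $\beta_c(x) \le 0$ for $c \ne c_0$, $x_6 + x_1 \le 0$, and $x_7 \ge 0$. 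Within each case, sub-cases are organized by the multiplicities among $x_2, x_3, x_4, |x_5|$ and by the equality patterns $x_6 = -x_j$ and $x_6 - 2x_7 = -c(x)$.

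For each configuration, I would count $k := |\{\gamma \in \Delta_{\textrm{n}}^+ \mid \gamma(x) = 0\}|$, so that $R^-(\mathfrak q_x) = 26 - k$. The enumeration then produces exactly the set $\{10, 18, 21, 22, 23, 24, 25, 26\}$. The extreme value $R^- = 10$ arises in Case A from $x_1 = \cdots = x_5 = 0$ and $x_6 = 2x_7 < 0$: all sixteen roots $\beta_c$ vanish at $x$ while all ten roots $\gamma_j^{\pm}$ have value $x_6 < 0$ and so contribute to $\mathcal R^-$. For each value of $R^-$ in the stated list, an explicit $x \in \mathfrak t_\mathbb R$ realizing it will be exhibited, guided by the sub-cases.

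The main technical obstacle lies in Case C and in the degenerate sub-cases of Case A. The equation $\beta_c(x) = 0$ defines an affine hyperplane in $\mathfrak t_\mathbb R$, and the intersections of several such hyperplanes with each other and with the ordering constraints can produce simultaneous vanishing of many $\beta_c$'s, causing $k$ to jump from small values up to $8$ and $16$. Careful identification of which coincidence patterns are compatible with $R^+(\mathfrak q_x) = 1$, together with the verification that no intermediate values of $R^-$ (such as $11$--$17$, $19$, $20$) occur, will require the patient case analysis already deployed in Lemma \ref{e7-hodge0}.
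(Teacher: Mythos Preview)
Your proposal is correct and follows essentially the same approach as the paper: a case split according to which of the maximal positive non-compact roots $e_8-e_7$, $\gamma_1^+$, or $\beta_{c_0}$ lies in $\mathcal R^+$ (your Cases A, B, C are the paper's Cases 1, 2, 4, and your observation that $\gamma_1^+$ dominates the $\mathcal D$-family absorbs the paper's Case 3), followed by sub-case enumeration governed by the equality patterns among the $x_j$. The reformulation $R^-=26-k$ with $k$ the number of vanishing positive non-compact roots is a convenient repackaging of the paper's direct counting of $|\mathcal D^-\cap\mathcal R^-|$ and $|\mathcal E^-\cap\mathcal R^-|$, but the substance of the argument is the same.
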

\begin{proof}  
 We may (and do) assume that $x_1\ge x_2\ge x_3\ge x_4\ge |x_5|$.
    
    {\em Case 1:} \underline{Suppose that $e_8-e_7\in \mathcal R^+.$}  Then $x_8=-x_7>0$  
    and $e_7-e_8\notin \mathcal R^-.$
    Since $\beta_c\notin \mathcal R^+ ~\forall c\in A,$
     Equation $(\ref{e7-main-inequality})$ holds.
         Since $x_7<0$ we have $-x_6>\tau(x)\ge x_1.$ Therefore $\mathcal D^-\subset \mathcal R^-.$
    If $-x_6+2x_7>\tau(x)$, then $\mathcal E^-\subset \mathcal R^-$ and so
$R^-=26.$  

Suppose that $-x_6+2x_7=\tau(x).$   If $\tau(x)=0$, then $x_1=0$ which implies that $x_j=0$ for all $j\le 5.$
So $|\mathcal E^-\cap\mathcal R^-|=0$ and $\mathcal D^-\subset \mathcal R^-.$ It follows that  $R^-=10$.

Suppose that $-x_6+2x_7=\tau(x)>0.$ As noted already, $\mathcal D^-\subset \mathcal R^-$,  the value of $|\mathcal E^-\cap \mathcal R^-|$ depends on the sign of $x_5$. Thus there are three cases to consider:

{\em Subcase $(i)$$:$} \underline{Suppose that $x_5<0$}.  Then there is a unique $c\in A$ such that $\tau(x)=-c(x)$. So 
$-\beta_{c}(x)=0$ and if $a\in A$ with $a\neq c$, $-\beta_a\in \mathcal R^-.$  So $ R^-=25.$

{\em Subcase $(ii)$$:$} \underline{Suppose that $x_5=0$}.   Let $t=|\{ j\le 5\mid x_j=0\}|$. 
Then $|\mathcal E^-\cap \mathcal R^-|=16-2^{t-1}$.   
Since $\tau(x)>0$ we have $t\le 4$. Thus $R^-\in \{18, 22, 24,25\}.$

{\it Subcase $(iii)$$:$}  \underline{Suppose that $x_5>0$}.  Let $r=|\{1\leq j\le 5\mid x_j=x_5\}|$.  There are 
$r$ distinct elements $c\in A$ such that $ \tau(x)=-c(x)$ and we have $|\mathcal E^-\cap\mathcal R^-|=16-r.$ 
Therefore $R^-=26-r$, $1\leq r\leq 5$. Thus $R^{-}\in\{21,22,23,24,25\}$.

{\em Case 2:} \underline{Suppose that $e_6+e_i\in  \mathcal R^+$ for some $i$.} 
Since $x_6+x_1\ge x_6+x_i>0$ and since $R^+=1$, we must have $i=1$.  Moreover $x_6\pm x_j\le 0$ 
if $1<j\le 5.$  So
$x_1>-x_6\ge x_2\ge 0$. It follows that $-(e_6-e_1)\in\mathcal R^-.$  
Let $s=|\{2\le j\le 5\mid -x_6=|x_j|\}|$.
Then $|\mathcal D^-\cap \mathcal R^-|=9-s$ if $0\leq s\leq 3$. if $s=4$, then $|\mathcal D^-\cap \mathcal R^-|=5$ or $1$ according as $x_5\ne 0$ or $x_5=0$ respectively.  
Also $x_7=-x_8\ge 0$ as $e_8-e_7\notin \mathcal R^+$.  

Since $\mathcal E^-\cap \mathcal R^+=\emptyset$
we have $-x_6+2x_7\ge \tau(x).$ If the inequality is strict, we have $\mathcal E^-\subset 
\mathcal R^-.$ 
Also note that if $-x_6+2x_7>\tau(x)$, then $x_7>0$ and so $e_7-e_8\in \mathcal R^-$.   Thus $R^-\in\{18,22,23,24,25,26\}$ when $ -x_6+2x_7>\tau(x)$.

Now assume that $-x_6+2x_7=\tau(x)$. Thus 
$2x_7=(x_6+x_1)+x_2+x_3+x_4-x_5\ge x_6+x_1>0.$
In particular $x_7>0$ and so $e_7-e_8\in 
\mathcal R^-$. Again the value of $|\mathcal E^-\cap \mathcal R^-|$ depends on the sign of $x_5$ and hence there are three cases to consider:

{\em Subcase $(i)$}: \underline{Suppose $x_5<0$}.   Then $|\mathcal E^-\cap \mathcal R^-|=15$ and hence $R^-\in\{21,22,23,24,25\}$. 

{\em Subcase $(ii)$}:  \underline{Suppose $x_5=0$}.  Let $t:=|\{2\le j\le 5\mid x_j=0\}|$. Then $1\le t\le 4$ and there are $2^{t-1}$ distinct elements $c\in A$ such that $-c(x)=\tau(x)$. 
For each such $c$, we have the equality $-x_6+2x_7=-c(x)$ and so $\beta_c(x)=0.$   
For the remaining elements $a\in A,$ we have 
$-\beta_a(x)>0$ and so $|\mathcal E^-\cap \mathcal R^-|=16-2^{t-1}.$  
Hence $R^-\in\{26-2^{t-1}-s\mid 1\leq t\leq 4,~0\leq s\leq 4-t\}$ if $x_6\neq 0$. If $x_6=0,$ then 
$x_2=0$, $t=s=4$, and $|\mathcal D^-\cap\mathcal R^-|=1$, which implies that $R^-=10$. Therefore $R^-\in \{10,18,21,22,23,24,25\}.$

{\em Subcase $(iii)$}: \underline{Suppose $x_5>0$}.  Let $r=|\{1\leq j\le 5\mid x_j=x_5\}|.$  Then $1\le r\le 4$ and we have $\tau(x)=-c(x)$
for $r$ distinct elements $c\in A$. For the remaining $a\in A, -\beta_a\in \mathcal R^-$. 
Thus $|\mathcal E^-\cap \mathcal R^-|=16-r.$
If $r=4,$ then $s=0$ or $4$ according as $-x_6>x_2$ or $-x_6=x_2$ respectively.  
Consequently, 
$|\mathcal D^-\cap \mathcal{R}^-|$ equals $9$ or $5$ according as $(r,s)=(4,0)$ or $(4,4)$ respectively. If $1\leq r\leq 3,$ then $R^-\in\{26-(r+s)\mid 0\leq s\leq 4-r, 1\leq r\leq 3\}$. Therefore we have  
$R^-\in \{18,22,23,24,25\}$.  

{\em Case 3:} \underline{Suppose that $\gamma_j^-\in \mathcal R^+.$}  
Since $R^+=1$ and $0<x_6-x_j\le x_6-|x_5|$, we must have $j=5$ and $x_5<x_6\le -x_5$. Now $x_6+x_1\ge x_6-x_5>0$ implies that $e_6+e_j\in \mathcal R^+$ for $1\le j\le 4$ a contradiction.
So, this case does not occur at all.

{\em Case 4:}  \underline{Suppose that $\beta_b\in \mathcal R^+$ for some $b\in A.$}
Then $-\tau(x)=b(x)<x_6-2x_7\leq a(x)$ for all $a\in A$ with $a\neq b$ where $b=(1,1,1,1,0)$. This implies that $x_4>x_5$. 
Setting $k:=|\{1\leq j\leq 4 \mid x_j=x_4\}|$,  we have $1\leq k\leq 4$.
Fix $c:=(1,1,1,0,1)\in A$ so that $c(x)=min_{a\in A,a\neq b}a(x)$.

If $x_6-2x_7<c(x)$, then $|\mathcal{E}^{-}\cap \mathcal{R}^{-}|=15$, while assuming that $x_6-2x_7=c(x)$, if either $x_5\geq 0$ or $x_4>-x_5>0$, then $|\mathcal{E}^{-}\cap \mathcal{R}^{-}|=15-k$, whereas if $x_4=-x_5>0,$ it is straightforward to see that $|\mathcal{E}^{-}\cap \mathcal{R}^{-}|=14,12,9,5$ according as $k=1,2,3,4$ respectively.

Since $e_6\pm e_j\notin \mathcal R^+$ we have $-x_6\ge |x_j|$ for $1\le j\le 5$.
Set $s:=|\{1\leq j\leq 5\mid -x_6=|x_j|\}|$. 
If $0\leq s\leq 4$, then $|\mathcal{D}^{-}\cap\mathcal{R}^{-}|=10-s$.  If $s=5$, then $|\mathcal D^-\cap \mathcal R^-|=5$ or $0$ according as $x_5\ne 0$ or $x_5=0$. 

Since $e_8-e_7\notin \mathcal R^-$ and since $x_7=-x_8$, we have 
$x_7\ge 0$.  So $e_7-e_8\in \mathcal R^-$ if and only  
We observe that $x_7>0$ if $x_i=0$ for $1\le i\le 6$. The values of $k$, $s$ are not independent of each other. Also 
since $\beta_c(x)\ge 0$, depending on the specific values 
of $k,s,$ we are led to the conclusion that $x_7$ is positive or that $x_7$ vanishes.  
The following subcases cover all the possibilities.

{\it Subcase $(i)$$:$} \underline {Assume that $\beta_c(x)=0$ and that $x_4=-x_5>0$}. 
Let $k=1$. Then $0\leq s\leq 3$. If $s=0$, then $R^{-}=24$ or $25$ according as $x_7=0$ or $x_7>0$ respectively.
If $1\leq s\leq 3,$ then $x_7>0$ and $R^-=24,23,22$ according as $s=1,2,3$ respectively.

When $k=2$, we have $0\leq s\leq 2$. If $s=0$, then $R^{-}=22$ or $23$ according as $x_7=0$ or $x_7>0$ respectively.
If $1\leq s\leq 2,$ then $x_7>0$ and $R^-=22,21$ according as $s=1,2$ respectively.

When $k=3$, we have $s=1$ in view of the fact that $x_7\geq 0$.  Now we must have $x_7=0$ and consequently $R^-=18$.

When $k=4$, we have $s=5$ using $x_7\geq 0$.  In turn, this implies that $x_7=0$ 
and so 
$R^- = 10.$ 

{\it Subcase $(ii)$$:$} \underline{Assume that $\beta_c(x)=0$, and, either $x_5\geq 0$ or $x_4>-x_5>0$}. 
Then, $R^-\in \{21+j\mid 0\le j\le 5-k\}$ for $1\le k\le 3$.
If $k=4$, then either $s=0$  or $s=4$. If $(k,s)=(4,4)$, then $x_7>0$. Thus, when $k=4$, $R^-\in\{ 18,21,22\}$. 

So we see that, in this subcase, $R^-\in\{18,21,22,23,24,25\}$. 

{\it Subcase $(iii)$$:$} \underline{Assume that $x_6-2x_7<c(x)$}. Then it is already observed before that $|\mathcal{E}^-\cap \mathcal{R}^-|=15$. We simply note that the case $s=5$ and $x_5=0$ does not occur, while if $s=5$ and $x_5\neq 0$, then $x_7>0,$ forcing $R^-=21$.  Thus $R^-\in\{21,22,23,24,25,26\}$.

Finally, it is evident from the above arguments, that $R^-$ attains  each of the values $\{10,18,21,22,23,24,25,26\}$
for suitable choices of non-zero $x\in \mathfrak t_{\mathbb R}$.
This completes the proof.
\end{proof}
\subsection{Summary of results for the exceptional types:}
Below we tabulate the values of $R^-$ (each of which is realized for some non-zero  $x\in\mathfrak{t}_{\mathbb{R}}$) when $R^+=0,1$, for $G$ corresponding to {\bf EIII} and {\bf EVII}:
\[ \begin{array}{|c|c|c|}
\hline 
\mathrm{Type~of~} G& R^+=0, R^-(\mathfrak q_x)  & R^+=1, R^-(\mathfrak q_x)\\
\hline\hline
{\bf EIII}& 8,11,12,13,14,15,16& 5,9,11, 12,13,14,15\\
\mathrm{Ref:} & \mathrm{Lemma~ (\ref{e6-hodge0q})} & \mathrm{Lemma~} (\ref{e6-hodge1q})\\
\hline 
{\bf EVII} & 17, 21, 22,23,24,25,26, 27& 10,18,21,22,23,24,25,26\\
\mathrm{Ref}:& \mathrm{Lemma~} (\ref{e7-hodge0})& \mathrm{Lemma~} (\ref{e7-hodge1})\\
\hline
\end{array}
\]
\begin{center}
    {\bf Table 2:} Values of $R^-(\mathfrak q_x), x\ne 0$, when $R^+=R^+(\mathfrak q_x)=0,1$ for types {\bf EIII} and {\bf EVII}. 
\end{center}
\subsection{Summary of results for the classical types:}
Below we tabulate the values of $R^-$ (each of which is realized for some non-zero $x\in\mathfrak{t}_{\mathbb{R}}$) when $R^+=0,1$, for $G$ corresponding to each classical type:
\[
\begin{array}{|c|c|}
\hline
\mathrm{Type~of~} G &  R^-(\mathfrak q_x)~ {\textrm{when}} ~ R^+(\mathfrak q_x)=0, x\ne 0  \\
\hline\hline
{\bf AIII}~ & \\
SU(1,n), n>1 & [1,n]\cap \mathbb N.\\
\hline
{\bf AIII}~ & \\
SU(m,n),2\le m\le n & rn+(m-r)t, 0\le r\le m-1, 0\le t\le n, \\
 & (r,t)\ne (0,0). \\
\hline
   {\bf BDI}~  &  m-1, m, \cdots, 2m-2.\\
   SO_0(2,2m-2),m\ge 3&\\
 \hline
 {\bf BDI}~ & m, m+1, \cdots, 2m-1.\\
 SO_0(2,2m-1),m\ge 2&\\
 \hline
 {\bf CI}~ &  r(2n-r+1)/2, 1\le r\le n.\\
 Sp(n,\mathbb R),n\ge 2&\\
 \hline 
 {\bf DIII}~&  (s-1)(2n-s)/2,1\le s\le n;\\
 SO^*(2n), n\ge 4& (t-1)+(n-1)(n-2)/2\mid 1\le t\le n-1.\\
 \hline
\end{array}
\]
\begin{center}
    {\bf Table 3:} Values of $R^-(\mathfrak q_x),x\ne 0,$ when $R^+(\mathfrak q_x)=0$ in the classical types. 
\end{center}
\[
\begin{array}{|c|c|}
\hline
\mathrm{Type~of~} G & R^-(\mathfrak q_x)~
{\textrm{when}} ~ R^+(\mathfrak q_x)=1, x\ne 0\\
\hline\hline
 {\bf AIII}& \\
 SU(1,n) & [0,n-1]\cap \mathbb N.\\
 \hline
{\bf AIII}& \\
SU(m,n),2\le m\le n & mn+r-s,\\
& 1\le r\le m-1, m+2\le s\le m+n.\\
\hline
   {\bf BDI}  &1, [m-1,2m-3]\cap \mathbb N. \\
   SO_0(2,2m-2,m\ge 4)& \\
 \hline
 {\bf BDI} &1, [m, 2m-2]\cap \mathbb N. \\
 ~SO_0(2,2m-1),m\ge 2&\\
 \hline
 {\bf CI}~&  s+n(n-1)/2,\\
 Sp(n,\mathbb R), n\ge 3&  0\le s<n.\\
 \hline 
 {\bf DIII}~ & (s-1)+(n-1)(n-2)/2,\\ 
 SO^*(2n), n\ge 4&2(s-1)+(n-2)(n-3)/2, ~(n-2)(n+1)/2 \\
 & 1\le s\le n-2.\\
 \hline
\end{array}
\]
\begin{center}
{\bf Table 4:} Values of $R^-(\mathfrak q_{x})$, $x\neq 0$, when $R^+(\mathfrak q_{x})=1$ in the 
classical types.
\end{center}

We have the following vanishing theorem, whose proof follows from the 
Matsushima isomorphism, thanks to Table 2, Table 3 and Table 4. (See Remark $(\ref{vanishingcohomology}))$.

\begin{theorem}\label{vanishing}
Suppose that $G$ is simple so that $X=G/K$ is an irreducible Hermitian symmetric space. Let $\Gamma \subset G$ is a torsion free and uniform lattice. Then: \\
$(i)$ Let $q>0$.  $H^{0,q}(X_\Gamma)=0$ if $q$ does \underline{not} occur in row of type $G$ 
of Table 3  when $G$ is of classical type, and, in row of type $G$, column $R^+=0$ of Table 2 when $G$ is of exceptional type. \\
$(ii)$ Let $q>1.$  Then $H^{1,q}(X_\Gamma)=0$ 
if $q-1$ does \underline{not} occur  
in row of type $G$ of Table 3 and $q$ does \underline{not} 
occur in row of type $G$ of Table 4, when $G$ is of classical type;  if $q-1$ does \underline{not} occur  in row of type $G$, 
column $R^+=0$ and $q$ does \underline{not} occur in row of type $G$, column $R^+=1$
of Table 2, when G is of exceptional type. \hfill $\Box$
\end{theorem}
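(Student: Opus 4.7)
The plan is to deduce Theorem~\ref{vanishing} mechanically from the machinery already assembled, in particular from the Hodge-refined Matsushima isomorphism, the classification of cohomological representations via $\theta$-stable parabolics, and the case-by-case tables summarizing the realizable values of $(R^+(\mathfrak{q}), R^-(\mathfrak{q}))$.

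The first step is to apply \eqref{hodgematsushima} to reduce the vanishing of $H^{p,q}(X_\Gamma)$ to the vanishing of $H^{p,q}(\mathfrak{g}_0, K, V^\infty_{\pi,K})$ for every $\pi \in \widehat{G}$. By the Vogan--Zuckerman classification, any cohomological representation is of the form $\mathcal{A}_\mathfrak{q}$ for some $\theta$-stable parabolic $\mathfrak{q}$ of $\mathfrak{g}_0$; since such $\mathfrak{q}$ may be taken to contain $\mathfrak{t}$ after $K$-conjugation, and since the integers $R^\pm(\mathfrak{q}_x)$ depend only on the $W_K$-orbit of $x \in \mathfrak{t}_{\mathbb{R}}$, it suffices to rule out, for every $x \in \mathfrak{t}_{\mathbb{R}}$, the non-vanishing of $H^{p,q}(\mathfrak{g}_0, K, A_{\mathfrak{q}_x})$.

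The second step applies \eqref{gk-cohomology-hermitiansymmetricspace} to identify this group with $H^{p-R^+(\mathfrak{q}),\, q-R^-(\mathfrak{q})}(Y_\mathfrak{q})$, where $Y_\mathfrak{q}$ is a generalized complex flag manifold. Since the Dolbeault cohomology of any such manifold is concentrated in diagonal bidegrees $(r,r)$ with $r \ge 0$, non-vanishing forces $p - R^+(\mathfrak{q}) = q - R^-(\mathfrak{q}) \ge 0$. Taking $p = 0$ yields $R^+(\mathfrak{q}) = 0$ and $R^-(\mathfrak{q}) = q$, while taking $p = 1$ (with $q > 1$) yields either $(R^+, R^-) = (1, q)$ or $(R^+, R^-) = (0, q-1)$. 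The trivial case $x = 0$ (giving $\mathfrak{q} = \mathfrak{g}$ and $R^\pm = 0$) contributes only to $H^{0,0}$ and plays no role here. This is precisely the content of Remark~\ref{vanishingcohomology}.

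The last step is pure bookkeeping: invoke Tables~2, 3, and 4, each of which enumerates the values of $R^-(\mathfrak{q}_x)$ realized by some non-zero $x \in \mathfrak{t}_{\mathbb{R}}$ when $R^+(\mathfrak{q}_x) \in \{0, 1\}$, in each Cartan type {\bf AIII}, {\bf BDI}, {\bf CI}, {\bf DIII}, {\bf EIII}, {\bf EVII}. Part~(i) follows because ``$H^{0,q}(X_\Gamma) \ne 0$'' requires $q$ to appear in the $R^+=0$ list; part~(ii) follows because ``$H^{1,q}(X_\Gamma) \ne 0$'' requires either $q-1$ to appear in the $R^+=0$ list or $q$ to appear in the $R^+=1$ list. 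There is no genuine obstacle at this final stage---the real work has already been absorbed into the detailed case analyses of Sections~3.1--3.7 which populate the tables; the theorem itself is merely their joint corollary via the Matsushima reduction.
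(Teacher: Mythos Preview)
Your proposal is correct and follows exactly the paper's approach: the theorem is deduced from the Hodge-refined Matsushima isomorphism \eqref{hodgematsushima}, the identification \eqref{gk-cohomology-hermitiansymmetricspace}, and the case-by-case enumeration of realizable $(R^+,R^-)$ values in Tables~2--4, all packaged via Remark~\ref{vanishingcohomology}. One minor imprecision: the trivial representation ($x=0$) contributes to every diagonal $H^{r,r}$, not only $H^{0,0}$, but your conclusion that it plays no role for $(0,q)$ with $q>0$ and $(1,q)$ with $q>1$ is nonetheless correct.
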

Note that as the vanishing of $H^{0,p}(X_{\Gamma})$ for $p=1,2$ implies that $Pic(X_{\Gamma})\cong H^{2}(X_{\Gamma},\mathbb{Z})$, one reads off from Theorem $(\ref{vanishing})$ the cases for the occurence of this phenomenon, but when $r_{\mathbb{R}}(G)\geq 3$, then as $\Gamma$ is already irreducible, $G$ being simple, Theorem $(\ref{picard-xgamma})$ already shows that $Pic(X_{\Gamma})\cong H^{2}(X_{\Gamma},\mathbb{Z})$.  
In the theorem below, we summarize certain cases when rank of $Pic(X_{\Gamma})=1$. 
The proof follows readily from Remark $(\ref{vanishingcohomology})(iii)$, Theorem $(\ref{vanishing})(i)$, Table 4 and column $R^{+}=1$ in Table 2. We have omitted the type {\bf BDI} since in this case there do exist 
$\theta$-stable parabolic subalgebras $\mathfrak q$  of $\mathfrak g_0$ 
with $R^+(\mathfrak q)=R^-(\mathfrak q)=1$.  
\begin{theorem}\label{hodge-1-1} We have $H^{1,1}(X_\Gamma)\cong \mathbb C$ and the rank of $Pic(X_\Gamma)$ equals $1$ $($resp. $Pic(X_\Gamma)\cong H^2(X_\Gamma,\mathbb Z)$$)$ in the following cases.   
{\bf AIII:} $SU(m,n)$, $(m,n)\ne (2,2), 2\leq m\leq n$ $($resp. $n\ge m\ge 3$$)$;
{\bf CI:} $Sp(n,\mathbb R)$, $n\ge 3$, $($resp. $n\ge 4$$)$;
{\bf DIII:} $SO^*(2n)$, $n\ge 5$ $($resp. $n\ge 4$$)$;
{\bf EIII} and {\bf  EVII:} $H^{1,1}(X_\Gamma)\cong \mathbb C$ and $Pic(X_\Gamma)\cong H^2(X_\Gamma,\mathbb Z),$ which is of rank $1$. \hfill $\Box$
\end{theorem}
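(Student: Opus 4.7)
The plan is to apply the vanishing criteria collected in Remark \ref{vanishingcohomology}, combined with the complete lists of achievable pairs $(R^+(\mathfrak{q}), R^-(\mathfrak{q}))$ given in Tables 2, 3 and 4, and then to translate the resulting Hodge number vanishings into statements about $Pic(X_\Gamma)$ via the exponential exact sequence $(\ref{exp-exact-seq})$.

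To establish $H^{1,1}(X_\Gamma) \cong \mathbb{C}$ in each listed case, I invoke Remark \ref{vanishingcohomology}(iii), which requires that no $\theta$-stable parabolic subalgebra $\mathfrak{q} \supseteq \mathfrak{t}$ realize $(R^+, R^-) = (1,1)$, and inspect the column $R^+ = 1$ of the relevant table. For type \textbf{AIII} with $2\le m\le n$, Table 4 gives $R^- = mn + r - s$ with $1 \le r \le m-1$ and $m+2 \le s \le m+n$; imposing $R^- = 1$ forces $s - r = mn - 1 \le m+n-1$, i.e., $(m-1)(n-1) \le 1$, which is excluded by $(m,n) \ne (2,2)$. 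For type \textbf{CI} with $n \ge 3$, every entry of Table 4 is bounded below by $n(n-1)/2 \ge 3$. For type \textbf{DIII} with $n \ge 5$, each of the three families $(s-1)+(n-1)(n-2)/2$, $2(s-1)+(n-2)(n-3)/2$ and $(n-2)(n+1)/2$ exceeds $1$. For \textbf{EIII} and \textbf{EVII}, the $R^+ = 1$ column of Table 2 shows $R^- \ge 5$ and $R^- \ge 10$ respectively. The exclusion of the boundary cases $SU(2,2)$, $Sp(2,\mathbb{R})$, $SO^*(8)$ and of all of type \textbf{BDI} is genuine, since a parabolic with $(R^+, R^-) = (1,1)$ exists in each.

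To upgrade this to $\rank Pic(X_\Gamma) = 1$, I apply Remark \ref{vanishingcohomology}(i) with $p = 1$ to obtain $H^{0,1}(X_\Gamma) = 0$, verifying from Table 3 and the $R^+ = 0$ column of Table 2 that $R^- = 1$ is never achieved when $R^+ = 0$. The exponential sequence $(\ref{exp-exact-seq})$ then forces $Pic^0(X_\Gamma) = 0$ and yields an injection $c_1 \colon Pic(X_\Gamma) \hookrightarrow H^2(X_\Gamma, \mathbb{Z})$. Tensoring with $\mathbb{Q}$, the image of $c_1$ lies in $H^{1,1}(X_\Gamma) \cap H^2(X_\Gamma, \mathbb{Q})$, which is at most $1$-dimensional by the previous paragraph and contains the class of an ample line bundle since $X_\Gamma$ is projective; hence $\rank Pic(X_\Gamma) = 1$.

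For the parenthetical strengthening to $Pic(X_\Gamma) \cong H^2(X_\Gamma, \mathbb{Z})$, I further apply Remark \ref{vanishingcohomology}(i) with $p = 2$ to deduce $H^{0,2}(X_\Gamma) = 0$, verifying from the same tables that $R^- = 2$ is never realized when $R^+ = 0$ in the stricter parameter ranges quoted in the theorem. With $H^{0,1}(X_\Gamma) = H^{0,2}(X_\Gamma) = 0$, the exponential sequence promotes $c_1$ to an isomorphism $Pic(X_\Gamma) \cong H^2(X_\Gamma, \mathbb{Z})$. The main obstacle is simply the bookkeeping required to verify the arithmetic inequalities on the entries of Tables 2, 3 and 4 in each Cartan type and to correctly identify the boundary cases that must be excluded.
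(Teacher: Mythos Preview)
Your proof is correct and follows essentially the same approach as the paper. The paper's own proof is stated in one sentence, citing Remark~\ref{vanishingcohomology}(iii), Theorem~\ref{vanishing}(i), Table~4 and the $R^+=1$ column of Table~2; your proposal simply unpacks this by carrying out the arithmetic checks on the table entries and spelling out the exponential-sequence argument (including the Lefschetz $(1,1)$ observation and the nontriviality of the ample class) that the paper leaves implicit in the paragraph preceding the theorem.
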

{\bf Acknowledgements:} The research of both the authors was partially supported by the Infosys foundation.

{\bf Declaration of competing interest:} There is no competing interest involved with this work.

\end{document}